\documentclass[11pt, reqno]{amsart}
\usepackage{hyperref}
\usepackage{graphicx}
\usepackage{color}%
\usepackage[numbers, square]{natbib}
\usepackage{mathrsfs}
\usepackage{mathptmx}
\usepackage{amsmath,amsthm,amsfonts,amssymb}
\usepackage{dsfont,extarrows}
\usepackage{enumitem}
\usepackage{fullpage}
\usepackage{xcolor}
\usepackage[normalem]{ulem}

\hypersetup{
  hidelinks,
  pdftitle={Exponential profiles for the intrinsic volumes of Orlicz balls},
  pdfsubject={Exact factorized integral representation, Laplace-transform asymptotics, and exact-mean inversion}
}

\allowdisplaybreaks[4]
\numberwithin{equation}{section}

\theoremstyle{plain} \newtheorem{theorem}{Theorem}[section]
\theoremstyle{plain} \newtheorem{proposition}[theorem]{Proposition}
\theoremstyle{plain} \newtheorem{lemma}[theorem]{Lemma}
\theoremstyle{plain} \newtheorem{corollary}[theorem]{Corollary}
\theoremstyle{remark} \newtheorem{remark}[theorem]{Remark}

\newcommand{\R}{\mathbb R}
\newcommand{\Vol}{\operatorname{Vol}}
\newcommand{\Var}{\operatorname{Var}}
\newcommand{\E}{\mathbb E}
\newcommand{\dd}{\,\mathrm d}
\newcommand{\cH}{\mathcal H}
\newcommand{\PhiA}{\Phi_{\alpha}}
\newcommand{\phiA}{\phi_{\alpha}}
\newcommand{\rate}{\mathcal I_M}

\title[Intrinsic volumes of Orlicz balls]{Exponential profiles for the intrinsic volumes of Orlicz balls}
\author{Mykyta Sadok}
\address{Faculty of Computer Science and Cybernetics, Taras Shevchenko National University of Kyiv, Kyiv, Ukraine}
\email{sadokmykyta@knu.ua}

\begin{document}

\begin{abstract}
An Orlicz ball is the set of points where the sum of a fixed even convex
function of the coordinates does not exceed a given radius, here
proportional to the dimension.  For a broad class of such functions,
including non-homogeneous ones, we find the exponential rate of the
intrinsic volumes whose order is proportional to the dimension and
express it by a two-parameter variational formula involving only
one-dimensional integrals.  As a function of the order-to-dimension
ratio, this rate is concave and continuous on the closed unit interval,
and the convergence is uniform in the order.  This proves the
exponential-profile conjecture of Kabluchko, Marynych, and Prochno for
these bodies and recovers their result for $\ell_p$ balls with $p>1$.
The proof relies on the theory of large deviations.  An exact formula for the Laplace transform of the intrinsic volumes in the radius, extending theirs for $\ell_p$ balls, plays the role of a moment generating
function, and a Laplace method gives its exponential rate.  An exponential change of measure, analogous to that used in Cram\'{e}r's theorem, then recovers the intrinsic volumes. The key step is a lower bound on the tilted density at its mean, obtained from log-concavity rather than a law of large numbers.
\end{abstract}

\keywords{Intrinsic volumes, Orlicz balls, exponential profile, log-concavity, Laplace method, exponential change of measure}

\subjclass[2020]{Primary: 52A39; secondary: 52A23, 60F10}

\maketitle

\section{Introduction}

The intrinsic volumes $V_0(K),\ldots,V_n(K)$ of a convex body
$K\subseteq\R^n$ are, up to normalization, the coefficients of the Steiner
polynomial $t\mapsto\Vol(K+tB_2^n)$, where $B_2^n$ is the Euclidean unit
ball.  They include the volume $V_n$, half
the surface area $V_{n-1}$, and a multiple of the mean width $V_1$, and by
Kubota's formula $V_j(K)$ is proportional to the average $j$-dimensional
volume of the projections of $K$ onto random $j$-dimensional subspaces
\cite{Schneider}.  For a sequence of convex bodies $K_n\subseteq\R^n$, the
orders $j_n$ with $j_n/n\to\alpha\in(0,1)$ form the bulk of the sequence
$j\mapsto V_j(K_n)$, and in many examples $V_{j_n}(K_n)$ grows or decays
exponentially in $n$ at a rate depending on $\alpha$.  Kabluchko, Marynych,
and Prochno~\cite{KabluchkoMarynychProchno} determined this rate for
$\ell_p$ balls and conjectured that every sufficiently regular sequence of
convex bodies with convergent $n^{-1}\log\Vol(K_n)$ admits a concave
exponential profile
\begin{equation*}
g(\alpha)=\lim_{n\to\infty}\frac1n
\log V_{\lfloor\alpha n\rfloor}(K_n),
\qquad 0\le\alpha\le1,
\end{equation*}
see \cite[Conjecture~2.19]{KabluchkoMarynychProchno}.

In this paper we determine the exponential profile of the Orlicz balls
\[
K_n(c)=\left\{x\in\R^n:\sum_{i=1}^n M(x_i)\le c\right\},
\qquad c=n\rho,\quad \rho>0.
\]
For $M(s)=|s|^p$, $p>1$, these are the dilated $\ell_p$ balls
$K_n(n\rho)=(n\rho)^{1/p}B_p^n$, where
\[
B_p^n=\left\{x\in\R^n:\sum_{i=1}^n|x_i|^p\le1\right\}
\]
is the unit $\ell_p$ ball.

Our main result, Theorem~\ref{thm:main}, applies to every $M$ satisfying
\textnormal{(M1)--(M2)} below.  Besides $|s|^p$ for all $p>1$, these
assumptions cover non-homogeneous functions such as $|s|^p+|s|^q$,
$\cosh s-1$, $e^{s^2}-1$, and $\sqrt{1+s^2}-1$, as well as convex
functions that are affine outside a bounded interval.  If the orders
$j_n\in\{0,\ldots,n-1\}$ satisfy $j_n/n\to\alpha\in(0,1)$, then
\[
\lim_{n\to\infty}\frac1n\log V_{j_n}(K_n(n\rho))
=H(\alpha)-\frac{1-\alpha}{2}\log\pi
 +\inf_{\lambda>0}\{\lambda\rho+\Phi_\alpha(\lambda)\},
\]
where $H(\alpha)$ is the entropy and $\Phi_\alpha(\lambda)$ is the supremum
over $\theta>0$ of an explicit combination of one-dimensional integrals,
see Section~\ref{sec:variational}.  Both the inner supremum and the outer
infimum have unique optimizers, and the optimal pair $(\theta,\lambda)$
is the unique solution of the two scalar equations
\eqref{eq-optimality-main} stated with the theorem.  We write
$\rate(\alpha,\rho)$ for the right-hand side.

Corollary~\ref{cor:profile} adds the two endpoints.  Every sublinear
sequence of orders has zero exponential rate, and at $\alpha=1$ the same
inversion recovers the exponential volume asymptotics of Orlicz balls.
The corollary also upgrades the convergence to a uniform one over all
orders $j\in\{0,\ldots,n\}$, which establishes the conjectured concave
profile for $K_n(n\rho)$ on all of $[0,1]$.  For $M(s)=|s|^p$,
Corollary~\ref{cor:profile} recovers the exponential profile of
\cite[Theorem~2.17]{KabluchkoMarynychProchno}, uniformly in the order,
and Proposition~\ref{prop:product} recovers the exact formula
\cite[Theorem~2.1]{KabluchkoMarynychProchno}, see
Section~\ref{sec:known-cases}.

The main difference from the $\ell_p$ case is the lack of homogeneity.
We replace it by the Laplace transform
$Q_{n,j}(\lambda)=\int_0^\infty e^{-\lambda c}V_j(K_n(c))\dd c$ in the
radius $c$.  Coarea turns the level-set integrals into an integral over
$\R^n$, and a Gamma identity makes the integrand factorize over the
coordinates (Proposition~\ref{prop:product}).  Endpoint estimates in the
variable of the Gamma identity then identify the exponential rate of
$Q_{n,j_n}(\lambda)$ as
\[
\Lambda_\alpha(\lambda)
=H(\alpha)-\frac{1-\alpha}{2}\log\pi+\Phi_\alpha(\lambda),
\]
and we write $\Lambda=\Lambda_\alpha$ when $\alpha$ is fixed.  The
heuristic behind the inversion is the following.  If
$V_j(K_n(c))\approx e^{n\rate(\alpha,c/n)}$, then
\[
Q_{n,j}(\lambda)
\approx\int_0^\infty e^{\,n(\rate(\alpha,c/n)-\lambda c/n)}\dd c,
\]
so $\Lambda$ should be the Legendre transform
$\sup_{\rho>0}\{\rate(\alpha,\rho)-\lambda\rho\}$ of the profile, and
concavity of the profile in $\rho$ should return
$\rate(\alpha,\rho)=\inf_{\lambda>0}\{\lambda\rho+\Lambda(\lambda)\}$.
We make this exact without a Tauberian argument, by an exponential
change of measure.  Minimizing the finite-$n$ transform objective
$\lambda\rho+n^{-1}\log Q_{n,j}(\lambda)$ selects the tilted probability
density proportional to $e^{-\lambda c}V_j(K_n(c))$ whose mean is
exactly $n\rho$, and log-concavity of $c\mapsto V_j(K_n(c))$, a
consequence of Brunn--Minkowski, bounds this density at its mean from
below by a polynomial factor.  Remark~\ref{rem:radius} proves the two
duality identities.

The main asymptotic tools are thus the exact transform representation,
the Laplace principle, and an exponential change of measure.  Although
the main results concern deterministic geometric quantities, the last
two follow the pattern of large-deviation theory \cite{DemboZeitouni}.
The Laplace principle, the deterministic prototype of Varadhan's
integral lemma, identifies the exponential rate of the $\theta$-integral
in \eqref{eq:product} as the supremum of its exponent, which produces
the variational formula for $\Phi_\alpha$
(Proposition~\ref{prop:transform-rate}).  The inversion
(Proposition~\ref{prop:inversion}) uses the exponential change of measure
from the proof of the lower bound in Cram\'er's theorem, where the tilt
makes the target value typical, with log-concavity in place of the law
of large numbers used there.  In this analogy, $\Lambda$ plays the role
of a limiting scaled cumulant generating function and
$-\rate(\alpha,\cdot)$ that of a rate function, and the two duality
identities correspond to the Legendre duality in the G\"artner--Ellis
theorem, obtained here for the values $V_{j_n}(K_n(n\rho))$ themselves
rather than for probabilities.  Remark~\ref{rem:radius} adds the
corresponding free-energy interpretation.

High-dimensional volume asymptotics and coordinate behavior for Orlicz
balls were studied in \cite{BartheWolff,KabluchkoProchno}.  The classical
Maxwell principle for $\ell_p$ spheres appears in \cite{NaorRomik}, with
extensions to Orlicz balls in
\cite{FruehwirthProchno,JohnstonProchno}.  Related Gibbs-measure methods and
thin-shell concentration for Orlicz balls were developed in
\cite{AlonsoGutierrezProchno}.  For $\ell_p$ balls, exact one-dimensional
integral representations, proportional-order intrinsic-volume asymptotics,
and Maxwell--Poincar\'e--Borel laws for curvature measures were obtained in
\cite{KabluchkoMarynychProchno}.  Remark~\ref{rem:optimality} indicates
how the last of these appears in our variational formula.  The
factorization of Proposition~\ref{prop:product} persists for generalized
Orlicz balls with coordinate-dependent functions $M_1,\ldots,M_n$ as in
\cite{JohnstonProchno}, but the sum over coordinate subsets then no longer
collapses to a binomial coefficient, see Section~\ref{sec-open}.

\section{Setup and main results}\label{sec:framework}

\subsection{Notation and preliminaries}\label{sec:prelim}

Our standard references are Schneider~\cite{Schneider} for convex
geometry, Rockafellar~\cite{Rockafellar} for convex analysis, and Evans
and Gariepy~\cite{EvansGariepy} for the coarea formula.

\emph{General notation.}
$\langle x,y\rangle$ and $\lVert x\rVert$ are the Euclidean inner product
and norm, $B_2^n$ is the closed Euclidean unit ball, $\R_+=[0,\infty)$,
$\Vol$ is Lebesgue measure on $\R^n$, and $\cH^{n-1}$ is
$(n-1)$-dimensional Hausdorff measure, normalized so that on a smooth
hypersurface it is surface area.  For a probability measure $\mu$,
$\E_\mu[f]=\int f\dd\mu$ and
$\Var_\mu(f)=\E_\mu[f^2]-\E_\mu[f]^2$.
For
$0\le r\le N$,
\[
e_r(z_1,\ldots,z_N)
=\sum_{\substack{I\subseteq\{1,\ldots,N\}\\|I|=r}}
\prod_{i\in I}z_i,
\qquad e_0=1,
\]
is the $r$-th elementary symmetric polynomial.

\emph{Convex bodies.}
A \emph{convex body} is a compact convex subset of $\R^n$ with nonempty
interior.  For $y\in K$, the normal cone is
$N_K(y)=\{w:\langle w,z-y\rangle\le0\text{ for every }z\in K\}$, and the
metric projection $\Pi_K(x)$, the unique nearest point of $K$ to $x$, is
characterized by $x-\Pi_K(x)\in N_K(\Pi_K(x))$, see
\cite[Section~1.2]{Schneider}.  If $\partial K$ is a $C^1$ hypersurface
with outward unit normal $\nu$, then the supporting hyperplane at each
boundary point is unique and $N_K(y)=\R_+\nu(y)$, see
\cite[Section~2.2]{Schneider}.  On a relatively open $C^2$ part of
$\partial K$, the shape operator
$S_x=D\nu(x)|_{T_x\partial K}$ (convex-geometry sign convention, no
minus sign) is self-adjoint with nonnegative eigenvalues
$\chi_1,\ldots,\chi_{n-1}$, the \emph{principal curvatures}, see
\cite[Section~2.5]{Schneider}.

\emph{Coarea formula.}
If $F$ is Lipschitz on a bounded open set $U\subseteq\R^n$ and $\psi\ge0$
is measurable on $U$, then
\[
\int_U\psi(x)\lVert\nabla F(x)\rVert\dd x
=\int_\R\int_{U\cap F^{-1}(c)}\psi\dd\cH^{n-1}\dd c,
\]
see \cite[Section~3.4]{EvansGariepy}.

\subsection{Defining functions and Orlicz balls}\label{sec:defining}

Let $M\colon\R\to\R$, the \emph{defining function}, satisfy the
assumptions below.
\begin{enumerate}[label=\textnormal{(M\arabic*)}]
\item $M$ is even and convex, with $M(0)=0$ and $M(s)>0$ for $s\ne0$.
\item $M\in C^1(\R)\cap C^2(\R\setminus\{0\})$.
\end{enumerate}
Under \textnormal{(M1)}, the quotient $M(s)/s$ is nondecreasing and
positive on $(0,\infty)$, by convexity and $M(0)=0$.  Hence
$M(s_1)\le(s_1/s_2)M(s_2)<M(s_2)$ for $0<s_1<s_2$, so $M$ is strictly
increasing on $[0,\infty)$, and for every fixed $a>0$,
\[
M(s)\ge \frac{M(a)}a|s|\qquad(|s|\ge a),
\]
and in particular $M(s)\to\infty$ at least linearly.  Convexity and
\textnormal{(M2)} give $M''\ge0$ away from the origin. No positive lower
bound is assumed, and no particular behavior near the origin is required.
If $M''(0)$ is undefined, as happens for $|s|^p$ with $1<p<2$, we assign
it the value zero.  This convention is immaterial, since $M''$ enters only
through one-dimensional integrals, for which the single point $0$ is
null, and through the curvature formulas of
Section~\ref{sec:factorization}, which are used only off the coordinate
hyperplanes.

The derivative $M'$ is odd, continuous, and nondecreasing, and the
supporting-line inequality $sM'(s)\ge M(s)$ gives $sM'(s)>0$ for $s\ne0$.
Its range is an interval, open or closed, with endpoints $\pm L$, where
\[
L=\lim_{s\to\infty}M'(s)\in(0,\infty].
\]
Strict convexity is not assumed, so $M'$ may be constant on intervals.
For $r\in(-L,L)$, define $\sigma(r)$ to be the midpoint of the compact
interval $\{s\in\R:M'(s)=r\}$.  Then $\sigma$ is odd, nondecreasing, and
agrees with $(M')^{-1}$ whenever the inverse is single-valued.  Since
$M'$ is monotone, at most countably many of its level sets are
nondegenerate intervals.  The corresponding values of $r$ form a null
set, so the choice of $\sigma$ there does not affect the integrals in $r$
below.  The assumptions also allow asymptotically linear defining functions,
for which $L<\infty$, including functions that are affine outside a
bounded interval.

Evenness is part of the classical definition of an Orlicz function, but
it is not essential here, provided it is replaced by the requirement that
$0$ minimizes $M$.  It is used only to write $M'(0)=0$, which then still
holds, to have the symmetric interior $(-L,L)$ of the derivative range,
and to work with a single inverse $v$ below.  When $L<\infty$, the
endpoints may also belong to the range, as happens for affine tails.
Without evenness, the interior of the derivative range is $(-L_-,L_+)$
with $L_\pm\in(0,\infty]$.  The two one-sided inverses of $M$ enter
\eqref{eq:body-sandwich} separately, and symmetric bounds, such as those
in Lemma~\ref{lem:coercive} and Corollary~\ref{cor:profile}, are replaced
by their one-sided versions.
We keep evenness for notational simplicity.

No growth condition on $M''$ at infinity is imposed.  The assumptions
are satisfied by
$M(s)=|s|^p$ with $p>1$ (with a singular second derivative at zero when
$1<p<2$), by $|s|^p+|s|^q$ with $1<p<q$, and by $\cosh s-1$ and
$e^{s^2}-1$.  They also include the asymptotically linear example
$M(s)=\sqrt{1+s^2}-1$ and the Huber-type function with
$M(s)=|s|^3-\tfrac12|s|^4$ for $|s|\le1$ and $M(s)=|s|-\tfrac12$ for
$|s|\ge1$, which is $C^2$, affine for $|s|\ge1$, and not strictly convex.

For $c>0$, write
\[
F_n(x)=\sum_{i=1}^nM(x_i),\qquad K_n(c)=\{x\in\R^n:F_n(x)\le c\}.
\]
Let $v=(M|_{[0,\infty)})^{-1}$.  Monotonicity of $M$ on $[0,\infty)$ gives
\begin{equation}\label{eq:body-sandwich}
v(c/n)B_2^n\subset K_n(c)\subset \sqrt n\,v(c)B_2^n.
\end{equation}
Indeed, the first inclusion follows from
$|x_i|\le\lVert x\rVert\le v(c/n)$, and the second from
$M(x_i)\le c$.  These containments show that $K_n(c)$ is a convex body,
and the linear lower growth of $M$ gives $v(c)\le C(1+c)$.
We use $c$ as the radius parameter and $\rho=c/n$ as its normalized value.

\subsection{Intrinsic volumes}

Intrinsic volumes are defined by the following Steiner formula.
\[
\Vol(K+tB_2^n)=\sum_{j=0}^n\kappa_{n-j}V_j(K)t^{n-j},
\qquad t\ge0,\qquad
\kappa_r=\Vol(B_2^r)=\frac{\pi^{r/2}}{\Gamma(r/2+1)}.
\]
Here
\[
K+tB_2^n=\{x+ty:x\in K,\ y\in B_2^n\}
=\{z:\operatorname{dist}(z,K)\le t\}
\]
is the outer parallel body of $K$ at distance $t$, and $\kappa_0=1$.  This
normalization gives
\[
V_n(K)=\Vol(K),\qquad V_0(K)=1,\qquad
2V_{n-1}(K)=\cH^{n-1}(\partial K).
\]
For convex bodies $K$ and $K'$ we use the following three properties.
The first two, monotonicity and homogeneity, follow from the
corresponding properties of mixed volumes \cite[Section~5.1]{Schneider}.
The third is the Brunn--Minkowski inequality for intrinsic volumes, the
case of the general Brunn--Minkowski theorem for mixed volumes
\cite[Section~7.4]{Schneider} in which all but $j$ arguments are
Euclidean balls.  Here $(1-t)K+tK'$ denotes the set of all $(1-t)x+ty$
with $x\in K$ and $y\in K'$.
\begin{align}
K\subset K'&\Longrightarrow V_j(K)\le V_j(K'),
\nonumber\\
V_j(aK)&=a^jV_j(K),\qquad a>0,
\nonumber\\
V_j((1-t)K+tK')^{1/j}
&\ge(1-t)V_j(K)^{1/j}+tV_j(K')^{1/j},
\quad 1\le j\le n,\quad 0\le t\le1.
\label{eq:intrinsic-BM}
\end{align}

\subsection{Variational exponent}\label{sec:variational}

For $\theta,\lambda\ge0$, define, with values in $(0,\infty]$,
\begin{align*}
Z(\theta,\lambda)&=\int_\R e^{-\lambda M(s)-\theta M'(s)^2}\dd s,\\*
B(\theta,\lambda)&=\int_\R M''(s)e^{-\lambda M(s)-\theta M'(s)^2}\dd s,\\*
A(\theta,\lambda)&=\int_\R M'(s)^2e^{-\lambda M(s)-\theta M'(s)^2}\dd s.
\end{align*}
Lemma~\ref{lem:endpoints} shows that all three are finite for
$\theta,\lambda>0$, and $Z$ also for $\theta=0$.
Values at $\lambda=0$, where $Z$ may be infinite, are needed only in
Lemma~\ref{lem:endpoints} and in the proof of Lemma~\ref{lem:coercive}.
For $\alpha\in(0,1)$, put
\begin{equation}\label{eq:variational-exponent}
\begin{aligned}
\phiA(\theta,\lambda)
&=\frac{1-\alpha}{2}\log\theta
 +\alpha\log Z(\theta,\lambda)
 +(1-\alpha)\log B(\theta,\lambda),\\
\PhiA(\lambda)&=\sup_{\theta>0}\phiA(\theta,\lambda),
\end{aligned}
\end{equation}
and set $H(\alpha)=-\alpha\log\alpha-(1-\alpha)\log(1-\alpha)$, with the
convention $0\log0=0$ at the endpoints.  All logarithms are natural.
Dependence on $M$ is suppressed in the notation.

The parameter $\lambda$ is the variable of the Laplace transform in the
radius, while $\theta$ comes from a Gamma representation of a negative
power of the gradient norm in the curvature formula.  The powers of $Z$ and
$B$ determine the variational exponent.  The function $A$ enters only the exact formula of Section~\ref{sec:factorization}.  The term $H(\alpha)$ counts the choices of curvature
coordinates, and $-\frac{1-\alpha}{2}\log\pi$ comes from the volumes of
Euclidean balls in the normalization of intrinsic volumes.  The derivation
appears in Section~\ref{sec:factorization}.  Increasing $\theta$ decreases
$Z(\theta,\lambda)$ while increasing
$\sqrt\theta\,B(\theta,\lambda)$; the inner optimizer balances these two
effects with weights $\alpha$ and $1-\alpha$.

\subsection{Main result}

\begin{theorem}[Exponential profile]\label{thm:main}
Assume \textnormal{(M1)--(M2)}.  If $j_n\in\{0,\ldots,n-1\}$ and
$j_n/n\to\alpha\in(0,1)$, then, for every fixed $\rho>0$,
\begin{equation}\label{eq:mainrho}
\lim_{n\to\infty}\frac1n\log V_{j_n}\bigl(K_n(n\rho)\bigr)
=H(\alpha)-\frac{1-\alpha}{2}\log\pi
 +\inf_{\lambda>0}\{\lambda\rho+\PhiA(\lambda)\}.
\end{equation}
For every $\lambda>0$, the inner supremum defining $\PhiA(\lambda)$ has a
unique maximizer $\theta_*(\lambda)$, and for every $\rho>0$ the outer
infimum has a unique minimizer $\lambda_\rho$.  The pair
$(\theta,\lambda)=(\theta_*(\lambda_\rho),\lambda_\rho)$ is the unique
solution in $(0,\infty)^2$ of the two equations
\begin{equation}\label{eq-optimality-main}
\begin{gathered}
2\alpha\theta B(\theta,\lambda)=(1-\alpha)\lambda Z(\theta,\lambda),\\
\alpha\,\frac{\int_\R M(s)e^{-\lambda M(s)-\theta M'(s)^2}\dd s}
{Z(\theta,\lambda)}
+(1-\alpha)\,\frac{\int_\R M(s)M''(s)e^{-\lambda M(s)-\theta M'(s)^2}\dd s}
{B(\theta,\lambda)}=\rho.
\end{gathered}
\end{equation}
Setting $\rho=1$ gives the profile of $K_n(n)$.
\end{theorem}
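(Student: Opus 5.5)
The proof follows the route sketched in the introduction, in three stages: an exact transform formula, a Laplace-method rate for it, and an exponential change of measure to invert the transform.

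\emph{Stage 1: exact transform formula.} Set $Q_{n,j}(\lambda)=\int_0^\infty e^{-\lambda c}V_j(K_n(c))\dd c$. On the smooth part of $\partial K_n(c)$, off the coordinate hyperplanes, write $V_j$ as a curvature integral: an integral of $e_{n-1-j}$ of the principal curvatures over $\partial K_n(c)$, divided by $n\kappa_{n-j}$. The level sets of $F_n$ have normal $\nabla F_n/\lVert\nabla F_n\rVert$. Their curvature symmetric functions can be written through the diagonal Hessian $\operatorname{diag}(M''(x_i))$ projected to the tangent space, divided by powers of $\lVert\nabla F_n\rVert$. The coarea formula turns $\int_0^\infty e^{-\lambda c}\int_{F_n=c}(\cdot)\dd\cH^{n-1}\dd c$ into an integral over $\R^n$ against $e^{-\lambda F_n}\lVert\nabla F_n\rVert$. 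A negative power of $\lVert\nabla F_n\rVert^2=\sum M'(x_i)^2$ is then removed by the Gamma identity $r^{-a}=\Gamma(a)^{-1}\int_0^\infty\theta^{a-1}e^{-\theta r}\dd\theta$. After this step the integrand factorizes over coordinates. The result is Proposition~\ref{prop:product}, a one-dimensional $\theta$-integral of $\binom{n}{j}$ times products of $Z$, $B$ and $A$ powers, with prefactors involving $\kappa_{n-j}$ and Gamma functions.

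\emph{Stage 2: exponential rate of the transform.} Next I show that $n^{-1}\log Q_{n,j_n}(\lambda)\to\Lambda_\alpha(\lambda)$ for each $\lambda>0$; this is Proposition~\ref{prop:transform-rate}. The entropy $H(\alpha)$ comes from $\binom{n}{j}$, and $-\frac{1-\alpha}2\log\pi$ comes from $\kappa_{n-j}$ together with the Gamma normalization. The $\theta$-integral is handled by the Laplace principle, which gives $\sup_\theta\phi_\alpha(\theta,\lambda)$. This needs Lemma~\ref{lem:endpoints} for finiteness of $Z$, $B$ and $A$, and Lemma~\ref{lem:coercive} to control $\theta\to0$ and $\theta\to\infty$ so that no mass escapes. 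The $A$-term involves only a bounded number of factors, so it contributes nothing exponentially.

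For the uniqueness claims I would show that $\theta\mapsto\phi_\alpha(\theta,\lambda)$ is strictly concave in $\log\theta$. The terms $\log Z$ and $\log B$ are log-Laplace transforms in $-\theta$, so they are convex in $\theta$; the argument therefore has to work in the variable $u=\log\theta$ and use the $\frac{1-\alpha}2\log\theta$ term. I am unsure this is right and would instead try reparametrizing by the substitution $s\mapsto$ scaling. Likewise I would show that $\Lambda_\alpha$ is strictly convex in $\lambda$: a supremum of functions that are jointly convex in $\lambda$ (Hölder) is convex. Differentiating $\phi_\alpha$ in $\theta$ and $\lambda$, with an integration by parts linking $A$ and $B$ via $\int M''e^{-\theta M'^2-\lambda M}$, yields the two equations \eqref{eq-optimality-main}.

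\emph{Stage 3: inversion by exponential change of measure (the main obstacle).} The upper bound is immediate. Monotonicity of $V_j(K_n(c))$ in $c$ gives $Q_{n,j}(\lambda)\ge\int_{n\rho}^\infty e^{-\lambda c}V_j(K_n(n\rho))\dd c$, so $\limsup n^{-1}\log V_{j_n}\le\lambda\rho+\Lambda(\lambda)$ for every $\lambda>0$. For the lower bound, let $f(c)=V_j(K_n(c))$. By \eqref{eq:intrinsic-BM} and the convexity of $F_n$, which gives $(1-t)K_n(c)+tK_n(c')\subset K_n((1-t)c+tc')$, the function $f^{1/j}$ is concave, so $f$ is log-concave. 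Choose $\lambda_n$ so that the tilted density $p(c)\propto e^{-\lambda_n c}f(c)$ has mean exactly $n\rho$. Such a $\lambda_n$ exists by continuity and monotonicity of the mean, which tends to $\infty$ as $\lambda\to0$ and to $0$ as $\lambda\to\infty$. A log-concave density is bounded below at its mean by a constant multiple of $1/\text{std}$. The standard deviation is at most polynomial in $n$, and a crude bound suffices: it is of order the mean, via the exponential-tail property of log-concave laws. Hence $e^{-\lambda_n n\rho}f(n\rho)\ge Q_{n,j}(\lambda_n)/\mathrm{poly}(n)$. The remaining step is to pass from $\lambda_n$ to the limit. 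I must show that $\lambda_n$ stays in a compact subset of $(0,\infty)$, using the coercivity of Lemma~\ref{lem:coercive} together with the fact that $n^{-1}\log Q_{n,j}$ is convex and converges pointwise, hence locally uniformly. Then $\liminf n^{-1}\log f(n\rho)\ge\liminf(\lambda_n\rho+n^{-1}\log Q(\lambda_n))\ge\inf_\lambda\{\lambda\rho+\Lambda(\lambda)\}$. The compactness of $\lambda_n$ is where I expect most of the care to be needed.
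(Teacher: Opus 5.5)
Your route to the limit \eqref{eq:mainrho} is the paper's: factorization, the Laplace principle in $\theta$, tilting to mean exactly $n\rho$, a lower bound for a log-concave density at its mean, and confinement of the finite-$n$ minimizers by convexity and local uniform convergence. Your $1/\sigma$ bound with $\sigma\le m$ is a fine substitute for Fradelizi plus Markov.

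\textbf{The gap: uniqueness of the inner maximizer.} The uniqueness and characterization claims rest on this, and the argument you propose fails for general $M$. In $u=\log\theta$ the term $\frac{1-\alpha}{2}\log\theta$ is linear, so it contributes nothing to concavity. You would need $\alpha\log Z+(1-\alpha)\log B$ to be concave in $u$. But $\partial_u^2\log Z=\theta^2\Var_Z(M'^2)-\theta\,\E_Z[M'^2]$ can be positive. Take, for example, an admissible $M$ with affine tail whose derivative has two long plateaus at levels $\varepsilon\ll K$. Then $M'^2$ is nearly two-valued under $\mu_{Z,\theta,\lambda}$, and for $\alpha$ close to $1$ the function $\phiA(e^u,\lambda)$ is not concave. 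Rescaling $s$ only helps for homogeneous $M$.

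The missing idea is that $\phiA(\cdot,\lambda)$ is unimodal rather than concave. You already invoke the integration by parts $B=\lambda A+2\theta D$, with $D=\int M'^2M''e^{-\lambda M-\theta M'^2}\dd s$, but only to derive the stationarity equation. Combined with $Z_\theta=-A$ and $B_\theta=-D$, it gives
\[
\partial_\theta\phiA=\frac{\alpha A}{Z}\Bigl(\frac{\xi_\lambda}{\gamma_\lambda(\theta)}-1\Bigr),\qquad \gamma_\lambda=\frac{\theta B}{Z},\qquad \xi_\lambda=\frac{(1-\alpha)\lambda}{2\alpha}.
\]
Moreover $\gamma_\lambda'=\frac{B+\lambda A}{2Z}+\frac{\theta AB}{Z^2}>0$, and $\gamma_\lambda$ runs from $0$ to $\infty$ by Lemma~\ref{lem:endpoints}. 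Hence the first equation in \eqref{eq-optimality-main} has exactly one root $\theta_*(\lambda)$ for each $\lambda$. This is what the claim of a unique solution in $(0,\infty)^2$ needs: uniqueness of the maximizer alone would not exclude further critical points.

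\textbf{Consequences for the outer problem.} Uniqueness of $\theta_*$, with the compactness in Lemma~\ref{lem:endpoints}(iv), makes $\theta_*$ continuous. That is what gives differentiability of $\PhiA$ with $\PhiA'(\lambda)=\partial_\lambda\phiA(\theta_*(\lambda),\lambda)$. Without it, $\PhiA$ could have a kink at $\lambda_\rho$, where neither maximizer need satisfy the second equation. So ``differentiating $\phiA$ in $\theta$ and $\lambda$'' does not yet characterize the optimal pair.

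\textbf{Strict convexity of $\Lambda$.} A supremum of convex functions is only convex. Uniqueness of $\lambda_\rho$ needs strict convexity of $\Lambda$. You get it by evaluating at a maximizer $\theta_t$ for $\lambda_t$ (attainment suffices) and using $\partial_\lambda^2\phiA=\alpha\Var_Z(M)+(1-\alpha)\Var_B(M)>0$.

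\textbf{The $\theta$-tails.} In Stage~2 these are not controlled by Lemma~\ref{lem:coercive}, which concerns $\lambda$. Near $\theta=0$ the paper uses the integrated bound $\int_0^\infty AB\,\dd\theta\le2\pi/\lambda$ together with monotonicity of $Z$ and $\sqrt\theta B$, since $\sqrt\theta B$ need not vanish at a polynomial rate. Near $\infty$ it uses $B\le\sqrt{\pi/\theta}$ and $A\le Z(0,\lambda)/(e\theta)$.
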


Write $\rate(\alpha,\rho)$ for the right-hand side of
\eqref{eq:mainrho}.  Equivalently,
$V_{j_n}(K_n(n\rho))=\exp\{n\rate(\alpha,\rho)+o(n)\}$ as $n\to\infty$.
The two quotients in \eqref{eq-optimality-main} are the means of $M$
under the probability densities proportional to
$e^{-\lambda M-\theta M'^2}$ and to $M''e^{-\lambda M-\theta M'^2}$, so
the second equation matches the normalized radius $\rho$ to a mixture,
with weights $\alpha$ and $1-\alpha$, of two coordinate laws.
Section~\ref{sec:known-cases} solves \eqref{eq-optimality-main} in
closed form for $M(s)=s^2$.

The full profile is given by the following corollary, proved in
Section~\ref{sec:inversion}.

\begin{corollary}[{The profile on $[0,1]$}]\label{cor:profile}
Assume \textnormal{(M1)--(M2)} and fix $\rho>0$.  Put $\rate(0,\rho)=0$ and
\[
\rate(1,\rho)=\inf_{\lambda>0}\{\lambda\rho+\log Z(0,\lambda)\}.
\]
\begin{enumerate}[label=\textnormal{(\roman*)},nosep]
\item The infimum has a unique minimizer, characterized by
$\int_\R M(s)e^{-\lambda M(s)}\dd s=\rho Z(0,\lambda)$, and
\[
\lim_{n\to\infty}\frac1n\log\Vol(K_n(n\rho))=\rate(1,\rho).
\]
\item The function $\alpha\mapsto\rate(\alpha,\rho)$ is concave and
continuous on $[0,1]$, and
\begin{equation}\label{eq:full-uniform}
\lim_{n\to\infty}\max_{0\le j\le n}
\Bigl|\frac1n\log V_j(K_n(n\rho))-\rate\bigl(\tfrac jn,\rho\bigr)\Bigr|=0.
\end{equation}
In particular, $n^{-1}\log V_{j_n}(K_n(n\rho))\to\rate(\alpha,\rho)$ for
every sequence $j_n\in\{0,\ldots,n\}$ with $j_n/n\to\alpha\in[0,1]$.
Thus every sublinear sequence $j_n=o(n)$ has limit
$\rate(0,\rho)=0$, and $j_n=\lfloor\alpha n\rfloor$ has limit
$\rate(\alpha,\rho)$ for every $\alpha\in[0,1]$.
\end{enumerate}
\end{corollary}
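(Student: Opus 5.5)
We prove (i) first, then (ii) by interpolating between Theorem~\ref{thm:main} in the interior and the two endpoint rates.

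\emph{Part (i).} This is the $\alpha=1$ case of the same inversion. The Laplace transform of the volume in the radius is exact. Coarea with $\psi=1/\lVert\nabla F_n\rVert$ is not even needed, since Fubini and $\int_{F_n(x)}^\infty e^{-\lambda c}\dd c=\lambda^{-1}e^{-\lambda F_n(x)}$ give
\[
\int_0^\infty e^{-\lambda c}\Vol(K_n(c))\dd c=\lambda^{-1}Z(0,\lambda)^n .
\]
Hence $\Lambda_1(\lambda)=\log Z(0,\lambda)$, which is strictly convex in $\lambda$ by Cauchy--Schwarz, with derivative equal to minus the mean of $M$ under $e^{-\lambda M}/Z(0,\lambda)$. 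This mean decreases from $+\infty$ as $\lambda\downarrow0$ (because $M$ grows at most... the mean is unbounded as $\lambda\to0$ since $Z(0,\lambda)\to\infty$) to $0$ as $\lambda\to\infty$. Therefore $\lambda\rho+\log Z(0,\lambda)$ has a unique minimizer, characterized by the stated equation.

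The limit then follows from the inversion argument of Proposition~\ref{prop:inversion}. We tilt the density proportional to $e^{-\lambda c}\Vol(K_n(c))$ so that its mean is exactly $n\rho$. Log-concavity of $c\mapsto\Vol(K_n(c))$, which holds by Brunn--Minkowski because $K_n$ is the sublevel set of a convex function, bounds this density at its mean from below polynomially. This yields matching upper and lower bounds.

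\emph{Part (ii), interior.} By Theorem~\ref{thm:main}, for $\alpha\in(0,1)$ the value $\rate(\alpha,\rho)$ is the pointwise limit of $n^{-1}\log V_{j_n}$. Concavity in $\alpha$ on $(0,1)$ follows from log-concavity of the sequence $j\mapsto V_j(K)$, the Alexandrov--Fenchel (Newton-type) inequality $V_j^2\ge c_j V_{j-1}V_{j+1}$. The constants $c_j$ have subexponential effect after normalizing by the binomial coefficients and $\kappa$'s. Concretely, $j\mapsto V_j/\binom nj\cdot(\text{ball factors})$ is log-concave, and $H(\alpha)$ and the $\pi$-term are concave. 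It is cleaner, however, to take limits: for $\alpha<\beta<\gamma$ with $\beta=t\alpha+(1-t)\gamma$, apply the log-concave sequence inequality along the indices $\lfloor\alpha n\rfloor,\lfloor\beta n\rfloor,\lfloor\gamma n\rfloor$ and pass to the limit. This gives midpoint-type concavity, hence concavity on $(0,1)$ and continuity in the interior.

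\emph{Part (ii), endpoints and uniformity.} Continuity at the endpoints is the main obstacle. I would prove it via sandwich bounds. Monotonicity and \eqref{eq:body-sandwich} give
\[
V_j\bigl(v(\rho)B_2^n\bigr)\le V_j(K_n(n\rho))\le V_j\bigl(\sqrt n\,v(n\rho)B_2^n\bigr),
\]
with the explicit ball formula $V_j(rB_2^n)=\binom nj\kappa_n/\kappa_{n-j}\,r^j$. This bounds $n^{-1}\log V_j$ near $j=0$ but is too crude near $j=n$. For $\alpha\to0$ I would instead use the lower bound from the inner ball together with an upper bound on $\rate(\alpha,\rho)$ obtained from the variational formula. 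For $\alpha\to1$ I would show that $\rate(\alpha,\rho)\to\rate(1,\rho)$ directly from the formula: as $\alpha\to1$ the factor $(1-\alpha)(\tfrac12\log\theta+\log B)$ should vanish after optimizing $\theta$, with $\theta_*\to0$ and $Z(\theta,\lambda)\to Z(0,\lambda)$, using Lemma~\ref{lem:coercive}-type bounds to keep $\lambda_\rho$ in a compact set. Concavity plus a lower semicontinuous limit at the endpoints then gives continuity on $[0,1]$.

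For uniformity, a concave, continuous limit of log-concave discrete profiles converges uniformly (a Dini/Pólya-type argument). Pointwise convergence along every sequence $j_n/n\to\alpha\in[0,1]$, including sublinear $j_n$ handled by the ball sandwich and $\rate(0,\rho)=0$, together with a compactness argument on $[0,1]$, yields \eqref{eq:full-uniform}: a sequence of near-maximizers $j_n$ would have a convergent subsequence $j_n/n\to\alpha$, contradicting pointwise convergence combined with continuity of $\rate$.
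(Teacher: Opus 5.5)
Part (i) and the interior of part (ii) are fine and essentially follow the paper: Tonelli gives $Q_n(\lambda)=Z(0,\lambda)^n/\lambda$, and you then use log-concavity and Proposition~\ref{prop:inversion}. McMullen's inequality even holds with constant $1$, so no normalization by $c_j$ is needed. The gap is at the endpoints of $[0,1]$.

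Your lower bound for $\alpha\downarrow0$ ``from the inner ball'' cannot work. With $r=v(\rho)$ fixed, Stirling's formula gives
\[
\frac1n\log V_{\lfloor\alpha n\rfloor}(rB_2^n)
=H(\alpha)+\frac\alpha2\log(2\pi er^2)+\frac{1-\alpha}2\log(1-\alpha)
-\frac\alpha2\log n+o(1)\longrightarrow-\infty
\]
for every fixed $\alpha>0$, so it carries no information about $\rate(\alpha,\rho)$. For the same reason, the sandwich $v(\rho)B_2^n\subset K_n(n\rho)\subset\sqrt n\,v(n\rho)B_2^n$ loses factors up to $n^{O(j)}$. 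It controls $n^{-1}\log V_{j_n}$ only when $j_n\log n=o(n)$, not for all $j_n=o(n)$; take $j_n=\lfloor n/\log\log n\rfloor$.

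Your compactness argument for \eqref{eq:full-uniform} therefore lacks its input at $\alpha=0$. At $\alpha=1$ it has no input at all for $j_n<n$: Theorem~\ref{thm:main} requires $\alpha<1$, and part~(i) concerns only $j=n$. Finally, concavity on the open interval $(0,1)$ does not link $\rate$ to the separately defined values $\rate(0,\rho)=0$ and $\rate(1,\rho)$. So ``concavity plus a lower semicontinuous limit'' does not yet give continuity at the endpoints.

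The missing step is to run your log-concavity argument on the closed interval. Let $\zeta_n$ be the piecewise linear interpolation of $j\mapsto n^{-1}\log V_j(K_n(n\rho))$ with nodes $j/n$, $0\le j\le n$. It is concave, and it converges pointwise on all of $[0,1]$: $\zeta_n(0)=0$ for every $n$, $\zeta_n(1)\to\rate(1,\rho)$ by part~(i), and for $x\in(0,1)$ Theorem~\ref{thm:main} at the two adjacent nodes gives $\zeta_n(x)\to\rate(x,\rho)$.

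The limit is therefore concave on all of $[0,1]$. The chord through $(0,0)$ and $(1,\rate(1,\rho))$ gives $\rate(\alpha,\rho)\ge\alpha\,\rate(1,\rho)$, which supplies both lower limits at once. Your $\theta_*\to0$ analysis at $\alpha\uparrow1$ then becomes unnecessary; it could likely be completed with coercivity uniform in $\alpha$, but you have not done so.

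The upper limits follow from $\sqrt\theta B\le\sqrt\pi$ and $Z(\theta,\lambda)\le Z(0,\lambda)$. These give $\rate(\alpha,\rho)\le H(\alpha)+\lambda_0\rho+\alpha\log Z(0,\lambda_0)$ for every $\lambda_0>0$; let $\alpha\to0$ or $\alpha\to1$ and then take the infimum over $\lambda_0$.

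Once continuity is established, the P\'olya-type fact you invoke yields \eqref{eq:full-uniform}, in the form of Lemma~\ref{lem:convex-closed-uniform} applied to $-\zeta_n$. The statements for sublinear sequences and for $j_n/n\to1$ are then consequences rather than inputs. If you want a direct geometric lower bound near $\alpha=0$, use the inscribed cube $[-v(\rho),v(\rho)]^n$, whose intrinsic volumes are $\binom nj(2v(\rho))^j$, rather than the Euclidean ball.
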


For example, when $M(s)=s^2$, the formula gives
\[
\rate(\alpha,\rho)=H(\alpha)+\frac\alpha2\log(2\pi e\rho)
 +\frac{1-\alpha}{2}\log(1-\alpha).
\]
Section~\ref{sec:known-cases} derives this expression and checks it against
the exact intrinsic volumes of a Euclidean ball.

\subsection{Intrinsic volumes as functions of the radius}
\label{sec-radius-profiles}

The functions $c\mapsto V_j(K_n(c))$ enter the proof twice.  Their
polynomial growth makes the radius Laplace transform of
Section~\ref{sec:factorization} finite, and their log-concavity, which
follows from Brunn--Minkowski in the stronger form below, drives the
inversion of Section~\ref{sec:inversion}.

\begin{lemma}[Intrinsic-volume profiles]\label{lem:profile}
For $n\ge1$ and $1\le j\le n$, the function
$f_{n,j}(c)=V_j(K_n(c))$ is positive and nondecreasing on $(0,\infty)$,
has at most polynomial growth, and $f_{n,j}^{1/j}$ is concave.
In particular, $f_{n,j}$ is log-concave.
\end{lemma}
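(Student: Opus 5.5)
Positivity and monotonicity follow directly from the sandwich \eqref{eq:body-sandwich} and the monotonicity of intrinsic volumes. For $0<c_1<c_2$ we have $K_n(c_1)\subset K_n(c_2)$, since $F_n(x)\le c_1$ implies $F_n(x)\le c_2$, and hence $f_{n,j}(c_1)\le f_{n,j}(c_2)$. The inclusion $v(c/n)B_2^n\subset K_n(c)$ with $v(c/n)>0$ and homogeneity give
\[
f_{n,j}(c)\ge v(c/n)^jV_j(B_2^n)>0 .
\]
The other inclusion $K_n(c)\subset\sqrt n\,v(c)B_2^n$ together with $v(c)\le C(1+c)$ gives
\[
f_{n,j}(c)\le n^{j/2}C^j(1+c)^jV_j(B_2^n),
\]
so $f_{n,j}$ grows at most polynomially, of degree $j$.

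For concavity of $f_{n,j}^{1/j}$, the key step is the set inclusion
\[
(1-t)K_n(c_0)+tK_n(c_1)\subset K_n\bigl((1-t)c_0+tc_1\bigr),\qquad 0\le t\le 1.
\]
This holds because $F_n$ is convex, being a sum of convex functions of the coordinates. If $F_n(x)\le c_0$ and $F_n(y)\le c_1$, then
\[
F_n((1-t)x+ty)\le(1-t)F_n(x)+tF_n(y)\le(1-t)c_0+tc_1 .
\]
Combining monotonicity of $V_j$ with the Brunn--Minkowski inequality \eqref{eq:intrinsic-BM} then yields
\[
f_{n,j}((1-t)c_0+tc_1)^{1/j}\ge V_j\bigl((1-t)K_n(c_0)+tK_n(c_1)\bigr)^{1/j}\ge(1-t)f_{n,j}(c_0)^{1/j}+tf_{n,j}(c_1)^{1/j}.
\]
This is concavity of $f_{n,j}^{1/j}$ on $(0,\infty)$.

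Log-concavity follows at once. The function $\log f_{n,j}=j\log\bigl(f_{n,j}^{1/j}\bigr)$ is a composition of the concave, nondecreasing function $\log$ with the positive concave function $f_{n,j}^{1/j}$, and is therefore concave.

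No step is a genuine obstacle. The only point to check is that \eqref{eq:intrinsic-BM} applies, which needs both sets to be convex bodies. This is ensured by \eqref{eq:body-sandwich} for every $c>0$. We also need $1\le j$ for the exponent $1/j$ to make sense, which is exactly the range in the statement; the case $j=0$ is trivial since $V_0\equiv1$.
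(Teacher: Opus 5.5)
Your proof is correct and follows the paper's argument step for step. You use the sandwich \eqref{eq:body-sandwich} for positivity, monotonicity and polynomial growth, and then the inclusion $(1-t)K_n(c_0)+tK_n(c_1)\subset K_n((1-t)c_0+tc_1)$ together with monotonicity and \eqref{eq:intrinsic-BM} for concavity of $f_{n,j}^{1/j}$. The only cosmetic difference is that the paper deduces log-concavity through the weighted arithmetic--geometric mean inequality rather than by composing with the concave nondecreasing $\log$; the two arguments are equivalent.
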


\begin{proof}
\emph{Positivity, monotonicity, and growth.}
The left inclusion in \eqref{eq:body-sandwich}, monotonicity, and
homogeneity give
\[
f_{n,j}(c)\ge v(c/n)^jV_j(B_2^n)>0.
\]
Also $c_0\le c_1$ implies $K_n(c_0)\subset K_n(c_1)$, so
$f_{n,j}(c_0)\le f_{n,j}(c_1)$.
The right inclusion in \eqref{eq:body-sandwich} and the bound
$v(c)\le C(1+c)$ yield
\[
f_{n,j}(c)
\le n^{j/2}v(c)^jV_j(B_2^n)
\le C_{n,j}(1+c)^j.
\]
The constant depends on $n$, $j$, and $M$, and no uniformity in $n$ is
needed.

\emph{Concavity.}
Let $c_t=(1-t)c_0+tc_1$ with $c_0,c_1>0$ and $0\le t\le1$.
For $x_0\in K_n(c_0)$ and $x_1\in K_n(c_1)$, convexity gives
\[
F_n((1-t)x_0+tx_1)
\le(1-t)F_n(x_0)+tF_n(x_1)
\le c_t.
\]
Consequently,
\[
(1-t)K_n(c_0)+tK_n(c_1)\subset K_n(c_t).
\]
Set $u=f_{n,j}^{1/j}$.  Applying monotonicity to this inclusion and then
\eqref{eq:intrinsic-BM}, we obtain
\begin{align*}
u(c_t)
&\ge V_j((1-t)K_n(c_0)+tK_n(c_1))^{1/j}\\
&\ge(1-t)V_j(K_n(c_0))^{1/j}+tV_j(K_n(c_1))^{1/j}\\
&=(1-t)u(c_0)+tu(c_1).
\end{align*}
Thus $u$ is positive and concave.  Weighted arithmetic--geometric mean
now gives
\[
u(c_t)\ge(1-t)u(c_0)+tu(c_1)
\ge u(c_0)^{1-t}u(c_1)^t.
\]
Raising this inequality to the power $j$ proves log-concavity of
$f_{n,j}$.
\end{proof}

Before turning to the proofs, we record where the regularity assumption
is used.

\begin{remark}[On the role of \textnormal{(M2)}]\label{rem-assumptions}
Assumption \textnormal{(M2)} enters in two ways.  First, through the
curvature formulas of Section~\ref{sec:factorization}.  The $C^2$
regularity of $M$ off the origin makes $\partial K_n(c)$ a $C^2$
hypersurface off the coordinate hyperplanes, which is the hypothesis of
Lemma~\ref{lem-curvature-density}, and it makes the curvature integrand
of Section~\ref{sec:coarea} continuous, hence measurable, on the
complement of the coordinate hyperplanes.  Second, it gives the
local absolute continuity of $M'$ established at the beginning of the
proof of Lemma~\ref{lem:endpoints}.  All remaining arguments, in
particular the one-dimensional estimates of Section~\ref{sec:asymptotics}
and the inversion of Section~\ref{sec:inversion}, use only that $M'$ is
odd, nondecreasing, and locally absolutely continuous with $M''\ge0$
almost everywhere.  We do not pursue the resulting relaxation of
\textnormal{(M2)} here.
\end{remark}

\section{The exact factorized integral formula}\label{sec:factorization}

Fix $n\ge2$ and $j\in\{0,\ldots,n-1\}$, and define the radius Laplace
transform
\[
Q_{n,j}(\lambda)=\int_0^\infty e^{-\lambda c}V_j(K_n(c))\dd c,
\qquad \lambda>0.
\]
For $j\ge1$, the polynomial growth bound in Lemma~\ref{lem:profile} makes
this integral finite.  For $j=0$, the identity $V_0(K_n(c))=1$ gives
$Q_{n,0}(\lambda)=1/\lambda$.  The main asymptotic theorem only uses
$1\le j_n\le n-1$, which holds eventually when
$j_n/n\to\alpha\in(0,1)$.  Including $j=0$ here records the full range of
the exact formula.

As explained in the introduction, the Laplace transform in $c$ replaces
the homogeneity used in \cite[Proposition~4.9]{KabluchkoMarynychProchno}
for $M(s)=|s|^p$.

\begin{proposition}[Exact factorized integral formula]\label{prop:product}
For every $n\ge2$, every $j\in\{0,\ldots,n-1\}$, and every $\lambda>0$,
\begin{equation}\label{eq:product}
\boxed{
Q_{n,j}(\lambda)
=\frac{n\binom{n-1}{j}}{2\pi^{(n-j)/2}}
\int_0^\infty
\theta^{(n-j)/2-1}Z(\theta,\lambda)^j
B(\theta,\lambda)^{n-1-j}A(\theta,\lambda)\dd\theta.}
\end{equation}
\end{proposition}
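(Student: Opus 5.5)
Our approach is to express $V_j$ of a smooth convex body through the curvature integral, apply coarea in $c$, and use a Gamma identity to factorize. For a convex body whose boundary is $C^2$ off a null set, the intrinsic volumes are
\[
V_j(K)=\frac{1}{n\kappa_{n-j}}\binom{n}{j}\int_{\partial K}e_{n-1-j}(\chi_1,\ldots,\chi_{n-1})\dd\cH^{n-1},
\]
which is the content we expect from Lemma~\ref{lem-curvature-density}. We apply this to $K_n(c)$ off the coordinate hyperplanes, which are $\cH^{n-1}$-null on the boundary. The case $j=0$ can be checked directly: $A$, $B$ and $Z$ reduce the right side to $1/\lambda$ by the same computation. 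We therefore concentrate on the curvature formula.

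\emph{Step 1: curvature of a level set of a separable function.} For $F_n=\sum M(x_i)$ we have $\nabla F_n=(M'(x_i))_i$ and $D^2F_n=\operatorname{diag}(M''(x_i))$. The shape operator is the Hessian restricted to $\nabla F_n^\perp$, divided by $g=\lVert\nabla F_n\rVert$. We then use the standard identity for the elementary symmetric functions of a diagonal matrix $D=\operatorname{diag}(d_i)$ compressed to $u^\perp$, with $u$ a unit vector:
\[
e_k(D|_{u^\perp})=\sum_{|I|=k}\Bigl(\prod_{i\in I}d_i\Bigr)\sum_{l\notin I}u_l^2 .
\]
This is the coefficient extraction from $\det(D+tI)$ bordered by $u$. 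With $k=n-1-j$, $d_i=M''(x_i)$ and $u_l=M'(x_l)/g$, we obtain
\[
e_{n-1-j}(\chi)=g^{-(n-1-j)-2}\sum_{|I|=n-1-j}\prod_{i\in I}M''(x_i)\sum_{l\notin I}M'(x_l)^2 .
\]

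\emph{Step 2: coarea and the Laplace transform.} We integrate $e^{-\lambda c}$ over $c$ and apply the coarea formula with $\psi=e^{-\lambda F_n}e_{n-1-j}(\chi)/g$. This gives
\[
Q_{n,j}=\frac{\binom nj}{n\kappa_{n-j}}\int_{\R^n}e^{-\lambda F_n(x)}\,g^{-(n-j)-1}\sum_{I,l}\cdots\dd x .
\]
The growth of the integrand near infinity and local integrability near the zero set of $g$ (the origin) must be checked; since all terms are nonnegative, Tonelli lets us proceed and finiteness follows at the end. We then write $g^{-(n-j+1)}=\Gamma(\tfrac{n-j+1}{2})^{-1}\int_0^\infty\theta^{(n-j+1)/2-1}e^{-\theta g^2}\dd\theta$. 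Since $g^2=\sum M'(x_i)^2$, the integrand factorizes over coordinates. The $j$ coordinates outside $I\cup\{l\}$ each contribute $Z$, the coordinates in $I$ each contribute $B$, and the coordinate $l$ contributes $A$. Counting the pairs $(I,l)$ gives $\binom{n}{n-1-j}(j+1)$ identical terms.

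\emph{Step 3: constants.} We collect $\binom nj\binom{n}{j+1}(j+1)/\bigl(n\kappa_{n-j}\Gamma(\tfrac{n-j+1}2)\bigr)$. Using $\kappa_{n-j}=\pi^{(n-j)/2}/\Gamma(\tfrac{n-j}2+1)$ and the Legendre duplication formula, this should reduce to $n\binom{n-1}{j}/(2\pi^{(n-j)/2})$, possibly after the exponent of $\theta$ is matched against the claimed $\theta^{(n-j)/2-1}$. If the power or the normalization of the curvature formula does not match, we expect the correct normalization to be $V_j=\binom{n-1}{j}\kappa_{n-j}^{-1}\ldots$ rather than the one written above. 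In that case we would recheck the powers of $g$, since the exponent of $\theta$ in \eqref{eq:product} forces the total power of $g$ to be $n-j$. This bookkeeping, together with justifying the curvature formula on a boundary that is only $C^2$ off the hyperplanes, is the main obstacle. For the latter we would invoke Lemma~\ref{lem-curvature-density}, or an approximation by the $C^2$ parts plus the fact that the hyperplane pieces carry no curvature measure.
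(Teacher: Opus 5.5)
Your route is the same as the paper's: a curvature integral on the regular boundary, coarea in $c$, the Gamma identity, and coordinate factorization. Your Step~1 identity and the count $\binom{n}{n-1-j}(j+1)=n\binom{n-1}{j}$ of pairs $(I,l)$ are correct. But the proposal never reaches \eqref{eq:product}, and the duplication formula cannot remove the mismatch you flag in Step~3, because it comes from two errors upstream.

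\emph{The coarea weight is inverted.} Coarea reads $\int_{\R^n}\psi\lVert\nabla F_n\rVert\dd x=\int_0^\infty\int_{\Sigma_c}\psi\dd\cH^{n-1}\dd c$. To obtain $\int_0^\infty e^{-\lambda c}\int_{\Sigma_c}e_k(\chi)\dd\cH^{n-1}\dd c$ you must take $\psi=e^{-\lambda F_n}e_k(\chi)$. The factor $g=\lVert\nabla F_n\rVert$ then multiplies the integrand and lowers $g^{-(k+2)}$ to $g^{-(k+1)}=g^{-(n-j)}$. With your $\psi=e^{-\lambda F_n}e_k(\chi)/g$, the $x$-integral you write equals $\int_0^\infty e^{-\lambda c}\int_{\Sigma_c}e_k(\chi)g^{-1}\dd\cH^{n-1}\dd c$, which is a different quantity. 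For $j=n-1$ it is $\int_{\R^n}e^{-\lambda F_n}\dd x$ instead of $\int_{\R^n}e^{-\lambda F_n}\lVert\nabla F_n\rVert\dd x=\int_0^\infty e^{-\lambda c}\cH^{n-1}(\Sigma_c)\dd c$. This is the source of $\theta^{(n-j+1)/2-1}$ and $\Gamma(\tfrac{n-j+1}{2})$. Since the $\theta$-integrand itself is wrong, no identity between constants can fix it.

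\emph{The normalization is off.} The prefactor $\binom nj/(n\kappa_{n-j})$ belongs with the normalized function $\binom{n-1}{k}^{-1}e_k$. For the unnormalized $e_k$ of your Step~1, the correct relation is $(n-j)\kappa_{n-j}V_j(K)=\int e_{n-1-j}(\chi)\dd\cH^{n-1}$, so your formula carries a spurious factor $\binom{n-1}{j}$. After both corrections, the Gamma identity is used with $a=(n-j)/2$. The constants then close by $\Gamma(z+1)=z\Gamma(z)$ alone, via $(n-j)\kappa_{n-j}\Gamma(\tfrac{n-j}{2})=2\pi^{(n-j)/2}$.

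\emph{The exceptional set is not handled.} You open by claiming the curvature formula holds whenever the boundary is $C^2$ off an $\cH^{n-1}$-null set. That is false. A polytope is $C^2$ off its $(n-2)$-skeleton, yet its $V_j$ for $j\le n-2$ is not given by the (zero) curvature integral over the facets. Even $C^1$ regularity does not rescue the claim: think of a planar $C^1$ curve whose tangent angle is a Cantor function of arclength. So $\cH^{n-1}$-nullity of $\Sigma_c\cap E$ is no reason at all. Invoking Lemma~\ref{lem-curvature-density} only covers Borel sets inside the $C^2$ part $\Sigma_c^\circ$. What is missing is $\mathcal C_j(K_n(c),\Sigma_c\cap E)=0$ for every $j$, which you assert at the end as a ``fact'' but do not prove.

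The paper proves it from the structure of $F_n$. Since $\partial K_n(c)$ is $C^1$, every point of the local parallel set over $\Sigma_c\cap E$ has the form $x=y+\tau\nu(y)$ with $\tau\ge0$. If $y_i=0$, then $\nu_i(y)=M'(0)/\lVert g(y)\rVert=0$, hence $x_i=0$. The local parallel set therefore lies in the Lebesgue-null set $E$, and the local Steiner formula \eqref{eq:local-steiner} forces all curvature measures of $\Sigma_c\cap E$ to vanish. Only then does $V_j(K_n(c))=\mathcal C_j(K_n(c),\Sigma_c^\circ)$ hold, so that the lemma applies with $\beta=\Omega=\Sigma_c^\circ$.
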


Throughout this section, $k=n-1-j$, so that $k+1=n-j$.
Section~\ref{sec:curvature} computes the principal curvatures off the
coordinate hyperplanes, Section~\ref{sec:curvature-measures} expresses
$V_j(K_n(c))$ as a curvature integral over this regular part of the
boundary, and Section~\ref{sec:coarea} applies the coarea formula and
separates the coordinates.

\subsection{Curvature on the regular boundary}\label{sec:curvature}

For $c>0$, write
\[
\Sigma_c=\partial K_n(c)=\{x\in\R^n:F_n(x)=c\},
\qquad
E=\bigcup_{i=1}^n\{x_i=0\},
\qquad
\Sigma_c^\circ=\Sigma_c\setminus E.
\]
Since $M'(s)\ne0$ for $s\ne0$ (Section~\ref{sec:defining}),
$\nabla F_n(x)=(M'(x_1),\ldots,M'(x_n))$ vanishes only at $x=0\notin\Sigma_c$,
so $\Sigma_c$ is a regular level set of the $C^1$ function $F_n$.  It is a
$C^1$ hypersurface with tangent space
$T_x\Sigma_c=\nabla F_n(x)^\perp$, and since $F_n\le c$ on $K_n(c)$, the
outward unit normal is $\nu=\nabla F_n/\lVert\nabla F_n\rVert$.  Off the
coordinate hyperplanes all summands of $F_n$ are $C^2$, so $\Sigma_c$ is
$C^2$ in a neighborhood of every point of the relatively open set
$\Sigma_c^\circ$, where we use the principal curvatures
$\chi_1,\ldots,\chi_{n-1}$ of Section~\ref{sec:prelim}.  Write
\[
g_i=M'(x_i),\qquad g=\nabla F_n(x),\qquad
h_i=M''(x_i),\qquad W=D^2F_n(x)=\operatorname{diag}(h_1,\ldots,h_n),
\]
the last two on $\Sigma_c^\circ$.  The following identity appears in
\cite[Proposition~4.2]{KabluchkoMarynychProchno}. We include its short
proof for the reader's convenience.

\begin{lemma}[Curvature identity]\label{lem:curvature}
At every point of $\Sigma_c^\circ$,
\begin{equation}\label{eq:curvature}
e_r(\chi_1,\ldots,\chi_{n-1})
=\frac1{\lVert g\rVert^{r+2}}
\sum_{i=1}^n g_i^2e_r\bigl((h_\ell)_{\ell\ne i}\bigr),
\qquad 0\le r\le n-1.
\end{equation}
\end{lemma}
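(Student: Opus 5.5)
The plan is to compute the shape operator of the level set $\Sigma_c=\{F_n=c\}$ directly, and then evaluate its elementary symmetric functions through a bordered-determinant (matrix determinant lemma) identity. Fix $x\in\Sigma_c^\circ$. Since $F_n$ is $C^2$ near $x$ and $\nu=g/\lVert g\rVert$, differentiating $\nu$ along a tangent vector $u\in g^\perp$ gives
\[
D\nu(x)u=\frac{Wu}{\lVert g\rVert}-\frac{\langle g,Wu\rangle}{\lVert g\rVert^3}\,g .
\]
Hence $S_x=\lVert g\rVert^{-1}P W|_{g^\perp}$, where $P$ is the orthogonal projection onto $g^\perp$. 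Thus $\chi_1,\ldots,\chi_{n-1}$ are $\lVert g\rVert^{-1}$ times the eigenvalues of the compression $PWP$ restricted to $T_x\Sigma_c=g^\perp$. By homogeneity, $e_r(\chi)=\lVert g\rVert^{-r}e_r(\mu)$, where $\mu_1,\ldots,\mu_{n-1}$ are the eigenvalues of $PWP|_{g^\perp}$. It therefore suffices to show
\[
e_r(\mu)=\lVert g\rVert^{-2}\sum_{i=1}^n g_i^2\,e_r\bigl((h_\ell)_{\ell\ne i}\bigr).
\]

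To prove this, I will use a generating polynomial. Choose an orthonormal basis $u_1,\ldots,u_{n-1}$ of $g^\perp$ and set $u_n=g/\lVert g\rVert$. In this basis, $\det(I_{n-1}+tPWP|_{g^\perp})=\sum_r t^r e_r(\mu)$ is the determinant of the $(n-1)\times(n-1)$ principal block of $I+tW$. By Cramer's rule (the cofactor formula for an inverse), this block determinant equals $\det(I+tW)\cdot\bigl[(I+tW)^{-1}\bigr]_{nn}$ in the rotated basis, that is,
\[
\det(I+tW)\,\frac{\langle g,(I+tW)^{-1}g\rangle}{\lVert g\rVert^2}.
\]
Because $W$ is diagonal, this equals
\[
\lVert g\rVert^{-2}\sum_i g_i^2\prod_{\ell\ne i}(1+th_\ell).
\]
Comparing coefficients of $t^r$ then gives the claimed identity, and together with the prefactor $\lVert g\rVert^{-r}$ it yields \eqref{eq:curvature}.

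The main obstacle is only bookkeeping. The inverse step needs $I+tW$ to be invertible, which holds for small $t\ge0$ because $h_\ell\ge0$; since both sides are polynomials in $t$, the identity then extends to all $t$. The other point to check is that the convex-geometry sign convention for the shape operator yields $+W$ and not $-W$. The case $r=0$ is a sanity check: both sides equal $1$.
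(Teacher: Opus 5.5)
Your proposal is correct and follows the paper's proof essentially step for step: you write the shape operator as $\lVert g\rVert^{-1}PWP|_{g^\perp}$, use the block-determinant identity $\det(\text{block})=\det(Y)\langle\nu,Y^{-1}\nu\rangle$ for the diagonal matrix $Y=I+tW$, and compare coefficients of $t^r$. The paper calls this identity the Schur complement formula where you call it Cramer's rule. The remaining differences are cosmetic: you factor out $\lVert g\rVert^{-r}$ before the determinant computation instead of absorbing $\lVert g\rVert^{-1}$ into $Y$, and your restriction to small $t$ is unnecessary, since $h_\ell\ge0$ already makes $I+tW$ invertible for every $t\ge0$.
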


\begin{proof}
Since $\nu=g/\lVert g\rVert$ and
$Dg=W$, the shape operator is $S_x=\lVert g\rVert^{-1}(PWP)|_{\nu^\perp}$,
where $P=\mathrm{Id}-\nu\nu^{\mathsf T}$ is the orthogonal projection onto
$\nu^\perp$.  Hence, for $u\ge0$, the product
$\prod_{\ell=1}^{n-1}(1+u\chi_\ell)$ is the determinant of the compression
to $\nu^\perp$ of the positive definite diagonal matrix
$Y=\mathrm{Id}+uW/\lVert g\rVert$.  In an orthonormal basis whose first vector is $\nu$, this compression
is the lower right block of $Y$, and the Schur complement formula for
$\det Y$ shows that its determinant equals
\[
\det(Y)\langle\nu,Y^{-1}\nu\rangle
=\lVert g\rVert^{-2}\sum_{i=1}^ng_i^2
\prod_{\ell\ne i}\Bigl(1+\frac{uh_\ell}{\lVert g\rVert}\Bigr),
\]
where the second expression uses that $Y$ is diagonal and
$\nu_i=g_i/\lVert g\rVert$.  Comparing the coefficients of $u^r$ on both sides gives
\eqref{eq:curvature}.
\end{proof}

\subsection{Curvature measures and intrinsic volumes}
\label{sec:curvature-measures}

For a convex body $K\subseteq\R^n$, a Borel set $\beta\subseteq\partial K$,
and $t\ge0$, the local parallel set is
\[
\mathcal M_t(K,\beta)=\{x\in(K+tB_2^n)\setminus K:\Pi_K(x)\in\beta\}.
\]
The curvature measures $\mathcal C_0(K,\cdot),\ldots,\mathcal C_{n-1}(K,\cdot)$
(denoted $\Phi_j$ in \cite{KabluchkoMarynychProchno,Schneider}) are the
finite Borel measures on $\partial K$ determined by the local Steiner
formula
\begin{equation}\label{eq:local-steiner}
\Vol(\mathcal M_t(K,\beta))=\sum_{j=0}^{n-1}\kappa_{n-j}t^{n-j}\mathcal C_j(K,\beta),
\qquad t\ge0,
\end{equation}
see \cite[Section~4.2]{Schneider}.  For $\beta=\partial K$ this is
Steiner's formula, so $\mathcal C_j(K,\partial K)=V_j(K)$.  On the $C^2$
part of the boundary, the curvature measures have densities given by the
elementary symmetric functions of the principal curvatures.  In
\cite{Schneider} this is stated for bodies of class $C^2_+$, that is,
with $C^2$ boundary and positive principal curvatures, see
\cite[Section~4.2, (4.25)]{Schneider} and
\cite[(3.1)]{KabluchkoMarynychProchno}.  We need a local version in which
the boundary is $C^2$ only near the set in question and principal
curvatures may vanish.

\begin{lemma}[Curvature-measure density on the $C^2$ part]
\label{lem-curvature-density}
Let $K\subseteq\R^n$ be a convex body and let $\Omega\subseteq\partial K$
be relatively open with $\partial K$ of class $C^2$ in a neighborhood of
every point of $\Omega$.  Then, for every Borel set $\beta\subseteq\Omega$,
\begin{equation}\label{eq:curvature-density}
\mathcal C_{n-1-r}(K,\beta)
=\frac1{(r+1)\kappa_{r+1}}\int_\beta e_r(\chi)\dd\cH^{n-1},
\qquad 0\le r\le n-1,
\end{equation}
where $\chi=(\chi_1,\ldots,\chi_{n-1})$ are the principal curvatures.
\end{lemma}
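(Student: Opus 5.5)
We will compute $\Vol(\mathcal M_t(K,\beta))$ directly for small sets $\beta$ via a normal-bundle parametrization, and compare coefficients of $t^{n-j}$ with the local Steiner formula \eqref{eq:local-steiner}. Both sides of \eqref{eq:curvature-density} are finite Borel measures on $\Omega$: the left side because curvature measures are finite, the right side because $e_r(\chi)$ is continuous on $\Omega$. Hence it suffices to prove the identity for Borel sets $\beta$ contained in a small compact neighborhood $N\subset\Omega$ of an arbitrary point $y_0\in\Omega$, chosen so that $\partial K$ is $C^2$ on a neighborhood of $N$. By countable additivity and second countability, this local statement then extends to all Borel $\beta\subseteq\Omega$.

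\textbf{Local computation.} On such a neighborhood, the outward unit normal $\nu$ is $C^1$, and $N_K(y)=\R_+\nu(y)$ for $y\in N$. Since $x-\Pi_K(x)\in N_K(\Pi_K(x))$, every $x\in\mathcal M_t(K,\beta)$ has the form $x=y+s\nu(y)$ with $y=\Pi_K(x)\in\beta$ and $0<s\le t$. Conversely, every such point projects to $y$, because convexity gives $y+s\nu(y)-y\in N_K(y)$. Thus the map $\Psi(y,s)=y+s\nu(y)$ is a bijection from $\beta\times(0,t]$ onto $\mathcal M_t(K,\beta)$. It is $C^1$, and its Jacobian with respect to $\cH^{n-1}\otimes\dd s$ is
\[
\det(\mathrm{Id}+sS_y)=\prod_{\ell}(1+s\chi_\ell(y)),
\]
since $D\nu=S_y$ on the tangent space and the $s$-direction is orthogonal to it. The area formula, using injectivity of $\Psi$ (which follows from uniqueness of the projection), gives
\[
\Vol(\mathcal M_t(K,\beta))=\int_\beta\int_0^t\sum_{r=0}^{n-1}s^re_r(\chi)\dd s\dd\cH^{n-1}
=\sum_{r=0}^{n-1}\frac{t^{r+1}}{r+1}\int_\beta e_r(\chi)\dd\cH^{n-1}.
\]
Notably, no lower bound on the curvatures is needed: $1+s\chi_\ell\ge1$ because the $\chi_\ell$ are nonnegative.

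\textbf{Comparing coefficients.} The computation above holds for every $t>0$; no smallness of $t$ is required, since the parametrization is global in $s$ by convexity. Both sides are therefore polynomials in $t$. Matching the coefficient of $t^{r+1}$, with $j=n-1-r$, against $\kappa_{r+1}t^{r+1}\mathcal C_{n-1-r}(K,\beta)$ yields \eqref{eq:curvature-density}.

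\textbf{Main obstacle.} The delicate point is justifying the area formula for the map $\Psi$ that is only $C^1$. This is standard: Lipschitz maps suffice for the area formula, and $\Psi$ is locally Lipschitz on a neighborhood of $\beta\times[0,t]$. A second point needing care is the Jacobian identification. Here we use that $D\nu(y)$ maps $T_y\partial K$ into itself; this holds because $\lVert\nu\rVert=1$. We must also check that $S_y$ is self-adjoint, as recorded in Section~\ref{sec:prelim}, so that its eigenvalues are the $\chi_\ell$.
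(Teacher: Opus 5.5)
Your proposal is correct and follows the paper's proof. It uses the same normal-bundle map $\Psi(y,\tau)=y+\tau\nu(y)$, gets injectivity from the metric projection, computes the Jacobian $\prod_\ell(1+\tau\chi_\ell)$, applies the area formula to a map that is Lipschitz near a compact set, and compares the coefficients of $t^{r+1}$ with \eqref{eq:local-steiner}. The only structural difference is the reduction step: you localize to Borel subsets of small compact neighborhoods and extend by a countable partition, while the paper reduces to compact $\beta$ by inner regularity.

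One small slip: continuity of $e_r(\chi)$ on the open set $\Omega$ does not by itself make the right-hand side finite on all of $\Omega$. It only gives finiteness on compact subsets, which is what the paper claims. Your countable-additivity extension does not need global finiteness, so the argument is unaffected.
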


\begin{proof}
Both sides of \eqref{eq:curvature-density} are Borel measures on
$\Omega$.  The left side is finite.  The right side is finite on compact
subsets of $\Omega$, because $e_r(\chi)$ is continuous on $\Omega$ and
$\cH^{n-1}$ is finite on compact subsets of a $C^1$ hypersurface.  Being
relatively open in the compact set $\partial K$, the set $\Omega$ is a
$\sigma$-compact metric space, so both sides are Radon measures on
$\Omega$ and are determined by their values on compact sets by inner
regularity.  It therefore suffices to take $\beta$ compact.

On a neighborhood of $\beta$ the normal $\nu$ is $C^1$.  Consider the map
$\Psi(y,\tau)=y+\tau\nu(y)$ on $\beta\times(0,t]$.  Its image is
$\mathcal M_t(K,\beta)$, and it is injective, because
$\Pi_K(y+\tau\nu(y))=y$ for $\tau\ge0$ by the characterization of the
metric projection in Section~\ref{sec:prelim}.  It is Lipschitz, being
$C^1$ on a neighborhood of the compact set $\beta\times[0,t]$.  In an
orthonormal frame of principal directions at $y$, completed by $\nu(y)$,
its differential is
$\operatorname{diag}(1+\tau\chi_1(y),\ldots,1+\tau\chi_{n-1}(y),1)$, so
its Jacobian is $\prod_{\ell=1}^{n-1}(1+\tau\chi_\ell(y))$.  The area
formula \cite[Section~3.3]{EvansGariepy} gives
\[
\Vol(\mathcal M_t(K,\beta))
=\int_\beta\int_0^t\prod_{\ell=1}^{n-1}(1+\tau\chi_\ell(y))\dd\tau
\dd\cH^{n-1}(y)
=\sum_{r=0}^{n-1}\frac{t^{r+1}}{r+1}\int_\beta e_r(\chi)\dd\cH^{n-1}.
\]
In \eqref{eq:local-steiner}, the power $t^{r+1}$ carries the coefficient
$\kappa_{r+1}\mathcal C_{n-1-r}(K,\beta)$.  Comparing coefficients gives
\eqref{eq:curvature-density}.
\end{proof}

We apply this to $K=K_n(c)$ and $\Omega=\Sigma_c^\circ$.  The exceptional
set $\Sigma_c\cap E$ is not charged by any curvature measure.  Indeed,
let $x\in \mathcal M_t(K_n(c),\Sigma_c\cap E)$
and $y=\Pi_{K_n(c)}(x)$.  Since $\partial K_n(c)$ is $C^1$,
$x-y\in N_{K_n(c)}(y)=\R_+\nu(y)$ (Section~\ref{sec:prelim}), so
$x=y+\tau\nu(y)$ with $\tau\ge0$.  If $y_i=0$, then
\[
x_i=y_i+\tau\nu_i(y)=\tau\frac{M'(0)}{\lVert g(y)\rVert}=0.
\]
Thus $\mathcal M_t(K_n(c),\Sigma_c\cap E)\subseteq E$, a Lebesgue null set, for
every $t\ge0$, and \eqref{eq:local-steiner} shows that
$\mathcal C_j(K_n(c),\Sigma_c\cap E)=0$ for all $j$.  Consequently
$V_j(K_n(c))=\mathcal C_j(K_n(c),\Sigma_c^\circ)$, and since $\Sigma_c^\circ$ is
relatively open and $\partial K_n(c)$ is $C^2$ near each of its points,
Lemma~\ref{lem-curvature-density} with $\beta=\Omega=\Sigma_c^\circ$ and
$r=k=n-1-j$ gives
\begin{equation}\label{eq:curvature-integral}
(n-j)\kappa_{n-j}V_j(K_n(c))
=\int_{\Sigma_c^\circ}e_k(\chi)\dd\cH^{n-1},
\qquad k=n-1-j.
\end{equation}
Since all principal curvatures are nonnegative, the integrand is
nonnegative, and the integral is finite because $V_j(K_n(c))$ is.
For $j=n-1$, \eqref{eq:curvature-integral} reads
$2V_{n-1}(K_n(c))=\cH^{n-1}(\Sigma_c^\circ)=\cH^{n-1}(\Sigma_c)$, in
agreement with $\mathcal C_{n-1}(K,\cdot)=\tfrac12\cH^{n-1}\llcorner\partial K$,
see \cite[Section~4.2]{Schneider}.  Here $\Sigma_c\cap E$ is
$\cH^{n-1}$-null, each $\Sigma_c\cap\{x_i=0\}$ being a $C^1$ hypersurface
of the coordinate hyperplane.

\subsection{Coarea and coordinate factorization}\label{sec:coarea}

\begin{proof}[Proof of Proposition~\ref{prop:product}]
Multiply \eqref{eq:curvature-integral} by $e^{-\lambda c}$ and integrate
in $c$.  For $x\notin E$, let $\chi(x)$ denote the curvatures of
$\Sigma_{F_n(x)}$ at $x$.  By Lemma~\ref{lem:curvature},
$e_k(\chi(x))$ is a continuous function of $(M'(x_i))_{i=1}^n$ and
$(M''(x_i))_{i=1}^n$ on the open set $\R^n\setminus E$, where all these
quantities are continuous and $\lVert g\rVert>0$.  In particular, it is
measurable there.  Apply coarea to the nonnegative measurable function
$\psi(x)=e^{-\lambda F_n(x)}e_k(\chi(x))$ off $E$, extended by zero on $E$.
\[
\int_{\R^n}\psi(x)\lVert\nabla F_n(x)\rVert\dd x
=\int_0^\infty\int_{\Sigma_c^\circ}\psi(x)\dd\cH^{n-1}(x)\dd c.
\]
Because $F_n$ is $C^1$, it is Lipschitz on each bounded ball, so the
coarea formula of Section~\ref{sec:prelim} applies on each ball, and
nonnegativity permits passage to $\R^n$ by monotone convergence.  The
factor
$\lVert\nabla F_n\rVert=\lVert g\rVert$ from coarea reduces the
curvature denominator in Lemma~\ref{lem:curvature} by one power.
\[
e_k(\chi)\lVert g\rVert
=\frac{\sum_{i=1}^ng_i^2e_k((h_\ell)_{\ell\ne i})}
{\lVert g\rVert^{k+2}}\lVert g\rVert
=\frac{\sum_{i=1}^ng_i^2e_k((h_\ell)_{\ell\ne i})}
{\lVert g\rVert^{k+1}}.
\]
We therefore obtain
\begin{equation}\label{eq:coarea}
(n-j)\kappa_{n-j}Q_{n,j}(\lambda)
=\int_{\R^n}
\frac{\sum_{i=1}^n g_i^2e_k((h_\ell)_{\ell\ne i})}
{\lVert g\rVert^{k+1}}e^{-\lambda F_n(x)}\dd x.
\end{equation}
The quotient is assigned the value zero on the Lebesgue null set $E$.

It remains to separate the coordinates in $\lVert g\rVert^{-(k+1)}$.
For $a,y>0$, the Gamma identity is
\[
y^{-a}=\frac1{\Gamma(a)}\int_0^\infty\theta^{a-1}e^{-\theta y}\dd\theta.
\]
For $x\notin E$, take $y=\lVert g\rVert^2$ and
$a=(k+1)/2>0$.  This gives
\[
\lVert g\rVert^{-(k+1)}
=\frac1{\Gamma((k+1)/2)}\int_0^\infty
\theta^{(k+1)/2-1}e^{-\theta\lVert g\rVert^2}\dd\theta.
\]
After substitution in \eqref{eq:coarea}, Tonelli permits interchange of the
$x$- and $\theta$-integrals, since $M''\ge0$ and every factor is nonnegative.
Thus the integral to evaluate for each $\theta$ is
\[
\sum_{i=1}^n\int_{\R^n}g_i^2e_k((h_\ell)_{\ell\ne i})
e^{-\lambda F_n(x)-\theta\lVert g(x)\rVert^2}\dd x.
\]
Both $F_n(x)=\sum_qM(x_q)$ and
$\lVert g(x)\rVert^2=\sum_qM'(x_q)^2$ are sums over coordinates.
For fixed $\theta,\lambda>0$, the exponential therefore factors as
\[
e^{-\lambda F_n(x)-\theta\lVert g(x)\rVert^2}
=\prod_{q=1}^n e^{-\lambda M(x_q)-\theta M'(x_q)^2}.
\]
For a fixed $i$, expand
\[
e_k((h_\ell)_{\ell\ne i})
=\sum_{\substack{I\subseteq\{1,\ldots,n\}\setminus\{i\}\\|I|=k}}
\prod_{\ell\in I}M''(x_\ell).
\]
To see the separation for one summand, write
$w(s)=e^{-\lambda M(s)-\theta M'(s)^2}$.  Tonelli gives
\begin{align*}
&\int_{\R^n}M'(x_i)^2
\left(\prod_{\ell\in I}M''(x_\ell)\right)
\prod_{q=1}^nw(x_q)\dd x\\
&\quad=\left(\int_\R M'(s)^2w(s)\dd s\right)
\left(\int_\R M''(s)w(s)\dd s\right)^k
\left(\int_\R w(s)\dd s\right)^{n-1-k}\\
&\quad=AB^kZ^{n-1-k}.
\end{align*}
Coordinate $i$ supplies $A$, the $k$ coordinates in $I$ supply $B^k$,
and the remaining $n-1-k=j$ coordinates supply $Z^j$.
There are $\binom{n-1}{k}$ choices of $I$, so
\begin{equation}\label{eq:coordinate-factorization}
\int_{\R^n}g_i^2e_k((h_\ell)_{\ell\ne i})
e^{-\lambda F_n-\theta\lVert g\rVert^2}\dd x
=\binom{n-1}{k}AB^kZ^{n-1-k}.
\end{equation}
All three integrals are evaluated at $(\theta,\lambda)$.
Tonelli justifies this factorization in the extended nonnegative sense.  Later,
Lemma~\ref{lem:endpoints} shows that $A$, $B$, and $Z$ are finite and
positive.  The right-hand side of \eqref{eq:coordinate-factorization}
does not depend on $i$, so summing over $i=1,\ldots,n$ contributes a
factor $n$.  Combining this with the Gamma representation gives
\begin{equation}\label{eq:product-before-constants}
(n-j)\kappa_{n-j}Q_{n,j}(\lambda)
=\frac{n\binom{n-1}{k}}{\Gamma((k+1)/2)}
\int_0^\infty\theta^{(k+1)/2-1}AB^kZ^{n-1-k}\dd\theta.
\end{equation}
Finally, replace $k$ by $n-1-j$ and use
$\binom{n-1}{k}=\binom{n-1}{j}$.  The definition of $\kappa_{n-j}$ and
$\Gamma(z+1)=z\Gamma(z)$ give
\[
(n-j)\kappa_{n-j}\Gamma((n-j)/2)
=\frac{(n-j)\pi^{(n-j)/2}\Gamma((n-j)/2)}
{\Gamma((n-j)/2+1)}
=2\pi^{(n-j)/2}.
\]
Dividing \eqref{eq:product-before-constants} by $(n-j)\kappa_{n-j}$
therefore gives \eqref{eq:product}, including its prefactor.
\end{proof}

The formula also explains the exponent in \eqref{eq:variational-exponent}.
As $j_n/n\to\alpha$, the powers of $Z$, $B$, and $\theta$ contribute
$\alpha\log Z$, $(1-\alpha)\log B$, and
$(1-\alpha)\log\theta/2$ after division by $n$.
The remaining factor $A/(\theta B)$ has no $n$-dependent exponent, and
the estimates of the next section justify this separation under the
integral.  Stirling's formula gives the prefactor contribution
$H(\alpha)-\frac{1-\alpha}{2}\log\pi$.

\section{Exponential rate of the radius transform}\label{sec:asymptotics}

We now determine the exponential rate of the integral in \eqref{eq:product}.
On compact $\theta$-intervals, the rate is governed by $\phi_\alpha$.
The estimates below control both endpoints and justify taking its supremum.

\subsection{One-dimensional estimates}

\begin{lemma}[Endpoint estimates]\label{lem:endpoints}
\begin{enumerate}[label=\textnormal{(\roman*)},nosep]
\item $Z$ is finite, positive, and jointly continuous on
$[0,\infty)\times(0,\infty)$, and $A$ and $B$ are finite, positive, and
jointly continuous on $(0,\infty)\times(0,\infty)$.  Moreover
$B(\theta,0)\le\sqrt{\pi/\theta}$ for every $\theta>0$.  The first two
$\lambda$-derivatives of $Z$ and $B$ are obtained by differentiation under
the integral sign and are jointly continuous on
$(0,\infty)\times(0,\infty)$.
\item For $\theta>0$ and $\lambda\ge0$,
\begin{equation}\label{eq:Bexact}
\sqrt\theta\,B(\theta,\lambda)
=\int_{-L\sqrt\theta}^{L\sqrt\theta}
e^{-\lambda M(\sigma(u/\sqrt\theta))-u^2}\dd u,
\end{equation}
the interval being $\R$ when $L=\infty$.  In $\theta$, this expression is
nondecreasing, at most $\sqrt\pi$, and tends to $\sqrt\pi$ as
$\theta\to\infty$.  If $\lambda>0$, it tends to $0$ as $\theta\downarrow0$.
If $L=\infty$ and $\lambda=0$, it is identically $\sqrt\pi$.
\item For $\theta,\lambda>0$, $Z(\theta,\lambda)$ is nonincreasing in
$\theta$ and
\begin{equation}\label{eq:thetaends}
A(\theta,\lambda)\le\frac{Z(0,\lambda)}{e\theta},
\qquad
Z(\theta,\lambda)\downarrow0\quad(\theta\to\infty).
\end{equation}
\item Fix $\alpha\in(0,1)$ and a compact interval $J\subset(0,\infty)$.
Then $\phiA(\theta,\lambda)\to-\infty$ as $\theta\downarrow0$ and as
$\theta\to\infty$, uniformly for $\lambda\in J$.  Consequently
$\PhiA(\lambda)$ is finite and attained for every $\lambda>0$, and for
$\lambda\in J$ all maximizers lie in one compact subinterval of
$(0,\infty)$.
\end{enumerate}
\end{lemma}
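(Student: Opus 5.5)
The proof runs through the four parts in order, and its core is the change of variables $u=\sqrt\theta\,M'(s)$ in $B$.

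\emph{Regularity of $M'$.} Under (M2), $M'$ is $C^1$ off $0$ and continuous at $0$. Hence it is locally absolutely continuous on $\R$, with derivative $M''\ge0$ almost everywhere. The one-dimensional substitution formula therefore applies to $u=\sqrt\theta M'(s)$ on $(0,\infty)$ and, by oddness, on $(-\infty,0)$. Its image is $(0,L\sqrt\theta)$ up to endpoints. On this image $s=\sigma(u/\sqrt\theta)$ for all $u$ outside the null set of values where the level set of $M'$ is a nondegenerate interval. On such a level interval $M''=0$, so those intervals contribute nothing to $B$. Moreover $M$ takes the same value on a level interval only if $M'=0$ there, which happens only at $s=0$; otherwise we only need $M(\sigma(\cdot))$ measurably defined almost everywhere. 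Together these give \eqref{eq:Bexact}.

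\emph{Part (ii).} The monotonicity in $\theta$ follows because both the domain $|u|<L\sqrt\theta$ and the integrand $e^{-\lambda M(\sigma(u/\sqrt\theta))}$ increase with $\theta$. The latter holds since $\sigma$ is nondecreasing, $|u/\sqrt\theta|$ decreases, and $M$ is even and increasing in $|s|$. The bound by $\sqrt\pi$ is immediate. As $\theta\to\infty$, monotone convergence applies with integrand tending to $e^{-u^2}$, because $\sigma(r)\to0$ as $r\to0$ and $M(0)=0$. For $\lambda>0$ and $\theta\downarrow0$, the cutoff gives $|u|\le L\sqrt\theta\to0$ when $L<\infty$. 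When $L=\infty$, $\sigma(u/\sqrt\theta)\to\infty$ for fixed $u\ne0$, so the integrand tends to $0$ and dominated convergence finishes. For $L=\infty$ and $\lambda=0$ the integral is exactly $\sqrt\pi$.

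\emph{Part (i).} Finiteness of $Z(\theta,\lambda)$ for $\lambda>0$ follows from the linear lower growth $M(s)\ge(M(a)/a)|s|$. Finiteness of $B$ follows from (ii), and $B(\theta,0)\le\sqrt{\pi/\theta}$ is part (ii) at $\lambda=0$. For $A$, use $xe^{-\theta x}\le 1/(e\theta)$ with $x=M'^2$, which gives $A\le Z(0,\lambda)/(e\theta)$; this is also the first claim of (iii). Positivity holds since all integrands are positive on sets of positive measure. For $B$, note that $M''$ cannot vanish almost everywhere, because $M'$ is nonconstant and locally absolutely continuous. Joint continuity follows by dominated convergence. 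On a neighborhood $\lambda\ge\lambda_0>0$ the dominating functions are $e^{-\lambda_0M}$ times $1$, $M'^2$, or $M''e^{-\theta_0M'^2}$, with polynomial factors $M^k$ for the $\lambda$-derivatives. These are integrable because $M^ke^{-\lambda_0M/2}$ is bounded, and because of (ii) together with Tonelli for the $M''$-weighted terms. For $Z$ at $\theta=0$, the dominating function is simply $e^{-\lambda_0 M}$.

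\emph{Part (iii).} $Z$ is nonincreasing in $\theta$ pointwise in the integrand. Its limit as $\theta\to\infty$ is $0$ by dominated convergence, since $M'(s)\ne0$ for $s\ne0$.

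\emph{Part (iv), the main step.} As $\theta\downarrow0$, we have $\log Z\le\log Z(0,\lambda)$ and $\log B\le\log\sqrt{\pi/\theta}$, which is too weak on its own. Instead we write
\[
\phiA=\tfrac{1-\alpha}{2}\log\theta+\alpha\log Z+(1-\alpha)\log B
=\alpha\log Z+(1-\alpha)\log\bigl(\sqrt\theta B\bigr).
\]
Here $\sqrt\theta B\to0$ by (ii), uniformly on $J$ because the expression is monotone in $\lambda$: use its value at $\lambda=\min J$. Since $Z\le Z(0,\min J)$, this gives $\phiA\to-\infty$. As $\theta\to\infty$, we have $\sqrt\theta B\le\sqrt\pi$ and $Z(\theta,\lambda)\le Z(\theta,\min J)\to0$ by (iii), so again $\phiA\to-\infty$. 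Since $\alpha,1-\alpha>0$, continuity of $\phiA$ from (i) gives that the supremum is finite, attained, and localized: pick $\theta_1<\theta_2$ such that $\phiA<\phiA(1,\lambda)-1$ outside $[\theta_1,\theta_2]$ for all $\lambda\in J$. This is possible because $\phiA(1,\cdot)$ is bounded below on $J$ by continuity.

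The main obstacle is the rigorous justification of the substitution in (ii) with a possibly non-injective $M'$ and a singular $M''$ at $0$. I would handle it through the area formula for the absolutely continuous monotone map $M'$ on each half-line.
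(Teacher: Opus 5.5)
Your proposal follows the paper's proof essentially step by step. It uses the substitution $r=M'(s)$, discarding the level intervals of $M'$ because $M''$ vanishes there, monotone and dominated convergence for (ii) and (iii), and the rewriting $\phiA=\alpha\log Z+(1-\alpha)\log(\sqrt\theta B)$ with evaluation at $\lambda=\min J$ for the uniform endpoint limits and the localization in (iv). Two steps need repair, the first of which fails as written.

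\textbf{Dominating function for $A$.} For joint continuity of $A$ you dominate its integrand by $M'^2e^{-\lambda_0M}$, and this need not be integrable under (M1)--(M2). Substituting $t=M(s)$ on $(0,\infty)$ gives
$\int_0^\infty M'^2e^{-\lambda M}\dd s=\int_0^\infty w(t)e^{-\lambda t}\dd t$ with $w=M'\circ v$. Conversely, every continuous nondecreasing $w$ that is $C^1$ on $(0,\infty)$, with $w(0)=0$, $\int_0\dd t/w<\infty$ and $\int^\infty\dd t/w=\infty$, arises from an admissible $M$: take $v(t)=\int_0^t\dd\tau/w(\tau)$ and $M=v^{-1}$. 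Choose $w$ equal to $e^{T_k^2}$ on disjoint plateaus $[T_k,T_k+e^{T_k^2}]$ with $T_k\to\infty$. Each plateau then contributes $1$ to $\int\dd t/w$ and at least $e^{T_k^2-\lambda T_k}(1-e^{-\lambda})/\lambda$ to $\int we^{-\lambda t}\dd t$. So the latter integral is infinite for every $\lambda>0$, which is why the lemma asserts finiteness of $A$ only for $\theta>0$. The fix is the inequality you already use for the bound on $A$. For $\theta\ge\theta_0$,
$M'^2e^{-\lambda M-\theta M'^2}\le e^{-\lambda_0M}/(e\theta_0)$, and this is the paper's dominating function.

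\textbf{Absolute continuity of $M'$.} Being $C^1$ off $0$ and continuous at $0$ does not by itself make $M'$ locally absolutely continuous. You need $M''\ge0$ and monotone convergence in $M'(s)-M'(t)=\int_t^sM''$ as $t\downarrow0$, as the paper does.
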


\begin{proof}
\emph{Absolute continuity of $M'$.}
For $0<t<s$, $M'$ is $C^1$ on $[t,s]$, so
$M'(s)-M'(t)=\int_t^sM''(u)\dd u$.  Letting $t\downarrow0$, continuity of
$M'$ and monotone convergence ($M''\ge0$) give
\[
M'(s)=M'(0)+\int_0^sM''(u)\dd u\qquad(s\ge0),
\]
and similarly for $s\le0$.  Hence $M''$ is locally integrable, even at the
origin, and $M'$ is locally absolutely continuous on $\R$.  Moreover
$M'(0)=0$, because $M'$ is odd.

\emph{The substitution $r=M'(s)$.}
Since $M'$ is nondecreasing and locally absolutely continuous, the
one-dimensional area formula \cite[Section~3.3]{EvansGariepy} shows that
the image of the measure
$M''(s)\dd s$ under $M'$ is Lebesgue measure on $(-L,L)$.  For all but at
most countably many $r$, the level set $\{s:M'(s)=r\}$ is the singleton
$\{\sigma(r)\}$.  On every non-singleton level interval, $M''$ vanishes
in the interior, while the endpoints of these intervals form a countable
null set.  Hence
$\int_\R\psi(M(s),M'(s))M''(s)\dd s=\int_{-L}^L\psi(M(\sigma(r)),r)\dd r$
for every nonnegative measurable $\psi$.  In particular,
\begin{equation}\label{eq:B-subst}
B(\theta,\lambda)=\int_{-L}^L e^{-\lambda M(\sigma(r))-\theta r^2}\dd r
\qquad(\theta,\lambda\ge0).
\end{equation}

\emph{(i) Integrability and parameter regularity.}
Fix $J=[\lambda_-,\lambda_+]\subset(0,\infty)$ and $\theta_0>0$.  The
linear lower growth of $M$ gives $\int_\R e^{-\eta M}<\infty$ for every
$\eta>0$, and \eqref{eq:B-subst} with $\lambda=0$ gives
$B(\theta_0,0)=\int_\R M''e^{-\theta_0M'^2}\le\sqrt{\pi/\theta_0}$, which
is the stated bound on $B(\theta,0)$.  Since
$\sup_{u\ge0}u^qe^{-\lambda_-u/2}<\infty$, $e^{-\lambda_-M/2}\le1$, and
$xe^{-\theta_0x}\le1/(e\theta_0)$ for $x\ge0$, we have, for $q=0,1,2$,
$\lambda\in J$, and $\theta\ge\theta_0$,
\begin{gather*}
M^qe^{-\lambda M}\le C_qe^{-\lambda_-M/2},\qquad
M^qM''e^{-\lambda M-\theta M'^2}\le C_qM''e^{-\theta_0M'^2},\\
M^qM'^2e^{-\lambda M-\theta M'^2}\le\frac{C_q}{e\theta_0}e^{-\lambda_-M/2}.
\end{gather*}
Since $e^{-\theta M'^2}\le1$, the first bound also dominates the integrands
of $Z$ and of its first two $\lambda$-derivatives for all $\theta\ge0$.
The other two bounds dominate, for $\theta\ge\theta_0$, the integrands of
$B$, $A$, and the first two $\lambda$-derivatives of $B$, which carry the
additional factors $-M$ and $M^2$.  Dominated convergence therefore proves
the finiteness, joint continuity, and differentiability claims.
Positivity of $Z$ and $A$ follows from $M'(s)\ne0$ for $s\ne0$, and
positivity of $B$ from \eqref{eq:B-subst}.

\emph{(ii) The function $\sqrt\theta B$.}
For $\theta>0$, the substitution $r=u/\sqrt\theta$ in \eqref{eq:B-subst}
gives \eqref{eq:Bexact}.  Extend its integrand by zero outside
$(-L\sqrt\theta,L\sqrt\theta)$.  The function
$r\mapsto M(\sigma(r))$ is even, nondecreasing in $|r|$, and tends to
$0$ as $r\to0$, because $\sigma$ is monotone and $M'(s)\ne0$ for $s\ne0$
force $\sigma(r)\to0$.  The extended integrand is therefore
nondecreasing in $\theta$, bounded by $e^{-u^2}$, and converges to
$e^{-u^2}$ as $\theta\to\infty$.  Monotone convergence proves the stated
properties at infinity.  When $L=\infty$ and $\lambda=0$, the integrand is
already $e^{-u^2}$.  If $\lambda>0$ and $u\ne0$, the extended integrand
tends to $0$ as $\theta\downarrow0$.  Indeed, it vanishes once $|u|\ge L\sqrt\theta$
when $L<\infty$, and $M(\sigma(u/\sqrt\theta))\to\infty$ when $L=\infty$.
Dominated convergence gives $\sqrt\theta B(\theta,\lambda)\to0$.

\emph{(iii) The bounds for $A$ and $Z$.}
Applying $xe^{-\theta x}\le1/(e\theta)$, valid for $x\ge0$, with
$x=M'(s)^2$ gives
\[
A(\theta,\lambda)
\le\frac1{e\theta}\int_\R e^{-\lambda M(s)}\dd s
=\frac{Z(0,\lambda)}{e\theta}.
\]
The integrand of $Z$ is dominated by $e^{-\lambda M}$, is nonincreasing
in $\theta$, and tends to $0$ for every $s\ne0$, because $M'(s)\ne0$
there.  Hence $Z$ is nonincreasing in $\theta$, and dominated convergence
gives $Z(\theta,\lambda)\downarrow0$ as $\theta\to\infty$.

\emph{(iv) Endpoint behavior and attainment.}
For $\theta>0$ we may rewrite $\phiA$ and use that $Z$ and $B$ are
nonincreasing in $\lambda$, which gives
\[
\phiA(\theta,\lambda)
=\alpha\log Z(\theta,\lambda)+(1-\alpha)\log\bigl(\sqrt\theta\,B(\theta,\lambda)\bigr)
\le\alpha\log Z(\theta,\lambda_-)
+(1-\alpha)\log\bigl(\sqrt\theta\,B(\theta,\lambda_-)\bigr).
\]
As $\theta\downarrow0$, the first term is at most $\alpha\log Z(0,\lambda_-)$
and the second tends to $-\infty$, since $\alpha<1$ and
$\sqrt\theta B(\theta,\lambda_-)\to0$.  As $\theta\to\infty$, the second
term is at most $\frac{1-\alpha}{2}\log\pi$ and the first tends to
$-\infty$, since $\alpha>0$ and $Z(\theta,\lambda_-)\to0$.  Hence
$\phiA(\theta,\lambda)\to-\infty$ at both endpoints, uniformly for
$\lambda\in J$.  Put
$\tau_J=\min_{\lambda\in J}\phiA(1,\lambda)$, which is finite by
continuity.  The uniform endpoint limits allow $0<\theta_-<1<\theta_+$
such that $\phiA(\theta,\lambda)<\tau_J-1$ whenever
$\theta\notin[\theta_-,\theta_+]$ and $\lambda\in J$.  These values are
strictly below the available value $\phiA(1,\lambda)\ge \tau_J$.  For each
$\lambda\in J$, continuity now gives a maximum on $[\theta_-,\theta_+]$,
which is also the global maximum.
The same strict comparison shows that no maximizer lies outside this
common compact interval.
\end{proof}

The maximizer in Lemma~\ref{lem:endpoints} is in fact unique.  The
argument is that of \cite[Proposition~2.12]{KabluchkoMarynychProchno},
and rests on an integration-by-parts identity between three of the
one-dimensional integrals.

\begin{lemma}[Inner maximizer]\label{lem:inner}
Fix $\alpha\in(0,1)$.  For $\theta,\lambda>0$ put
\[
D(\theta,\lambda)=\int_{\R}M'(s)^2M''(s)
e^{-\lambda M(s)-\theta M'(s)^2}\dd s.
\]
Then $D$ is finite, $Z_\theta=-A$, $B_\theta=-D$, and
\begin{equation}\label{eq:ABC}
B=\lambda A+2\theta D.
\end{equation}
For every $\lambda>0$, the function $\theta\mapsto\phiA(\theta,\lambda)$
has a unique maximizer $\theta_*(\lambda)\in(0,\infty)$.  It is the unique
solution of
\begin{equation}\label{eq:inner-stationarity}
2\alpha\theta B(\theta,\lambda)=(1-\alpha)\lambda Z(\theta,\lambda),
\end{equation}
and $\lambda\mapsto\theta_*(\lambda)$ is continuous.  In particular,
$\PhiA(\lambda)=\phiA(\theta_*(\lambda),\lambda)$.
\end{lemma}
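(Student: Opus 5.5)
The plan is to compute the $\theta$-derivative of $\phiA$, use \eqref{eq:ABC} to simplify it, and show that its sign is governed by a strictly decreasing function of $\theta$. Attainment is already supplied by Lemma~\ref{lem:endpoints}(iv).

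\emph{Step 1: finiteness of $D$ and the two $\theta$-derivatives.} For finiteness of $D$, the substitution \eqref{eq:B-subst} (with $\psi(m,r)=r^2e^{-\lambda m-\theta r^2}$) gives
\[
D(\theta,\lambda)\le\int_{-L}^{L}r^2e^{-\theta r^2}\dd r<\infty .
\]
For the derivatives, fix $\theta_0>0$ and restrict to $\theta\ge\theta_0$. Differentiating $Z$ and $B$ under the integral sign brings down the factor $-M'^2$. The resulting integrands are dominated by $M'^2e^{-\theta_0M'^2}e^{-\lambda M}\le(e\theta_0)^{-1}e^{-\lambda M}$ and by $M'^2M''e^{-\theta_0M'^2}$, both integrable. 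This is the same dominated-convergence scheme as in Lemma~\ref{lem:endpoints}(i), and it yields $Z_\theta=-A$ and $B_\theta=-D$, with $D$ continuous.

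\emph{Step 2: the identity \eqref{eq:ABC}.} Put $w=e^{-\lambda M-\theta M'^2}$. Since $M$ and $M'$ are locally absolutely continuous (Lemma~\ref{lem:endpoints}, first step), so are $w$ and $M'w$, and
\[
w'=-(\lambda M'+2\theta M'M'')w\quad\text{a.e.}
\]
Integrating $(M'w)'=M''w+M'w'$ over $[-R,R]$ gives
\[
\int_{-R}^{R}M''w=\bigl[M'w\bigr]_{-R}^{R}+\int_{-R}^{R}\bigl(\lambda M'^2+2\theta M'^2M''\bigr)w .
\]
The boundary term vanishes as $R\to\infty$: the factor $|M'|e^{-\theta M'^2}$ is bounded, and $e^{-\lambda M}\to0$. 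Monotone convergence on the right then gives $B=\lambda A+2\theta D$. I expect this justification (absolute continuity of the product, and the boundary terms when $L<\infty$ or when $M''$ is singular at $0$) to be the only delicate point, and it is handled by the absolute continuity established in Lemma~\ref{lem:endpoints}.

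\emph{Step 3: sign of the derivative and uniqueness.} By Steps 1--2,
\[
\partial_\theta\phiA=\frac{1-\alpha}{2\theta}-\alpha\frac AZ-(1-\alpha)\frac DB .
\]
Substituting $D=(B-\lambda A)/(2\theta)$ cancels the first term and gives
\[
\partial_\theta\phiA=\frac{A}{2\theta BZ}\,h(\theta),\qquad h(\theta)=(1-\alpha)\lambda Z(\theta,\lambda)-2\alpha\theta B(\theta,\lambda).
\]
Since $A,B,Z>0$, the sign of $\partial_\theta\phiA$ is the sign of $h$. By Lemma~\ref{lem:endpoints}(iii), $Z$ is nonincreasing in $\theta$. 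By Lemma~\ref{lem:endpoints}(ii), $\theta B=\sqrt\theta\cdot\sqrt\theta B$ is a product of a strictly increasing positive factor and a nondecreasing positive factor, hence strictly increasing. Thus $h$ is continuous and strictly decreasing.

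At any maximizer, which exists by Lemma~\ref{lem:endpoints}(iv) and is interior, we have $h=0$. By strict monotonicity $h$ has exactly one zero $\theta_*(\lambda)$, with $\phiA$ increasing before it and decreasing after it. Hence $\theta_*(\lambda)$ is the unique maximizer, it is characterized by \eqref{eq:inner-stationarity}, and $\PhiA(\lambda)=\phiA(\theta_*(\lambda),\lambda)$.

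\emph{Step 4: continuity of $\theta_*$.} Let $\lambda_k\to\lambda$ within a compact $J\subset(0,\infty)$. By Lemma~\ref{lem:endpoints}(iv), all $\theta_*(\lambda_k)$ lie in a fixed compact subinterval of $(0,\infty)$. Every limit point of $\theta_*(\lambda_k)$ satisfies \eqref{eq:inner-stationarity} at $\lambda$, by the joint continuity of $Z$ and $B$. By uniqueness of the zero of $h$, every limit point equals $\theta_*(\lambda)$, so $\theta_*(\lambda_k)\to\theta_*(\lambda)$.
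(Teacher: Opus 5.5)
Your proof is correct and has the same skeleton as the paper's: finiteness of $D$ via $r=M'(s)$, then the identity \eqref{eq:ABC}, then reducing the sign of $\partial_\theta\phiA$ to that of one scalar function, and finally compactness from Lemma~\ref{lem:endpoints}(iv) for continuity.

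Two steps go differently. For \eqref{eq:ABC}, you integrate $(M'w)'$ over $[-R,R]$, using local absolute continuity of $M'w$ through the origin. You then remove the boundary term in one stroke via $\sup_r|r|e^{-\theta r^2}<\infty$ and $M(R)\to\infty$. The paper instead cuts out $[-\varepsilon,\varepsilon]$ and treats $L<\infty$ and $L=\infty$ separately, so your version is slightly cleaner.

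The more substantive difference is in the uniqueness step. The paper shows $\gamma_\lambda=\theta B/Z$ is strictly increasing by differentiating it, which uses \eqref{eq:ABC} a second time. It then computes $\gamma_\lambda\to0$ and $\gamma_\lambda\to\infty$ at the endpoints, so $\gamma_\lambda$ is a bijection of $(0,\infty)$ and the root of \eqref{eq:inner-stationarity} exists without appeal to attainment. You get strict monotonicity of $h=(1-\alpha)\lambda Z-2\alpha\theta B$ directly from the monotonicity of $Z$ and of $\sqrt\theta B$ already recorded in Lemma~\ref{lem:endpoints}(ii)--(iii), via $\theta B=\sqrt\theta\cdot\sqrt\theta B$. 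You obtain the zero of $h$ from the interior maximizer of Lemma~\ref{lem:endpoints}(iv).

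Your route avoids the extra derivative and the endpoint limits. Relying on (iv) costs nothing, since both arguments need it for the continuity step anyway. The paper's route gives the marginally stronger, self-contained fact that $\gamma_\lambda$ is onto.

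For continuity, you pass to the limit in the stationarity equation rather than in the maximization property; given the compactness from (iv), the two are equivalent.
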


\begin{proof}
\emph{The identity \eqref{eq:ABC}.}
The substitution $r=M'(s)$ from Lemma~\ref{lem:endpoints} gives
\[
D(\theta,\lambda)
=\int_{-L}^{L}r^2e^{-\lambda M(\sigma(r))-\theta r^2}\dd r
\le\int_{\R}r^2e^{-\theta r^2}\dd r<\infty.
\]
The product rule gives, away from zero,
\begin{align*}
\frac{\dd}{\dd s}\left(M'e^{-\lambda M-\theta M'^2}\right)
&=M''e^{-\lambda M-\theta M'^2}
-M'(\lambda M'+2\theta M'M'')e^{-\lambda M-\theta M'^2}\\*
&=(M''-\lambda M'^2-2\theta M'^2M'')
e^{-\lambda M-\theta M'^2}.
\end{align*}
The three terms on the right integrate to $B$, $-\lambda A$, and
$-2\theta D$, respectively.  Apply the fundamental theorem on
$[-R,-\varepsilon]\cup[\varepsilon,R]$.  The boundary terms at $\pm R$
vanish as $R\to\infty$.  If $L<\infty$, this follows from boundedness of
$M'$ and $M(s)\to\infty$.  If $L=\infty$, it follows from
$|M'|e^{-\lambda M-\theta M'^2}\le |M'|e^{-\theta M'^2}\to0$.
The boundary terms at $\pm\varepsilon$ vanish as $\varepsilon\downarrow0$ because
$M'(\pm\varepsilon)\to0$, and the integrability of $A,B,D$ justifies both
limits.  Since the integral of the derivative is zero, \eqref{eq:ABC}
follows.

\emph{Derivatives in $\theta$.}
Fix $\theta_0>0$.  On $\theta\in[\theta_0/2,3\theta_0/2]$, the absolute
derivatives of the integrands of $Z$ and $B$ are dominated by
$2e^{-\lambda M(s)}/(e\theta_0)$ and, after the substitution $r=M'(s)$,
by $r^2e^{-\theta_0r^2/2}$.  Both bounds are integrable, so
differentiation under the integral sign gives $Z_\theta=-A$ and
$B_\theta=-D$.  Substituting these into the derivative of
\eqref{eq:variational-exponent} and using
$D=(B-\lambda A)/(2\theta)$ from \eqref{eq:ABC},
\begin{align*}
\partial_\theta\phiA
&=\frac{1-\alpha}{2\theta}
+\alpha\frac{Z_\theta}{Z}+(1-\alpha)\frac{B_\theta}{B}
=\frac{1-\alpha}{2\theta}-\alpha\frac AZ
-\frac{1-\alpha}{2\theta B}(B-\lambda A)\\
&=A\left(\frac{(1-\alpha)\lambda}{2\theta B}-\frac\alpha Z\right)
=\frac{\alpha A}{Z}\left(\frac{\xi_\lambda}{\gamma_\lambda(\theta)}-1\right),
\end{align*}
where
\[
\xi_\lambda=\frac{(1-\alpha)\lambda}{2\alpha},
\qquad
\gamma_\lambda(\theta)=\frac{\theta B(\theta,\lambda)}{Z(\theta,\lambda)}.
\]
Since $A,Z>0$, the sign of $\partial_\theta\phiA(\theta,\lambda)$ is the
sign of $\xi_\lambda-\gamma_\lambda(\theta)$.

\emph{Monotonicity of $\gamma_\lambda$.}
By $Z_\theta=-A$, $B_\theta=-D$, and \eqref{eq:ABC},
\[
\gamma_\lambda'(\theta)
=\frac{B-\theta D}{Z}+\frac{\theta AB}{Z^2}
=\frac{B+\lambda A}{2Z}+\frac{\theta AB}{Z^2}>0.
\]
By Lemma~\ref{lem:endpoints}, $\gamma_\lambda$ is continuous on $(0,\infty)$,
and in the factorization
\[
\gamma_\lambda(\theta)
=\sqrt\theta\,\frac{\sqrt\theta\,B(\theta,\lambda)}{Z(\theta,\lambda)}
\]
we have $\sqrt\theta B\le\sqrt\pi$ and $Z(\theta,\lambda)\to Z(0,\lambda)>0$
as $\theta\downarrow0$, while $\sqrt\theta B\to\sqrt\pi$ and $Z\to0$ as
$\theta\to\infty$.  Hence $\gamma_\lambda(\theta)\to0$ as $\theta\downarrow0$
and $\gamma_\lambda(\theta)\to\infty$ as $\theta\to\infty$.  Thus $\gamma_\lambda$ is an
increasing bijection of
$(0,\infty)$ onto itself, and $\gamma_\lambda(\theta)=\xi_\lambda$, which is
\eqref{eq:inner-stationarity}, has exactly one solution
$\theta_*(\lambda)$.  The function $\phiA(\cdot,\lambda)$ is strictly
increasing on $(0,\theta_*(\lambda))$ and strictly decreasing on
$(\theta_*(\lambda),\infty)$, so $\theta_*(\lambda)$ is its unique
maximizer.

\emph{Continuity.}
Let $\lambda_i\to\lambda$.  By Lemma~\ref{lem:endpoints} the
maximizers $\theta_*(\lambda_i)$ lie in a compact subinterval of
$(0,\infty)$, and joint continuity of $\phiA$ shows that every limit
point of $(\theta_*(\lambda_i))$ maximizes $\phiA(\cdot,\lambda)$, hence
equals $\theta_*(\lambda)$.  Thus $\theta_*$ is continuous.
\end{proof}

\subsection{A direct Laplace principle}

Set $\alpha_n=j_n/n\in(0,1)$ for all sufficiently large $n$, and write
the integral in \eqref{eq:product} as
\begin{equation}\label{eq:In-exponent-form}
\begin{aligned}
I_n(\lambda)
&=\int_0^\infty\theta^{(n-j_n)/2-1}Z^{j_n}B^{n-1-j_n}A\dd\theta\\
&=\int_0^\infty\frac{A(\theta,\lambda)}{\theta B(\theta,\lambda)}
e^{n\phi_{\alpha_n}(\theta,\lambda)}\dd\theta.
\end{aligned}
\end{equation}
Indeed, $e^{n\phi_{\alpha_n}}=
\theta^{(n-j_n)/2}Z^{j_n}B^{n-j_n}$, so division by $\theta B$
recovers the exact powers in the first integral.  This leaves a factor
$A/(\theta B)$ that does not depend on $n$.

\begin{proposition}[Radius-transform rate]\label{prop:transform-rate}
Let $j_n\in\{0,\ldots,n-1\}$ satisfy
$j_n/n\to\alpha\in(0,1)$, and let $I_n(\lambda)$ be defined by
\eqref{eq:In-exponent-form}.  Then, for every fixed $\lambda>0$,
\begin{equation}\label{eq:Inrate}
\lim_{n\to\infty}\frac1n\log I_n(\lambda)=\PhiA(\lambda).
\end{equation}
Consequently, with $\Lambda_n(\lambda)=n^{-1}\log Q_{n,j_n}(\lambda)$,
\begin{equation}\label{eq:transform-rate}
\Lambda_n(\lambda)\longrightarrow
\Lambda(\lambda)=H(\alpha)-\frac{1-\alpha}{2}\log\pi+\PhiA(\lambda).
\end{equation}
\end{proposition}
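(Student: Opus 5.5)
We prove \eqref{eq:Inrate} by matching lower and upper bounds, and then obtain \eqref{eq:transform-rate} from Proposition~\ref{prop:product} and Stirling's formula.

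\emph{Lower bound.} Fix $\lambda>0$ and let $\theta_*=\theta_*(\lambda)$ be the maximizer from Lemma~\ref{lem:inner}. By Lemma~\ref{lem:endpoints}(i), $A$, $B$, $Z$ are continuous and positive near $\theta_*$. For small $\delta>0$, the prefactor $A/(\theta B)$ is therefore bounded below on $U_\delta=[\theta_*-\delta,\theta_*+\delta]$ by a constant $c_\delta>0$. The map $(\beta,\theta)\mapsto\phi_\beta(\theta,\lambda)$ is affine in $\beta$ with coefficients $\log Z$, $\log B$, $\log\theta$, and these are bounded on $U_\delta$. Hence $\phi_{\alpha_n}\to\phiA$ uniformly on $U_\delta$, and continuity of $\phiA(\cdot,\lambda)$ gives
\[
\frac1n\log I_n\ge\frac1n\log(2\delta c_\delta)+\min_{U_\delta}\phi_{\alpha_n}\longrightarrow\min_{U_\delta}\phiA(\cdot,\lambda).
\]
Letting $\delta\downarrow0$ yields $\liminf n^{-1}\log I_n\ge\PhiA(\lambda)$.

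\emph{Upper bound.} This is the main obstacle: the integral runs over all of $(0,\infty)$, and both the prefactor and the $n$-dependence must be controlled near $0$ and $\infty$. We work with the first form of \eqref{eq:In-exponent-form}. By Lemma~\ref{lem:endpoints}(iii), $A\le Z(0,\lambda)/(e\theta)$. By Lemma~\ref{lem:endpoints}(ii), $\sqrt\theta B\le\sqrt\pi$, and $Z\le Z(0,\lambda)$. With $b=\sqrt\theta B$, the integrand is
\[
\theta^{-1}b^{\,n-1-j_n}Z^{j_n}A\cdot\theta^{1/2}
\le\frac{C}{\sqrt\theta}\,b^{\,n-1-j_n}Z^{j_n}.
\]
We split $(0,\infty)$ into three pieces $(0,\theta_-)$, $[\theta_-,\theta_+]$, and $(\theta_+,\infty)$, with $\theta_\pm$ chosen as follows.

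On $(0,\theta_-)$: since $b\to0$ as $\theta\downarrow0$, we may take $\theta_-$ so small that $b\le\varepsilon$ there. The bounds $Z^{j_n}\le Z(0,\lambda)^{j_n}$ and $\int_0^{\theta_-}\theta^{-1/2}\dd\theta<\infty$ then give the exponential rate at most $(1-\alpha)\log\varepsilon+\alpha\log Z(0,\lambda)$, which is below $\PhiA(\lambda)$ once $\varepsilon$ is small.

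On $(\theta_+,\infty)$: we use $b\le\sqrt\pi$ and $Z\downarrow0$. To make the integral converge, we write $Z^{j_n}\le Z(\theta_+,\lambda)^{j_n-1}Z(\theta,\lambda)$ and $A\le C/\theta$, so that what remains is $\int_{\theta_+}^\infty\theta^{-3/2}Z\,\dd\theta<\infty$. The rate is then at most $\alpha\log Z(\theta_+,\lambda)+\frac{1-\alpha}2\log\pi$, which is below $\PhiA(\lambda)$ for large $\theta_+$.

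On $[\theta_-,\theta_+]$: the prefactor is bounded, and $\phi_{\alpha_n}\to\phiA$ uniformly there, so the rate is at most $\sup\phiA(\cdot,\lambda)=\PhiA(\lambda)$.

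Combining the three pieces with $\limsup n^{-1}\log(a_n+b_n+c_n)=\max$ of the individual $\limsup$'s gives $\limsup n^{-1}\log I_n\le\PhiA(\lambda)$.

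\emph{Conclusion.} By Proposition~\ref{prop:product}, $\Lambda_n(\lambda)=n^{-1}\log I_n(\lambda)$ plus $n^{-1}$ times the logarithm of the prefactor $n\binom{n-1}{j_n}/\bigl(2\pi^{(n-j_n)/2}\bigr)$. Stirling's formula gives $n^{-1}\log\binom{n-1}{j_n}\to H(\alpha)$. The terms $n^{-1}\log n$ and $n^{-1}\log 2$ vanish, and $-\frac{n-j_n}{2n}\log\pi\to-\frac{1-\alpha}2\log\pi$. Together with \eqref{eq:Inrate}, this yields \eqref{eq:transform-rate}.
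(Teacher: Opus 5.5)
Your lower bound, the middle interval, the tail $(\theta_+,\infty)$, and the Stirling step are sound and match the paper. The gap is on $(0,\theta_-)$. Your displayed inequality $\theta^{-1/2}b^{\,n-1-j_n}Z^{j_n}A\le C\theta^{-1/2}b^{\,n-1-j_n}Z^{j_n}$ requires $A(\theta,\lambda)$ to stay bounded as $\theta\downarrow0$. The estimate you cite, $A\le Z(0,\lambda)/(e\theta)$, gives only $C\theta^{-3/2}b^{\,n-1-j_n}Z^{j_n}$, and $\int_0^{\theta_-}\theta^{-3/2}\,\mathrm d\theta=\infty$. You use the $C/\theta$ form yourself on $(\theta_+,\infty)$.

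A uniform bound on $A$ is not available in general. By monotone convergence, $A(\theta,\lambda)\uparrow\int_\R M'(s)^2e^{-\lambda M(s)}\,\mathrm ds$ as $\theta\downarrow0$. Since \textnormal{(M1)--(M2)} impose no growth restriction on $M'$ relative to $M$, this integral can be $+\infty$ for every $\lambda>0$. For instance, smooth the nondecreasing step function $M'=a_k$ on $[k,k+1)$ with $a_k=\exp(M(k)^2)$. The interval $[k,k+1]$ then contributes about $a_ke^{-\lambda M(k)}/\lambda$, which is unbounded in $k$. So no pointwise bound on the integrand near $\theta=0$ can work; you need an integrated bound on $A$ there.

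That integrated bound is exactly the extra idea in the paper. It writes the integrand as $Z^{j_n}(\sqrt\theta B)^{k_n-1}AB$ and proves $\int_0^\infty AB\,\mathrm d\theta\le2\pi/\lambda$ by Tonelli and the substitution $r=M'(t)$. In your formulation the same device is even shorter. Tonelli together with $\int_0^\infty\theta^{-1/2}e^{-\theta y}\,\mathrm d\theta=\sqrt{\pi/y}$ gives
\[
\int_0^\infty\theta^{-1/2}A(\theta,\lambda)\,\mathrm d\theta=\sqrt\pi\int_\R|M'(s)|e^{-\lambda M(s)}\,\mathrm ds=\frac{2\sqrt\pi}{\lambda}.
\]
Hence the piece over $(0,\theta_-)$ is at most $\varepsilon^{\,n-1-j_n}Z(0,\lambda)^{j_n}\cdot 2\sqrt\pi/\lambda$. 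This yields exactly the rate $(1-\alpha)\log\varepsilon+\alpha\log Z(0,\lambda)$ you claimed, and with this step supplied the rest of your argument goes through.
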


\begin{proof}
Fix $\lambda>0$.  The difference of the exponents has the exact form
\[
\begin{aligned}
&\phi_{\alpha_n}(\theta,\lambda)-\phiA(\theta,\lambda)\\
&\quad=(\alpha_n-\alpha)
\left(-\frac12\log\theta+\log Z(\theta,\lambda)-\log B(\theta,\lambda)\right).
\end{aligned}
\]
On each compact interval $U\subset(0,\infty)$, the expression in
parentheses is continuous and bounded.  Hence
$\sup_U|\phi_{\alpha_n}-\phiA|\le C_U|\alpha_n-\alpha|\to0$.
The factor
$\Delta(\theta)=A(\theta,\lambda)/(\theta B(\theta,\lambda))$ is continuous
and positive.

\emph{Lower bound.}
Given $\varepsilon>0$, let $\theta_*$ be a maximizer of
$\phiA(\cdot,\lambda)$ (Lemma~\ref{lem:endpoints}) and choose a
closed interval $U\subset(0,\infty)$ of positive length around it such
that $\phiA\ge\PhiA(\lambda)-\varepsilon$ on $U$.  Continuity and
positivity of $\Delta$ give $d_U:=\min_U \Delta>0$.
Uniform convergence gives $\phi_{\alpha_n}\ge
\PhiA(\lambda)-2\varepsilon$ on $U$ for all large $n$, and therefore
\[
I_n(\lambda)\ge |U|d_U\,
e^{n(\PhiA(\lambda)-2\varepsilon)}.
\]
Taking logarithms yields
\[
\frac1n\log I_n(\lambda)
\ge\PhiA(\lambda)-2\varepsilon+\frac{\log(|U|d_U)}n.
\]
The final term tends to zero because $U$ and $d_U$ are fixed.
Thus $\liminf_n n^{-1}\log I_n(\lambda)\ge
\PhiA(\lambda)-2\varepsilon$.  Letting $\varepsilon\downarrow0$ gives
the lower bound.

\emph{Upper bound near zero.}
We first bound $\int_0^\infty AB\dd\theta$.  By Tonelli and
$\int_0^\infty e^{-\theta y}\dd\theta=1/y$,
\[
\int_0^\infty A(\theta,\lambda)B(\theta,\lambda)\dd\theta
=\int_\R\int_\R\frac{M'(s)^2M''(t)}{M'(s)^2+M'(t)^2}
e^{-\lambda M(s)-\lambda M(t)}\dd t\dd s.
\]
For fixed $s\ne0$, the substitution $r=M'(t)$ of
Lemma~\ref{lem:endpoints} and $e^{-\lambda M(t)}\le1$ bound the inner
integral by
$M'(s)^2e^{-\lambda M(s)}\int_\R(M'(s)^2+r^2)^{-1}\dd r
=\pi|M'(s)|e^{-\lambda M(s)}$.  Since
$\int_\R|M'|e^{-\lambda M}\dd s=2/\lambda$, we obtain
\begin{equation}\label{eq:AB-integral}
\int_0^\infty A(\theta,\lambda)B(\theta,\lambda)\dd\theta
\le\frac{2\pi}\lambda.
\end{equation}
Now put $k_n=n-1-j_n$, which is at least $1$ for all large $n$.  The
integrand in the first line of \eqref{eq:In-exponent-form} equals
\[
\theta^{(k_n-1)/2}Z^{j_n}B^{k_n}A
=Z^{j_n}\bigl(\sqrt\theta B\bigr)^{k_n-1}AB.
\]
By Lemma~\ref{lem:endpoints}, $Z$ is nonincreasing in $\theta$ and
$\sqrt\theta B$ is nondecreasing.  For $0<a\le1$, put $Z_0=Z(0,\lambda)$ and
$\omega(a)=\sqrt a\,B(a,\lambda)$.  Then \eqref{eq:AB-integral} gives
\[
I_n^{-}(a):=\int_0^a\theta^{(n-j_n)/2-1}Z^{j_n}B^{n-1-j_n}A\dd\theta
\le\frac{2\pi}\lambda Z_0^{j_n}\omega(a)^{k_n-1}.
\]
Since $j_n/n\to\alpha$ and $(k_n-1)/n\to1-\alpha$, it follows that
\begin{equation}\label{eq:left-tail-rate}
\limsup_{n\to\infty}\frac1n\log I_n^{-}(a)
\le\alpha\log Z_0+(1-\alpha)\log\omega(a).
\end{equation}
By Lemma~\ref{lem:endpoints}, $\omega(a)\to0$ as $a\downarrow0$, so the
right-hand side tends to $-\infty$.

\emph{Upper bound at infinity.}
Let $b\ge1$.  For $\theta\ge b$, monotonicity in $\theta$ gives
$Z(\theta,\lambda)\le Z(b,\lambda)$, and Lemma~\ref{lem:endpoints} gives
$B\le\sqrt{\pi/\theta}$ and $A\le C_A\theta^{-1}$ with $C_A=Z(0,\lambda)/e$.
The powers of $\theta$ add up to
$\frac{n-j_n}{2}-1-\frac{n-1-j_n}{2}-1=-\frac32$, so
\[
\theta^{(n-j_n)/2-1}Z^{j_n}B^{n-1-j_n}A
\le C_A\pi^{(n-1-j_n)/2}Z(b,\lambda)^{j_n}\theta^{-3/2}
\qquad(\theta\ge b).
\]
Integrating,
\begin{align*}
I_n^{+}(b)
&:=\int_b^\infty\theta^{(n-j_n)/2-1}Z^{j_n}B^{n-1-j_n}A\dd\theta\\
&\le 2b^{-1/2}C_A\,\pi^{(n-1-j_n)/2}Z(b,\lambda)^{j_n}.
\end{align*}
After division by $n$, the logarithm of the fixed factor $2b^{-1/2}C_A$
tends to zero, and therefore
\begin{equation}\label{eq:right-tail-rate}
\limsup_{n\to\infty}\frac1n\log I_n^{+}(b)
\le\frac{1-\alpha}{2}\log\pi+\alpha\log Z(b,\lambda).
\end{equation}
Since $\alpha>0$ and $Z(b,\lambda)\to0$ by \eqref{eq:thetaends}, the
right-hand side tends to $-\infty$ as $b\to\infty$.

\emph{The middle interval and the sum.}
Write $I_n^{0}(a,b)$ for the integral over $[a,b]$.  It is at most
\[
(b-a)\max_{[a,b]}\Delta\,
\exp\left\{n\sup_{[a,b]}\phi_{\alpha_n}(\theta,\lambda)\right\}.
\]
The logarithm of the fixed factor $(b-a)\max_{[a,b]}\Delta$, divided by $n$,
tends to zero.  Uniform convergence of the exponents therefore gives
\[
\limsup_{n\to\infty}\frac1n\log I_n^{0}(a,b)
\le\sup_{[a,b]}\phiA(\theta,\lambda)\le\PhiA(\lambda).
\]
Choose $a$ small and $b$ large
so that \eqref{eq:left-tail-rate} and \eqref{eq:right-tail-rate} are
strictly below $\PhiA(\lambda)$.  These choices are fixed as $n\to\infty$.
Since $I_n=I_n^{-}(a)+I_n^{0}(a,b)+I_n^{+}(b)$,
\[
\frac1n\log I_n
\le\frac{\log3}{n}
+\max\left\{\frac1n\log I_n^{-}(a),\frac1n\log I_n^{0}(a,b),
\frac1n\log I_n^{+}(b)\right\}.
\]
Taking the upper limit and using the three bounds proves
$\limsup_n n^{-1}\log I_n(\lambda)\le\PhiA(\lambda)$.
This proves \eqref{eq:Inrate}.  The argument does not use the uniqueness
of the inner maximizer established in Lemma~\ref{lem:inner}.

\emph{The prefactor.}
The normalized logarithm of the prefactor in \eqref{eq:product} is
\[
\begin{aligned}
\frac1n\log\frac{n\binom{n-1}{j_n}}{2\pi^{(n-j_n)/2}}
&=\frac{\log n-\log2}{n}
+\frac1n\log\binom{n-1}{j_n}
\\&\quad-\frac{1-\alpha_n}{2}\log\pi.
\end{aligned}
\]
Stirling's formula and $j_n/(n-1)\to\alpha\in(0,1)$ give
$n^{-1}\log\binom{n-1}{j_n}\to H(\alpha)$.  The first term tends to
zero, and the final term converges directly.  Hence
\[
\frac1n\log\frac{n\binom{n-1}{j_n}}{2\pi^{(n-j_n)/2}}
\longrightarrow H(\alpha)-\frac{1-\alpha}{2}\log\pi.
\]
Together with Proposition~\ref{prop:product} and \eqref{eq:Inrate}, this
proves \eqref{eq:transform-rate}.
\end{proof}

\subsection{The outer minimization}

For $\lambda>0$ and $\theta>0$, and for $\theta=0$ in the case of
$\mu_Z$, define the probability measures
\[
\mu_{Z,\theta,\lambda}(\dd s)
=\frac{e^{-\lambda M(s)-\theta M'(s)^2}}{Z(\theta,\lambda)}\dd s,
\qquad
\mu_{B,\theta,\lambda}(\dd s)
=\frac{M''(s)e^{-\lambda M(s)-\theta M'(s)^2}}
{B(\theta,\lambda)}\dd s.
\]
Write $\E_Z,\E_B$ and $\Var_Z,\Var_B$ for their expectations and
variances when the parameters are clear.

\begin{lemma}[Convexity and coercivity]\label{lem:coercive}
Fix $\alpha\in(0,1)$.  The function $\Lambda$ is finite, nonincreasing,
and strictly convex on
$(0,\infty)$.  For every $\rho>0$,
\[
\Lambda(\lambda)\to+\infty\quad(\lambda\downarrow0),\qquad
\lambda\rho+\Lambda(\lambda)\to+\infty\quad(\lambda\to\infty).
\]
Consequently $\lambda\rho+\Lambda(\lambda)$ is coercive on $(0,\infty)$,
meaning it tends to $+\infty$ at both endpoints, and has a unique
minimizer there.
\end{lemma}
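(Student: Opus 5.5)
The plan is to obtain convexity and monotonicity from the finite-$n$ transforms, strict convexity from the variational formula for $\PhiA$, and both endpoint limits from a single elementary lower bound on $Q_{n,j_n}$ built from the inner ball in \eqref{eq:body-sandwich}.

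\emph{Finiteness, monotonicity, convexity.} Finiteness of $\Lambda$ follows from Lemma~\ref{lem:endpoints}(iv) and Proposition~\ref{prop:transform-rate}. Each $\Lambda_n(\lambda)=n^{-1}\log Q_{n,j_n}(\lambda)$ is nonincreasing, because the integrand $e^{-\lambda c}V_{j_n}(K_n(c))$ decreases in $\lambda$. Each $\Lambda_n$ is also convex, by H\"older's inequality applied to the Laplace transform of a nonnegative function. Pointwise limits preserve both properties, so $\Lambda$ is nonincreasing and convex.

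\emph{Strict convexity.} It suffices to show that $\PhiA$ is strictly convex. For fixed $\theta>0$, the functions $\lambda\mapsto\log Z(\theta,\lambda)$ and $\lambda\mapsto\log B(\theta,\lambda)$ are log-Laplace transforms, hence convex. By Lemma~\ref{lem:endpoints}(i),
\[
\partial_\lambda^2\log Z=\Var_{Z}(M)>0,
\]
since $M$ is not a.e.\ constant under a density with full support. So $\phiA(\theta,\cdot)$ is strictly convex, because $\alpha>0$.

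Suppose $\PhiA$ were affine on some $[\lambda_1,\lambda_2]$. Let $\lambda$ be the midpoint and $\theta_*=\theta_*(\lambda)$ its maximizer from Lemma~\ref{lem:inner}. Then
\[
\PhiA(\lambda)=\phiA(\theta_*,\lambda)<\tfrac12\phiA(\theta_*,\lambda_1)+\tfrac12\phiA(\theta_*,\lambda_2)\le\tfrac12\PhiA(\lambda_1)+\tfrac12\PhiA(\lambda_2),
\]
which contradicts affinity. Hence $\PhiA$, and therefore $\Lambda$, is strictly convex.

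\emph{Endpoint behaviour via a lower bound.} Fix $0<\rho_1<\rho_2$. The lower bound in Lemma~\ref{lem:profile}, together with monotonicity of $v$, gives for $c\in[n\rho_1,n\rho_2]$
\[
V_{j_n}(K_n(c))\ge v(\rho_1)^{j_n}V_{j_n}(B_2^n).
\]
Consequently
\[
Q_{n,j_n}(\lambda)\ge n(\rho_2-\rho_1)e^{-\lambda n\rho_2}v(\rho_1)^{j_n}V_{j_n}(B_2^n).
\]
The explicit value $V_j(B_2^n)=\binom nj\kappa_n/\kappa_{n-j}$ and Stirling's formula give
\[
\liminf_n n^{-1}\log V_{j_n}(B_2^n)=:C_\alpha>-\infty.
\]
Therefore, for all $0<\rho_1<\rho_2$ and all $\lambda>0$,
\[
\Lambda(\lambda)\ge -\lambda\rho_2+\alpha\log v(\rho_1)+C_\alpha.
\]

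As $\lambda\downarrow0$, the right side tends to $\alpha\log v(\rho_1)+C_\alpha$. This can be made arbitrarily large by taking $\rho_1\to\infty$, because $v(\rho_1)\to\infty$. Hence $\Lambda(\lambda)\to+\infty$.

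For $\lambda\to\infty$, choose $\rho_1<\rho_2<\rho$. Then
\[
\lambda\rho+\Lambda(\lambda)\ge\lambda(\rho-\rho_2)+\alpha\log v(\rho_1)+C_\alpha\to+\infty.
\]

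\emph{Unique minimizer.} The function $\lambda\rho+\Lambda$ is convex, hence continuous, on $(0,\infty)$. It is coercive at both endpoints, so a minimizer exists. Strict convexity makes it unique.

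The main obstacle I expect is strict convexity: the other properties pass to the limit $\Lambda=\lim\Lambda_n$, but strictness does not. It has to come from the variational formula for $\PhiA$, and the argument relies on the maximizer in Lemma~\ref{lem:inner}.
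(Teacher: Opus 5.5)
Your arguments for finiteness, monotonicity and convexity (through the convex, nonincreasing $\Lambda_n$ and Proposition~\ref{prop:transform-rate}) are fine. Your strict-convexity step is the paper's own argument. The gap is in the endpoint section. The claim $\liminf_n n^{-1}\log V_{j_n}(B_2^n)>-\infty$ is false. From $V_j(B_2^n)=\binom nj\kappa_n/\kappa_{n-j}$ and Stirling's formula (equivalently, from the quadratic case of Section~\ref{sec:known-cases} after removing the factor $(n\rho)^{j_n/2}$),
\[
\frac1n\log V_{j_n}(B_2^n)=-\frac{j_n}{2n}\log n+O(1)\longrightarrow-\infty .
\]
The inscribed ball $v(\rho_1)B_2^n$ has bounded radius, whereas $K_n(n\rho)$ reaches distance of order $\sqrt n$, for instance at the point $(v(\rho),\dots,v(\rho))$. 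Proportional-order intrinsic volumes of a ball of bounded radius carry an extra factor $n^{-j_n/2}$, up to $e^{O(n)}$. Your lower bound therefore reads $\Lambda(\lambda)\ge-\infty$ and yields neither endpoint limit. Those limits are the substance of coercivity, so the lemma's main assertion is left unproved.

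You can repair the idea by inscribing a cube instead of a ball. For $c\ge n\rho_1$, every $x\in[-v(\rho_1),v(\rho_1)]^n$ satisfies $F_n(x)\le n\rho_1\le c$. Monotonicity of $V_j$ and the box formula $V_j([0,a]^n)=\binom nj a^j$ then give $V_{j_n}(K_n(c))\ge\binom n{j_n}(2v(\rho_1))^{j_n}$, whose normalized logarithm tends to the finite value $H(\alpha)+\alpha\log(2v(\rho_1))$. Your remaining steps go through unchanged, giving $\Lambda(\lambda)\ge-\lambda\rho_2+H(\alpha)+\alpha\log(2v(\rho_1))$ for all $0<\rho_1<\rho_2$ and $\lambda>0$, and hence both limits as you argue.

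Once repaired, your route differs genuinely from the paper's, which stays with the one-dimensional integrals. As $\lambda\downarrow0$, the paper uses monotone convergence of $Z$ and $B$, with a case split. If $L<\infty$, then $Z(\theta,0)=\infty$. If $L=\infty$, then $\sqrt\theta\,B(\theta,0)=\sqrt\pi$ and $Z(\theta,0)\to\infty$ as $\theta\downarrow0$. As $\lambda\to\infty$, it restricts $Z(1,\lambda)$ and $B(1,\lambda)$ to small intervals. Your geometric route avoids the case split, but it needs Proposition~\ref{prop:transform-rate} to identify $\Lambda$ with $\lim_n\Lambda_n$. The paper's argument works entirely within the variational formula for $\PhiA$.
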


\begin{proof}
\emph{Convexity and monotonicity.}
Finiteness of $\Lambda$ is Lemma~\ref{lem:endpoints}(iv), and
Lemma~\ref{lem:endpoints}(i) justifies differentiation under the integrals.
The calculation for $Z$ is
\begin{align*}
Z_\lambda
&=-\int_\R M(s)e^{-\lambda M(s)-\theta M'(s)^2}\dd s
=-Z\E_Z[M],\\
Z_{\lambda\lambda}
&=\int_\R M(s)^2e^{-\lambda M(s)-\theta M'(s)^2}\dd s
=Z\E_Z[M^2].
\end{align*}
It follows that
\[
\partial_\lambda^2\log Z
=\frac{Z_{\lambda\lambda}}Z-
\left(\frac{Z_\lambda}Z\right)^2
=\E_Z[M^2]-\E_Z[M]^2=\Var_Z(M).
\]
The calculation for $B$ inserts the nonnegative weight $M''(s)$ into
both integrals and normalizes by $B$.  Thus
\begin{align*}
\partial_\lambda\log Z&=-\E_Z[M],
&\partial_\lambda^2\log Z&=\Var_Z(M),\\
\partial_\lambda\log B&=-\E_B[M],
&\partial_\lambda^2\log B&=\Var_B(M).
\end{align*}
It follows that
\begin{align*}
\partial_\lambda\phiA
&=-\alpha\E_Z[M]-(1-\alpha)\E_B[M]\le0,\\
\partial_\lambda^2\phiA
&=\alpha\Var_Z(M)+(1-\alpha)\Var_B(M)>0.
\end{align*}
Strictness holds because $\alpha>0$, the $Z$-density is positive on $\R$,
and $M$ is not constant.  Thus each $\phiA(\theta,\cdot)$ is
nonincreasing and strictly convex.

The supremum preserves monotonicity.  To prove strict convexity, let
$\lambda_t=(1-t)\lambda_0+t\lambda_1$ with $0<t<1$ and
$\lambda_0\ne\lambda_1$, and let $\theta_t=\theta_*(\lambda_t)$.  Then
\begin{align*}
\PhiA(\lambda_t)
&=\phiA(\theta_t,\lambda_t)\\
&<(1-t)\phiA(\theta_t,\lambda_0)+t\phiA(\theta_t,\lambda_1)\\
&\le(1-t)\PhiA(\lambda_0)+t\PhiA(\lambda_1).
\end{align*}
The same properties hold for $\Lambda$, which differs from $\PhiA$ by a
constant.

\emph{The endpoint $\lambda\downarrow0$.}
Fix $\theta>0$.  As $\lambda\downarrow0$, the integrands defining
$Z(\theta,\lambda)$ and $B(\theta,\lambda)$ increase, so monotone
convergence gives $Z(\theta,\lambda)\uparrow Z(\theta,0)\in(0,\infty]$
and $B(\theta,\lambda)\uparrow B(\theta,0)>0$.

If $L<\infty$, then $M'(s)^2\to L^2$ as $|s|\to\infty$, so
$Z(\theta,0)=\infty$.  Thus
$\phiA(\theta,\lambda)\to\infty$ for every fixed $\theta>0$, and hence
$\PhiA(\lambda)\to\infty$.

Suppose now that $L=\infty$.  Equation~\eqref{eq:Bexact} gives
$B(\theta,0)=\sqrt{\pi/\theta}$, and therefore
\begin{align*}
\lim_{\lambda\downarrow0}\phiA(\theta,\lambda)
&=\frac{1-\alpha}{2}\log\theta+\alpha\log Z(\theta,0)
+(1-\alpha)\log\sqrt{\frac\pi\theta}\\
&=\alpha\log Z(\theta,0)+\frac{1-\alpha}{2}\log\pi
=:\phiA(\theta,0)\in(-\infty,+\infty].
\end{align*}
The two $\log\theta$ terms cancel, and $\phiA(\theta,0)=+\infty$ exactly
when $Z(\theta,0)=\infty$.  This case does occur.  For the admissible
defining function
\[
M(s)=\int_0^{|s|}\sqrt{\log(1+t)}\dd t,
\]
we have $M'(s)=\operatorname{sgn}(s)\sqrt{\log(1+|s|)}$ and hence
$Z(\theta,0)=2\int_0^\infty(1+s)^{-\theta}\dd s=\infty$ for
$0<\theta\le1$.  Another application of monotone convergence
gives
\[
Z(\theta,0)=\int_\R e^{-\theta M'(s)^2}\dd s
\longrightarrow\int_\R1\dd s=\infty\qquad(\theta\downarrow0),
\]
so $\phiA(\theta,0)\to+\infty$ as $\theta\downarrow0$.  Given $R>0$,
first choose $\theta_R>0$ with $\phiA(\theta_R,0)>R+1$ (possibly
$\phiA(\theta_R,0)=+\infty$).  Since
$\phiA(\theta_R,\lambda)\to\phiA(\theta_R,0)$ as $\lambda\downarrow0$, we
have $\PhiA(\lambda)\ge\phiA(\theta_R,\lambda)>R$ for all sufficiently
small $\lambda$.  Hence $\PhiA(\lambda)\to\infty$, and the same is true of
$\Lambda$, which differs from $\PhiA$ by a constant.

\emph{The endpoint $\lambda\to\infty$.}
Fix $0<\varepsilon<L$.  By evenness and
monotonicity, $M(s)\le M(\varepsilon)$ and $M'(s)^2\le M'(\varepsilon)^2$
for $|s|\le\varepsilon$, and $M(\sigma(r))\le M(\sigma(\varepsilon))$ for
$|r|\le\varepsilon$.  Restricting the integral defining $Z(1,\lambda)$ to
$|s|\le\varepsilon$, and the integral \eqref{eq:B-subst} for $B(1,\lambda)$
to $|r|\le\varepsilon$, gives
\[
Z(1,\lambda)\ge2\varepsilon\,e^{-\lambda M(\varepsilon)-M'(\varepsilon)^2},
\qquad
B(1,\lambda)\ge2\varepsilon\,e^{-\lambda M(\sigma(\varepsilon))-\varepsilon^2}.
\]
Evaluating the supremum defining $\PhiA(\lambda)$ at $\theta=1$, where the
$\log\theta$ term vanishes, therefore yields
\[
\lambda\rho+\Lambda(\lambda)
\ge\lambda\rho+H(\alpha)-\frac{1-\alpha}{2}\log\pi+\phiA(1,\lambda)
\ge\lambda(\rho-\eta_\varepsilon)-C_\varepsilon,
\]
where $\eta_\varepsilon=\alpha M(\varepsilon)+(1-\alpha)M(\sigma(\varepsilon))$
and $C_\varepsilon$ does not depend on $\lambda$.  Since $M$ is continuous
with $M(0)=0$ and $\sigma(r)\to0$ as $r\to0$, we have $\eta_\varepsilon\to0$
as $\varepsilon\downarrow0$.  Choosing $\varepsilon$ with
$\eta_\varepsilon\le\rho/2$ gives
$\lambda\rho+\Lambda(\lambda)\ge\lambda\rho/2-C_\varepsilon\to\infty$.
Finally, a finite convex function is continuous on its open domain.
The two endpoint limits therefore ensure an attained interior minimum
of $\lambda\rho+\Lambda(\lambda)$, and strict convexity makes it unique.
\end{proof}

\begin{lemma}[Optimality equations]\label{lem:optimality}
Fix $\alpha\in(0,1)$.  The function $\PhiA$ is differentiable on
$(0,\infty)$, with
\begin{equation}\label{eq:Phi-derivative}
\PhiA'(\lambda)=-\alpha\E_Z[M]-(1-\alpha)\E_B[M]
\qquad\text{at }(\theta,\lambda)=(\theta_*(\lambda),\lambda).
\end{equation}
For every $\rho>0$, let $\lambda_\rho$ be the minimizer of
$\lambda\rho+\PhiA(\lambda)$ from Lemma~\textup{\ref{lem:coercive}}.  The pair
$(\theta,\lambda)=(\theta_*(\lambda_\rho),\lambda_\rho)$ is the unique
solution in $(0,\infty)^2$ of
\begin{equation}\label{eq:optimality}
2\alpha\theta B(\theta,\lambda)=(1-\alpha)\lambda Z(\theta,\lambda),
\qquad
\alpha\E_Z[M]+(1-\alpha)\E_B[M]=\rho,
\end{equation}
where $\E_Z$ and $\E_B$ refer to $\mu_{Z,\theta,\lambda}$ and
$\mu_{B,\theta,\lambda}$.  In the notation of Theorem~\textup{\ref{thm:main}},
\eqref{eq:optimality} is \eqref{eq-optimality-main}.
\end{lemma}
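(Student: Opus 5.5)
The plan is an envelope (Danskin) argument for the derivative, followed by a first-order characterization of the outer minimizer.

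\emph{Differentiability of $\PhiA$.} Fix $\lambda_0>0$ and write $\theta_0=\theta_*(\lambda_0)$. By Lemma~\ref{lem:endpoints}(i), $\partial_\lambda\phiA(\theta,\lambda)=-\alpha\E_Z[M]-(1-\alpha)\E_B[M]$ exists and is jointly continuous on $(0,\infty)^2$, as computed in the proof of Lemma~\ref{lem:coercive}.

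For the lower bound on difference quotients, compare with the fixed $\theta_0$:
\[
\PhiA(\lambda)-\PhiA(\lambda_0)\ge\phiA(\theta_0,\lambda)-\phiA(\theta_0,\lambda_0).
\]
For the upper bound, use $\theta_*(\lambda)$:
\[
\PhiA(\lambda)-\PhiA(\lambda_0)\le\phiA(\theta_*(\lambda),\lambda)-\phiA(\theta_*(\lambda),\lambda_0).
\]
By the mean value theorem, the right side of the second inequality equals $(\lambda-\lambda_0)\,\partial_\lambda\phiA(\theta_*(\lambda),\bar\lambda)$ for some $\bar\lambda$ between $\lambda_0$ and $\lambda$. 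The map $\theta_*$ is continuous by Lemma~\ref{lem:inner}. Dividing by $\lambda-\lambda_0$ and treating the two signs separately, both one-sided difference quotients are then squeezed to $\partial_\lambda\phiA(\theta_0,\lambda_0)$, which gives \eqref{eq:Phi-derivative}.

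There is an alternative route, but I would not use it because it needs more regularity. Since $\partial_\theta\phiA(\theta_*,\lambda)=0$, the implicit function theorem applied to \eqref{eq:inner-stationarity} would also work, but it requires smoothness of $\gamma_\lambda$ in $\lambda$. The envelope argument above avoids this.

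\emph{Existence.} By Lemma~\ref{lem:coercive}, $\lambda_\rho$ is an interior minimizer of the differentiable function $\lambda\rho+\PhiA(\lambda)$. Hence $\rho+\PhiA'(\lambda_\rho)=0$, and by \eqref{eq:Phi-derivative} this is the second equation of \eqref{eq:optimality} at $\theta=\theta_*(\lambda_\rho)$. The first equation is \eqref{eq:inner-stationarity}, which holds by Lemma~\ref{lem:inner}. So the pair $(\theta_*(\lambda_\rho),\lambda_\rho)$ solves \eqref{eq:optimality}.

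\emph{Uniqueness.} Let $(\theta,\lambda)\in(0,\infty)^2$ solve \eqref{eq:optimality}. The first equation is exactly $\gamma_\lambda(\theta)=\xi_\lambda$. By Lemma~\ref{lem:inner} this forces $\theta=\theta_*(\lambda)$. The second equation then says $\PhiA'(\lambda)=-\rho$, that is, $\lambda$ is a critical point of the differentiable, strictly convex function $\lambda\rho+\PhiA(\lambda)$; strict convexity comes from Lemma~\ref{lem:coercive}. A critical point of a convex differentiable function is a global minimizer, and strict convexity makes the minimizer unique. Hence $\lambda=\lambda_\rho$.

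Finally, \eqref{eq:optimality} is \eqref{eq-optimality-main}: the expectations in \eqref{eq:optimality} are exactly the quotients written out in \eqref{eq-optimality-main}.

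The main obstacle is the squeeze in the differentiability step. It needs joint continuity of $\partial_\lambda\phiA$ near $(\theta_0,\lambda_0)$ together with continuity of $\theta_*$, and both are available from Lemmas~\ref{lem:endpoints} and~\ref{lem:inner}. The only real care is to keep $\theta_*(\lambda)$ inside a fixed compact set, which Lemma~\ref{lem:endpoints}(iv) guarantees.
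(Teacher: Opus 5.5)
Your proposal is correct and matches the paper's proof essentially step for step. The paper uses the same envelope squeeze: the fixed $\theta_*(\lambda_0)$ gives the lower bound, the moving $\theta_*(\lambda)$ gives the upper bound, and the mean value theorem, joint continuity of $\partial_\lambda\phiA$, and continuity of $\theta_*$ close it. It then reduces the system to $\theta=\theta_*(\lambda)$ and $\PhiA'(\lambda)=-\rho$, with uniqueness from strict convexity, exactly as you do.
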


\begin{proof}
Fix $\lambda>0$ and put $\theta_\delta=\theta_*(\lambda+\delta)$ for small
$\delta\ne0$.  Comparing the suprema defining $\PhiA(\lambda+\delta)$ and
$\PhiA(\lambda)$ with their values at $\theta_*(\lambda)$ and at
$\theta_\delta$ gives
\[
\phiA(\theta_*(\lambda),\lambda+\delta)-\phiA(\theta_*(\lambda),\lambda)
\le\PhiA(\lambda+\delta)-\PhiA(\lambda)
\le\phiA(\theta_\delta,\lambda+\delta)-\phiA(\theta_\delta,\lambda).
\]
For $\delta>0$, divide by $\delta$.  For $\delta<0$, the two inequalities
reverse after division.  The mean value theorem applies to each bounding
difference.  Since $\partial_\lambda\phiA$ is jointly continuous
(Lemma~\ref{lem:endpoints}) and $\theta_*$ is continuous
(Lemma~\ref{lem:inner}), both bounds converge to
$\partial_\lambda\phiA(\theta_*(\lambda),\lambda)$.  Hence $\PhiA$ is
differentiable, and the formula for $\partial_\lambda\phiA$ in the proof
of Lemma~\ref{lem:coercive} gives \eqref{eq:Phi-derivative}.

By Lemma~\ref{lem:coercive}, $\lambda\rho+\PhiA(\lambda)$ is strictly
convex with a unique interior minimizer $\lambda_\rho$, which is therefore
the unique solution of $\PhiA'(\lambda)=-\rho$.  Given $\lambda>0$, the
first equation in \eqref{eq:optimality} is \eqref{eq:inner-stationarity}
and holds exactly for $\theta=\theta_*(\lambda)$.  At this $\theta$, the
second equation reads $\PhiA'(\lambda)=-\rho$ by
\eqref{eq:Phi-derivative}, so it holds exactly for $\lambda=\lambda_\rho$.
\end{proof}

\begin{remark}[Curvature measures]\label{rem:optimality}
The second equation in \eqref{eq:optimality} says that the mixture
$\alpha\mu_{Z,\theta,\lambda}+(1-\alpha)\mu_{B,\theta,\lambda}$ satisfies,
on average over a coordinate, the constraint $\sum_iM(x_i)=n\rho$ that
defines $\partial K_n(n\rho)$.  For $M(s)=|s|^p$ and after the scaling of
Section~\ref{sec:known-cases}, this mixture is exactly the limit law of a
single coordinate of a random point of $\partial K_n(n\rho)$ distributed
according to the normalized curvature measure
$\mathcal C_{j_n}(K_n(n\rho),\cdot)/V_{j_n}(K_n(n\rho))$, see
\cite[Theorem~3.1(i)]{KabluchkoMarynychProchno}.  This suggests an
analogous Maxwell--Poincar\'e--Borel law for Orlicz balls, see
Section~\ref{sec-open}.
\end{remark}
\section{Exact-mean inversion}\label{sec:inversion}

We use two standard facts about one-dimensional convexity.  A real-valued
convex function on an open interval is continuous
\cite[Theorem~10.1]{Rockafellar}, and multiplication by the exponential
of an affine function preserves log-concavity.  A log-concave density is
understood in its upper-semicontinuous version, obtained by taking upper
limits at the endpoints of its positive support.  Thus its point values
are well defined and its essential supremum is attained.

The transform integrates the profile against $e^{-\lambda c}$ over all
levels $c$.  Writing $f_n(c)=V_{j_n}(K_n(c))$ and $Q_n=Q_{n,j_n}$ for
its transform, we recover $f_n(n\rho)$ by an exponential change of
measure.  For $\lambda>0$, the tilted probability density
\[
p_{n,\lambda}(c)=\frac{e^{-\lambda c}f_n(c)}{Q_n(\lambda)},
\qquad c>0,
\]
satisfies $f_n(n\rho)=Q_n(\lambda)e^{\lambda n\rho}p_{n,\lambda}(n\rho)$,
so $n^{-1}\log f_n(n\rho)$ and the transform objective
$\lambda\rho+n^{-1}\log Q_n(\lambda)$ differ exactly by
$n^{-1}\log p_{n,\lambda}(n\rho)$.  This difference is negligible when
$n\rho$ is a typical value of the tilted law.  As in the proof of the
lower bound in Cram\'er's theorem \cite[Section~2.2]{DemboZeitouni}, we
make it typical by matching the mean: the finite-$n$ minimizer of the
transform objective is exactly the parameter for which $p_{n,\lambda}$
has mean $n\rho$.  Log-concavity guarantees that the density at this
mean is at least $1/(4en\rho)$.  Consequently, evaluating the profile at
$n\rho$ loses only a polynomial factor relative to the optimized
transform.  The following lemma supplies the required density bound.

\begin{lemma}[Value of a log-concave density at its mean]\label{lem:mean-density}
Let $p$ be a log-concave probability density on $[0,\infty)$, taken in its
upper-semicontinuous representative, with mean $m>0$.  Then
\begin{equation}\label{eq:mean-density}
p(m)\ge e^{-1}\lVert p\rVert_\infty\ge\frac1{4em}.
\end{equation}
\end{lemma}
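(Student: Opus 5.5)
The plan is to prove the two inequalities in \eqref{eq:mean-density} separately. The second one does not need log-concavity. Since $p$ is a density on $[0,\infty)$ with mean $m$, Markov's inequality gives $\int_{2m}^\infty p\le\tfrac12$. Hence $\tfrac12\le\int_0^{2m}p\le 2m\lVert p\rVert_\infty$, that is, $\lVert p\rVert_\infty\ge 1/(4m)$. Dividing by $e$ gives the right-hand inequality.

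The first inequality, $p(m)\ge e^{-1}\lVert p\rVert_\infty$, is the one-dimensional case of Fradelizi's inequality, and I would prove it directly. Write $p=e^{-V}$ with $V\colon\R\to(-\infty,\infty]$ convex (set $V=+\infty$ off the support) and lower semicontinuous, which is the upper-semicontinuous representative of $p$. Let $x_0$ be a point where $p$ attains $\lVert p\rVert_\infty$. The mean lies in the interior of the support, so $V(m)<\infty$. Suppose, for contradiction, that $V(m)>V(x_0)+1$. By reflecting $x\mapsto -x$, which keeps log-concavity and maps mean to mean, we may assume $x_0<m$; this is why I work on $\R$ rather than only on $[0,\infty)$.

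Consider the affine function $\ell$ with $\ell(x_0)=V(x_0)$ and $\ell(m)=V(m)$, and put $s=(V(m)-V(x_0))/(m-x_0)>1/(m-x_0)$. Convexity of $V$ gives $V\le\ell$ on $[x_0,m]$ and $V\ge\ell$ on $[m,\infty)$. So, compared with the exponential profile $q=e^{-\ell}$, the function $p$ has at least as much mass on $[x_0,m]$ and at most as much on $[m,\infty)$. I would turn this into an upper bound on $\int(x-m)p(x)\dd x$ and compare it with $0$, the value forced by $m$ being the mean. Concretely, the positive part $\int_m^\infty(x-m)p$ is at most $\int_m^\infty(x-m)e^{-\ell}=e^{-V(m)}/s^2$. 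For the negative part, $\int_{-\infty}^m(m-x)p\ge\int_{x_0}^m(m-x)e^{-\ell}$, which is computed explicitly. With $t=s(m-x_0)>1$ it equals $e^{-V(m)}s^{-2}(e^{t}-1-t)$. Since $e^t-1-t>1$ for $t>1$, the negative part strictly exceeds the positive part, so the mean of $p$ would lie strictly below $m$, a contradiction. Hence $V(m)\le V(x_0)+1$, which is the first inequality.

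The step I expect to need the most care is this comparison. The mass of $p$ to the left of $x_0$ only helps, since it contributes to the negative part. The delicate point is getting the signs and directions of the convexity inequalities right on the two sides of $m$. I also need to treat the case where $V$ jumps to $+\infty$ inside $[x_0,m]$; this cannot happen, because $V(m)<\infty$ and $V$ is convex. The threshold $e^t-1-t=1$ at $t=1$ confirms that the constant $e^{-1}$ is sharp: it is attained by the exponential density on $[0,\infty)$, whose mean $1/s$ gives $p(m)/p(0)=e^{-1}$.
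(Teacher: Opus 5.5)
Your route differs from the paper's, and the overall strategy is sound, but the key computation is wrong as written. The paper extends $p$ by zero and cites Fradelizi's barycenter inequality $f(g)\ge e^{-d}\sup f$ with $d=1$, then uses the same Markov argument you give. Your direct comparison of $V$ with the secant $\ell$ through $(x_0,V(x_0))$ and $(m,V(m))$ is a legitimate self-contained replacement.

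The error is in the negative part. Since $\ell(x)=V(m)-s(m-x)$, the integral is
\[
\int_{x_0}^m(m-x)e^{-\ell(x)}\dd x
=e^{-V(m)}\int_0^{m-x_0}y\,e^{sy}\dd y
=\frac{e^{-V(m)}}{s^2}\bigl((t-1)e^t+1\bigr),
\]
not $e^{-V(m)}s^{-2}(e^t-1-t)$. Your expression is what the weight $x-x_0$ produces instead of $m-x$. The inequality you then invoke, $e^t-1-t>1$ for $t>1$, is false: at $t=1$ the left side is $e-2$, and it stays below $1$ until the root $t^*\approx1.146$ of $e^t=2+t$. So, as written, you reach a contradiction only when $t>t^*$. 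That proves only the weaker bound $p(m)\ge e^{-t^*}\lVert p\rVert_\infty$. Your sharpness remark also conflicts with your formula: for the exponential density the two parts must balance exactly at $t=1$, whereas $e-2\ne1$.

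With the correct value the argument closes. We have $(t-1)e^t+1>1$ exactly when $t>1$. So the negative part strictly exceeds $e^{-V(m)}/s^2$, which bounds the positive part, and the mean lies strictly below $m$, a contradiction. Equality at $t=1$ is precisely the exponential density, so sharpness of $e^{-1}$ now follows. Your remaining steps are correct: attainment of the maximum, $V(m)<\infty$, the reflection, the two convexity inequalities on either side of $m$, and the Markov bound.

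The trade-off is this. The paper's proof is two lines resting on a published $d$-dimensional theorem. Your corrected proof is elementary, confined to dimension one, and exhibits the extremal case.
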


When $m=n\rho$, taking logarithms in \eqref{eq:mean-density} gives
$\log p(n\rho)\ge-\log(4en\rho)$, which is negligible after division
by $n$.

\begin{proof}
Extend $p$ by zero to $\R$.  Its positive support is an interval, so this
extension remains log-concave and has mean $m$.
Fradelizi's barycenter inequality \cite[Theorem~4, p.~522]{Fradelizi}
states that an integrable log-concave function $f$ on $\R^d$ with
barycenter $g$ satisfies $f(g)\ge e^{-d}\sup f$.  In dimension one this
gives $\lVert p\rVert_\infty\le e\,p(m)$.
If $X$ has density $p$, Markov's inequality gives
\[
\frac12\le\Pr\{X\le2m\}
=\int_0^{2m}p(x)\dd x\le2m\lVert p\rVert_\infty.
\]
Thus $\lVert p\rVert_\infty\ge1/(4m)$, proving \eqref{eq:mean-density}.
\end{proof}

The second ingredient concerns optimization.  Pointwise convergence alone
does not control function values at a parameter that changes with $n$.
For finite convex functions, it strengthens to uniform convergence on
compact intervals, which will allow comparison of the minimizing values.

\begin{lemma}[Local uniform convergence of convex functions]
\label{lem:convex-local-uniform}
Let $G_n$ be finite convex functions on an open interval $I$.  If
$G_n\to G$ pointwise and $G$ is finite, then $G$ is convex and the
convergence is uniform on every compact subinterval of $I$.
\end{lemma}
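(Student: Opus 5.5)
Convexity of $G$ is immediate: for $x,y\in I$ and $t\in[0,1]$, pass to the limit in $G_n((1-t)x+ty)\le(1-t)G_n(x)+tG_n(y)$. Since $G$ is finite, it is convex and hence continuous on $I$ by \cite[Theorem~10.1]{Rockafellar}. For uniform convergence, fix a compact $[a,b]\subset I$ and choose $a'<a$ and $b'>b$ in $I$. The plan is to show that the $G_n$ are eventually uniformly Lipschitz on $[a,b]$, and then combine this with pointwise convergence on a finite grid.

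\emph{Uniform Lipschitz bound.} For convex $G_n$, the slopes satisfy
\[
\frac{G_n(a)-G_n(a')}{a-a'}\le\frac{G_n(y)-G_n(x)}{y-x}\le\frac{G_n(b')-G_n(b)}{b'-b}
\]
for all $a\le x<y\le b$. This is the monotonicity of difference quotients of a convex function. The outer quotients involve only the four fixed points $a',a,b,b'$, and they converge to the corresponding quotients of $G$. Hence they are bounded in absolute value by some $C$ for all large $n$. The same argument applies to $G$, so $G_n$ and $G$ are all $C$-Lipschitz on $[a,b]$ for $n$ large.

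\emph{Grid argument.} Given $\varepsilon>0$, pick a finite grid in $[a,b]$ with mesh $\delta\le\varepsilon/(2C)$. By pointwise convergence at the finitely many grid points, for $n$ large we have $|G_n-G|\le\varepsilon$ at every grid point. Now take any $x\in[a,b]$ and its nearest grid point $x_i$. The Lipschitz bounds give
\[
|G_n(x)-G(x)|\le|G_n(x_i)-G(x_i)|+2C|x-x_i|\le2\varepsilon.
\]
This proves uniform convergence on $[a,b]$.

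No step is a serious obstacle. The one point that needs care is that the Lipschitz constant must be uniform in $n$. This is why the argument uses slopes computed at points strictly outside $[a,b]$, where pointwise convergence controls them, and not derivatives of $G_n$ at points inside $[a,b]$.
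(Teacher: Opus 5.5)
Your proof is correct. The convexity step is the same as the paper's, and for uniform convergence the paper only cites Rockafellar's Theorem~10.8. That theorem's proof is your argument in one dimension: a uniform Lipschitz bound on compact sets from pointwise control at nearby points, followed by a finite-grid argument. Your version simply makes the lemma self-contained.
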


\begin{proof}
Passing to the limit in
$G_n((1-t)x+ty)\le(1-t)G_n(x)+tG_n(y)$ shows that $G$ is convex.
Uniform convergence on compact subintervals is the one-dimensional case of
Rockafellar~\cite[Theorem~10.8, pp.~90--91]{Rockafellar}.
\end{proof}

\begin{proposition}[Exact-mean inversion]\label{prop:inversion}
Let $f_n\colon(0,\infty)\to(0,\infty)$ be nondecreasing and log-concave.  Put
\[
Q_n(\lambda)=\int_0^\infty e^{-\lambda c}f_n(c)\dd c,
\qquad
\Lambda_n(\lambda)=\frac1n\log Q_n(\lambda).
\]
Assume $Q_n(\lambda)<\infty$ for every $\lambda>0$.
Suppose $\Lambda_n\to\Lambda$ pointwise on $(0,\infty)$, with $\Lambda$
finite.  If, for some $\rho>0$, the function
$G_\rho(\lambda)=\lambda\rho+\Lambda(\lambda)$ is coercive, then
\begin{equation}\label{eq:inversion}
\lim_{n\to\infty}\frac1n\log f_n(n\rho)
=\inf_{\lambda>0}\{\lambda\rho+\Lambda(\lambda)\}.
\end{equation}
\end{proposition}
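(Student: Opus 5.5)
We prove the two inequalities separately, writing $\mathcal G=\inf_{\lambda>0}G_\rho(\lambda)$.

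\emph{Upper bound.} This uses only monotonicity of $f_n$. For every $\lambda>0$,
\[
Q_n(\lambda)\ge\int_{n\rho}^{\infty}e^{-\lambda c}f_n(c)\dd c\ge f_n(n\rho)\,\frac{e^{-\lambda n\rho}}{\lambda}.
\]
Hence $n^{-1}\log f_n(n\rho)\le\lambda\rho+\Lambda_n(\lambda)+n^{-1}\log\lambda$. Letting $n\to\infty$ at fixed $\lambda$ and then taking the infimum over $\lambda$ gives $\limsup_n n^{-1}\log f_n(n\rho)\le\mathcal G$.

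\emph{Lower bound via exact mean.} Here we use the tilted density $p_{n,\lambda}$ and Lemma~\ref{lem:mean-density}.

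First, $\Lambda_n$ is convex on $(0,\infty)$ by Hölder's inequality. Differentiating under the integral, its derivative is $-n^{-1}$ times the mean $m_n(\lambda)$ of $p_{n,\lambda}$, so $G_{n}(\lambda):=\lambda\rho+\Lambda_n(\lambda)$ has derivative $\rho-m_n(\lambda)/n$.

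Next we find a critical point of $G_n$ in a fixed compact interval. Since $G_\rho$ is coercive and convex (Lemma~\ref{lem:convex-local-uniform}), we can pick $0<a<\lambda^\circ<b$ with $G_\rho(a)>G_\rho(\lambda^\circ)+1$ and $G_\rho(b)>G_\rho(\lambda^\circ)+1$. By Lemma~\ref{lem:convex-local-uniform}, $G_n\to G_\rho$ uniformly on $[a,b]$. So for large $n$ the convex function $G_n$ is larger at $a$ and at $b$ than at $\lambda^\circ$. Hence it attains its minimum over $(0,\infty)$ at some $\lambda_n\in(a,b)$, where $G_n'(\lambda_n)=0$, that is, $m_n(\lambda_n)=n\rho$.

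Now apply the mean-density bound. The density $p_{n,\lambda_n}$ is log-concave, being $f_n$ times an exponential of an affine function, and has mean $n\rho$. Lemma~\ref{lem:mean-density} then gives $p_{n,\lambda_n}(n\rho)\ge 1/(4en\rho)$. This bound needs the upper-semicontinuous representative to agree with $f_n$ at $n\rho$. That holds because $f_n$ is log-concave and positive on $(0,\infty)$, hence continuous in the interior. The identity $f_n(n\rho)=Q_n(\lambda_n)e^{\lambda_nn\rho}p_{n,\lambda_n}(n\rho)$ then yields
\[
\frac1n\log f_n(n\rho)\ge G_n(\lambda_n)-\frac{\log(4en\rho)}n.
\]

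Finally, pass to the limit. We have $G_n(\lambda_n)\ge\inf_{[a,b]}G_n\ge\inf_{[a,b]}G_\rho-\sup_{[a,b]}|G_n-G_\rho|$. The last term tends to $0$, and $\inf_{[a,b]}G_\rho\ge\mathcal G$. Therefore $\liminf_n n^{-1}\log f_n(n\rho)\ge\mathcal G$.

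\emph{Main obstacle.} The delicate step is justifying the critical-point equation $m_n(\lambda_n)=n\rho$. This needs differentiability of $\Lambda_n$, obtained by dominated convergence since $Q_n$ is finite on all of $(0,\infty)$. It also needs the minimizer to be interior, which is what the uniform convergence of Lemma~\ref{lem:convex-local-uniform} provides. If differentiability were in doubt, I would instead use one-sided derivatives: $0$ lies in the subdifferential of $G_n$ at $\lambda_n$, and the one-sided derivatives of $\Lambda_n$ equal $-m_n/n$ by monotone convergence, so the mean still equals $n\rho$.
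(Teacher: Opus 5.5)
Your proof is correct and follows the paper's argument. You tilt to $p_{n,\lambda}$, confine an interior minimizer of $\lambda\rho+\Lambda_n(\lambda)$ to a fixed $[a,b]$ via Lemma~\ref{lem:convex-local-uniform}, and combine $m_n(\lambda_n)=n\rho$ with Lemma~\ref{lem:mean-density} for the lower bound; the upper bound comes from $Q_n(\lambda)\ge f_n(n\rho)e^{-\lambda n\rho}/\lambda$. The differences are minor streamlinings. You take the upper bound at fixed $\lambda$ before the infimum, so you need no control of $\log\lambda_n$, and you get the critical point from the gap at $a,b$ rather than from finite-$n$ coercivity and strict convexity. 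The paper's route additionally yields the finite-$n$ sandwich \eqref{eq:pointidentity}.
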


\begin{proof}
\emph{Construction of the minimizing parameter.}
Fix $\rho>0$ and write
\[
G_{n,\rho}(\lambda)=\lambda\rho+\Lambda_n(\lambda),
\qquad
p_{n,\lambda}(c)=Q_n(\lambda)^{-1}e^{-\lambda c}f_n(c).
\]
For any $a>0$, monotonicity gives
\[
Q_n(\lambda)\ge f_n(a)\int_a^\infty e^{-\lambda c}\dd c
=\frac{f_n(a)}\lambda e^{-\lambda a}.
\]
Taking $a=n\rho/2$ yields
\[
G_{n,\rho}(\lambda)
\ge\frac{\lambda\rho}{2}
+\frac{\log f_n(n\rho/2)-\log\lambda}{n}.
\]
For fixed $n$, the right-hand side tends to $+\infty$ both as
$\lambda\downarrow0$ (through $-\log\lambda/n$) and as $\lambda\to\infty$
(through $\lambda\rho/2$), so $G_{n,\rho}$ is coercive.

To differentiate $Q_n$, restrict $\lambda$ to a compact interval
$[\lambda_-,\lambda_+]\subset(0,\infty)$.
For $r=0,1,2$, there is a constant $C_r$ such that
\[
c^re^{-\lambda c}f_n(c)
\le C_re^{-\lambda_-c/2}f_n(c).
\]
The right-hand side is integrable because $Q_n(\lambda_-/2)<\infty$.
Dominated convergence gives
\[
Q_n'(\lambda)=-\int_0^\infty ce^{-\lambda c}f_n(c)\dd c,
\qquad
Q_n''(\lambda)=\int_0^\infty c^2e^{-\lambda c}f_n(c)\dd c.
\]
If $X$ has density $p_{n,\lambda}$, division by $Q_n$ identifies the
first two moments as
\[
m_n(\lambda)=\E_{p_{n,\lambda}}X=-\frac{Q_n'}{Q_n},
\qquad
\E_{p_{n,\lambda}}X^2=\frac{Q_n''}{Q_n}.
\]
Consequently,
\begin{align}
G_{n,\rho}'(\lambda)
&=\rho+\frac{Q_n'(\lambda)}{nQ_n(\lambda)}
=\rho-\frac{m_n(\lambda)}n,
\label{eq:tilted-mean}\\
G_{n,\rho}''(\lambda)
&=\frac1n\left(\frac{Q_n''}{Q_n}
-\left(\frac{Q_n'}{Q_n}\right)^2\right)
=\frac1n\Var_{p_{n,\lambda}}(X)>0.
\label{eq:tilted-variance}
\end{align}
The variance is positive because $p_{n,\lambda}$ is positive on
$(0,\infty)$ and therefore cannot be concentrated at one point.
Coercivity, continuity, and strict convexity give a unique interior
minimizer $\lambda_{n,\rho}$.  At that point the derivative in
\eqref{eq:tilted-mean} vanishes, so
\[
0=\rho-\frac{m_n(\lambda_{n,\rho})}{n},
\qquad m_n(\lambda_{n,\rho})=n\rho.
\]
Thus minimization selects precisely the density whose mean is the point
where we need to evaluate $f_n$.  This is the mean matching described at
the beginning of the section: after the change of measure, the radius
$n\rho$ is the mean of the tilted law, and the next step shows that the
tilted density there is not exponentially small.

\emph{Evaluation at the mean.}
The density $p_{n,\lambda}$ is log-concave, since
\[
\log p_{n,\lambda}(c)
=-\log Q_n(\lambda)-\lambda c+\log f_n(c)
\]
is the sum of a constant, an affine function, and a concave function of $c$.
Moreover, positivity and log-concavity make $f_n$ continuous on
$(0,\infty)$, so $p_{n,\lambda}$ agrees there with its
upper-semicontinuous representative.  Lemma~\ref{lem:mean-density} gives
\[
p_{n,\lambda_{n,\rho}}(n\rho)\ge\frac1{4en\rho}.
\]
For the lower bound, rearrange the definition of the density at its mean.
\[
f_n(n\rho)=Q_n(\lambda_{n,\rho})e^{\lambda_{n,\rho}n\rho}
p_{n,\lambda_{n,\rho}}(n\rho).
\]
Taking logarithms, dividing by $n$, and applying the density bound yields
\begin{align*}
\frac1n\log f_n(n\rho)
&=G_{n,\rho}(\lambda_{n,\rho})
+\frac1n\log p_{n,\lambda_{n,\rho}}(n\rho)\\
&\ge G_{n,\rho}(\lambda_{n,\rho})-\frac{\log(4en\rho)}n.
\end{align*}
For the upper bound, use monotonicity directly for any $\lambda>0$.
\[
Q_n(\lambda)\ge f_n(n\rho)\int_{n\rho}^\infty e^{-\lambda c}\dd c
=\frac{f_n(n\rho)}\lambda e^{-\lambda n\rho}.
\]
Rearranging and then setting $\lambda=\lambda_{n,\rho}$ gives the upper
inequality in the following pair of bounds.
\begin{equation}\label{eq:pointidentity}
G_{n,\rho}(\lambda_{n,\rho})-\frac{\log(4en\rho)}n
\le\frac1n\log f_n(n\rho)
\le G_{n,\rho}(\lambda_{n,\rho})+\frac{\log\lambda_{n,\rho}}n.
\end{equation}

\emph{Convergence of the minima.}
Equation~\eqref{eq:tilted-variance} shows that each $G_{n,\rho}$ is finite
and convex.  By Lemma~\ref{lem:convex-local-uniform}, $G_\rho$ is convex, hence
continuous, and the pointwise convergence $G_{n,\rho}\to G_\rho$ is uniform
on compact subintervals of $(0,\infty)$.  Together with coercivity, this
makes the minimizer set of $G_\rho$ a nonempty compact interval.  Choose $a,b$ outside that interval and fix any
minimizer $\lambda_\rho$, so that $0<a<\lambda_\rho<b$.  The gap
\[
\delta=\min\{G_\rho(a),G_\rho(b)\}-G_\rho(\lambda_\rho)
\]
is positive.  For all sufficiently large $n$, uniform convergence gives
\[
\min\{G_{n,\rho}(a),G_{n,\rho}(b)\}
>G_{n,\rho}(\lambda_\rho)+\delta/2.
\]
These endpoint inequalities confine the finite-$n$ minimizers to $[a,b]$.
Indeed, a convex function is nondecreasing to the right of a minimizer.
If $\lambda_{n,\rho}<a<\lambda_\rho$, this would give
$G_{n,\rho}(a)\le G_{n,\rho}(\lambda_\rho)$, contradicting the gap.
Similarly, a convex function is nonincreasing to the left of a minimizer,
so $\lambda_{n,\rho}>b>\lambda_\rho$ would give the contradictory
inequality at $b$.

Both global minima are therefore attained in the same interval $[a,b]$
for all sufficiently large $n$.  If two functions differ by at most
$\varepsilon$ everywhere on that interval, their minimum values differ
by at most $\varepsilon$.  This follows by evaluating each function at a
minimizer of the other.  Applying this observation gives
\[
\left|\min_{\lambda>0}G_{n,\rho}(\lambda)
-\min_{\lambda>0}G_\rho(\lambda)\right|
\le\sup_{\lambda\in[a,b]}
|G_{n,\rho}(\lambda)-G_\rho(\lambda)|\longrightarrow0.
\]
The confinement also bounds $|\log\lambda_{n,\rho}|$ independently of
large $n$.  Thus the upper error in \eqref{eq:pointidentity} is $O(1/n)$,
and the lower error is $O(\log n/n)$ for fixed $\rho$.  Equivalently,
\[
\frac1n\log f_n(n\rho)
=\min_{\lambda>0}G_{n,\rho}(\lambda)
+O\!\left(\frac{\log n}{n}\right).
\]
This estimate compares the fixed-level value with the finite-$n$
optimized transform; it does not assert an $O(\log n/n)$ rate of
convergence to the limiting profile.
Combining this estimate with convergence of the minima proves
\eqref{eq:inversion}.
\end{proof}

\begin{proof}[Proof of Theorem~\ref{thm:main}]
We apply Proposition~\ref{prop:inversion} by checking its profile,
transform, and minimization hypotheses in turn.
For all sufficiently large $n$, $1\le j_n\le n-1$.  Set
$f_n=f_{n,j_n}$, that is, $f_n(c)=V_{j_n}(K_n(c))$.
Lemma~\ref{lem:profile} gives positivity, monotonicity, polynomial growth,
and log-concavity of $f_n$.  Polynomial growth makes every $Q_n(\lambda)$
finite, so all finite-$n$ assumptions hold.

The Laplace transform of this $f_n$ is exactly $Q_{n,j_n}$, so
Proposition~\ref{prop:transform-rate} gives
\[
\Lambda_n(\lambda)=\frac1n\log Q_{n,j_n}(\lambda)
\longrightarrow
\Lambda(\lambda)=H(\alpha)-\frac{1-\alpha}{2}\log\pi+\PhiA(\lambda).
\]
Finally, Lemma~\ref{lem:coercive} supplies coercivity and strict convexity
of $\lambda\rho+\Lambda(\lambda)$ for every $\rho>0$, hence existence
and uniqueness of its minimizing parameter.
Proposition~\ref{prop:inversion} therefore yields
\[
\lim_{n\to\infty}\frac1n\log V_{j_n}(K_n(n\rho))
=H(\alpha)-\frac{1-\alpha}{2}\log\pi
+\inf_{\lambda>0}\{\lambda\rho+\PhiA(\lambda)\},
\]
which is \eqref{eq:mainrho}, including its specialization to $\rho=1$.
The inner maximizer is unique by Lemma~\ref{lem:inner} and the outer
minimizer by Lemma~\ref{lem:coercive}, and Lemma~\ref{lem:optimality}
identifies the optimal pair as the unique solution of
\eqref{eq-optimality-main}.
\end{proof}

We now prove Corollary~\ref{cor:profile}.  The inversion also covers
volume, the case $j=n$ excluded from Proposition~\ref{prop:product}.
Log-concavity in the order $j$, together with continuity at the endpoints,
then gives convergence uniform over all orders.  The interpolation
argument follows \cite[Section~5.3]{KabluchkoMarynychProchno}.

Formally, the right-hand side of \eqref{eq:mainrho} at $\alpha=1$ reduces
to $\rate(1,\rho)$, since $H(1)=0$, the $\log\pi$ term vanishes, and
\[
\sup_{\theta>0}\phi_1(\theta,\lambda)
=\sup_{\theta>0}\log Z(\theta,\lambda)
=\log Z(0,\lambda),
\]
because $Z$ is nonincreasing in $\theta$.  Part~(i)
is the exponential volume asymptotics of Orlicz balls in Gibbs form, and the
minimizer is the parameter of the maximum entropy principle of
\cite{KabluchkoProchno}.

The following variant of Lemma~\ref{lem:convex-local-uniform} controls
the endpoints of the order interval.

\begin{lemma}[Uniform convergence up to the endpoints]
\label{lem:convex-closed-uniform}
Let $G_n$ be finite convex functions on a compact interval $[a,b]$ that
converge pointwise on $[a,b]$ to a function $G$ that is finite and
continuous on $[a,b]$.  Then $G_n\to G$ uniformly on $[a,b]$.
\end{lemma}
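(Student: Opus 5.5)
The plan is to reduce to uniform convergence on compact subintervals of the interior, which is Lemma~\ref{lem:convex-local-uniform} (the one-dimensional case of Rockafellar's theorem), and then to control neighborhoods of the two endpoints $a$ and $b$ separately. By symmetry I will treat only $a$. Fix $\varepsilon>0$.

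\emph{Upper bound near $a$.} Continuity of $G$ at $a$ and $b$ gives $\delta>0$ such that $|G(x)-G(a)|<\varepsilon$ for $x\in[a,a+\delta]$. For $x=(1-t)a+t(a+\delta)$ with $t\in[0,1]$, convexity gives
\[
G_n(x)\le(1-t)G_n(a)+tG_n(a+\delta).
\]
Pointwise convergence at the two fixed points $a$ and $a+\delta$ makes the right side at most $(1-t)G(a)+tG(a+\delta)+\varepsilon\le G(a)+2\varepsilon$ for all large $n$. This is at most $G(x)+3\varepsilon$.

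\emph{Lower bound near $a$.} This is the main obstacle, since convexity only controls $G_n$ from above between known points. Here I extrapolate from the interior. Take the fixed points $y=a+\delta$ and $z=a+2\delta$, assumed to lie in $[a,b]$ (shrink $\delta$ if necessary). For $x\in[a,y]$, the point $y$ lies between $x$ and $z$, so convexity gives
\[
G_n(y)\le\frac{z-y}{z-x}G_n(x)+\frac{y-x}{z-x}G_n(z).
\]
Solving for $G_n(x)$, with $s=(y-x)/(z-x)\in[0,1/2]$, yields
\[
G_n(x)\ge\frac{G_n(y)-sG_n(z)}{1-s}.
\]
As $n\to\infty$, the right side converges uniformly in $s\in[0,1/2]$ to $(G(y)-sG(z))/(1-s)$, because $1/(1-s)\le2$ and the right side is an affine combination of two values converging at fixed points. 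By choice of $\delta$, the values $G(y)$ and $G(z)$ lie within $\varepsilon$ of $G(a)$. Hence the limit expression is at least $G(a)-3\varepsilon$, which is at least $G(x)-4\varepsilon$. So $G_n(x)\ge G(x)-5\varepsilon$ on $[a,a+\delta]$ for large $n$.

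\emph{Conclusion.} The same argument at $b$, combined with uniform convergence on the compact interior interval $[a+\delta,b-\delta]$ from Lemma~\ref{lem:convex-local-uniform}, gives $\sup_{[a,b]}|G_n-G|\le5\varepsilon$ for all large $n$. Since $\varepsilon>0$ was arbitrary, $G_n\to G$ uniformly on $[a,b]$.

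Alternatively, one could apply the same three-point argument directly on all of $[a,b]$ with a finite grid of mesh $\delta$. This would avoid citing Rockafellar, but it is the same idea.
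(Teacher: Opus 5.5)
Your proof is correct and follows the paper's route: interior uniform convergence from Lemma~\ref{lem:convex-local-uniform}, the chord from $a$ to $a+\delta$ for the upper bound, and convex extrapolation from points in $[a+\delta,a+2\delta]$ for the lower bound. The paper instead reflects $x$ through $a+\delta$ and so needs uniform convergence on $[a+\delta,a+2\delta]$, while your fixed points $y,z$ with variable weight $s$ need only pointwise convergence at those two points. One bookkeeping fix is needed: since $z=a+2\delta\notin[a,a+\delta]$, choose $\delta$ so that $|G(x)-G(a)|<\varepsilon$ on $[a,a+2\delta]$, as the paper does, before asserting that $G(z)$ lies within $\varepsilon$ of $G(a)$.
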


\begin{proof}
By Lemma~\ref{lem:convex-local-uniform}, the convergence is uniform on
every compact subinterval of $(a,b)$.  Fix $\varepsilon>0$ and choose
$0<\delta<(b-a)/2$ with $|G(x)-G(a)|\le\varepsilon$ for
$x\in[a,a+2\delta]$.  For all sufficiently large $n$, $|G_n-G|\le\varepsilon$
at the points $a$ and $a+\delta$ and on the interval
$[a+\delta,a+2\delta]$.  Let $x\in[a,a+\delta]$.  Convexity gives the
upper bound
\[
G_n(x)\le\max\{G_n(a),G_n(a+\delta)\}\le G(a)+2\varepsilon\le G(x)+3\varepsilon.
\]
For the lower bound, $a+\delta$ is the midpoint of $x$ and
$y=2(a+\delta)-x\in[a+\delta,a+2\delta]$, so convexity gives
\[
G_n(x)\ge2G_n(a+\delta)-G_n(y)\ge2G(a+\delta)-G(y)-3\varepsilon
\ge G(a)-6\varepsilon\ge G(x)-7\varepsilon.
\]
Thus $|G_n-G|\le7\varepsilon$ on $[a,a+\delta]$ for all large $n$.  The
same argument applies at $b$, and $[a+\delta,b-\delta]$ is covered by
the local uniform convergence.
\end{proof}

\begin{proof}[Proof of Corollary~\ref{cor:profile}]
(i) Let $f_n(c)=\Vol(K_n(c))$.  By Lemma~\ref{lem:profile} with $j=n$,
$f_n$ is positive, nondecreasing, log-concave, and of polynomial growth.
Tonelli gives, for $\lambda>0$,
\[
Q_n(\lambda)=\int_0^\infty e^{-\lambda c}\Vol(K_n(c))\dd c
=\int_{\R^n}\int_{F_n(x)}^\infty e^{-\lambda c}\dd c\dd x
=\frac1\lambda\int_{\R^n}e^{-\lambda F_n(x)}\dd x
=\frac{Z(0,\lambda)^n}{\lambda},
\]
so $\Lambda_n(\lambda)=\log Z(0,\lambda)-n^{-1}\log\lambda$ converges to
$\log Z(0,\lambda)$, which is finite by Lemma~\ref{lem:endpoints}.
Monotone convergence gives $Z(0,\lambda)\uparrow\infty$ as
$\lambda\downarrow0$, and restricting the integral to $|s|\le\varepsilon$
gives
$\lambda\rho+\log Z(0,\lambda)\ge\lambda(\rho-M(\varepsilon))+\log(2\varepsilon)$,
which tends to $\infty$ as $\lambda\to\infty$ once $M(\varepsilon)<\rho$.
Thus $\lambda\rho+\log Z(0,\lambda)$ is coercive, and
Proposition~\ref{prop:inversion} yields the limit.  The bounds in the
proof of Lemma~\ref{lem:endpoints} hold for $\theta=0$, so, as in the
proof of Lemma~\ref{lem:coercive},
$\partial_\lambda\log Z(0,\lambda)=-\E_{\mu_{Z,0,\lambda}}[M]$ and
$\partial_\lambda^2\log Z(0,\lambda)=\Var_{\mu_{Z,0,\lambda}}(M)>0$, which
give uniqueness and the characterization of the minimizer.

(ii) The sequence $(V_r(K))_{r=0}^n$ of intrinsic volumes of a convex body
$K\subseteq\R^n$ is log-concave, that is,
$V_r(K)^2\ge V_{r-1}(K)V_{r+1}(K)$ for $1\le r\le n-1$, a consequence of
the Alexandrov--Fenchel inequality due to McMullen~\cite{McMullen}, see
also \cite[Remark~2.20]{KabluchkoMarynychProchno}.  Hence, for each $n$, the sequence
$j\mapsto n^{-1}\log V_j(K_n(n\rho))$, $j=0,\ldots,n$, has nonincreasing
increments, and its piecewise linear interpolation $\zeta_n$ on $[0,1]$,
with nodes at $j/n$, is a finite concave function.

We claim that $\zeta_n\to\rate(\cdot,\rho)$ pointwise on $[0,1]$.  At the
endpoints, $\zeta_n(0)=n^{-1}\log V_0(K_n(n\rho))=0$ for every $n$, and
$\zeta_n(1)=n^{-1}\log\Vol(K_n(n\rho))\to\rate(1,\rho)$ by part~(i).  Fix
$x\in(0,1)$ and put $j_n=\lfloor nx\rfloor$.  For all sufficiently large
$n$, both $j_n$ and $j_n+1$ lie in $\{0,\ldots,n-1\}$, and $j_n/n\to x$,
$(j_n+1)/n\to x$.  By Theorem~\ref{thm:main}, the values of $\zeta_n$ at
these two nodes converge to $\rate(x,\rho)$, and $\zeta_n(x)$, a convex
combination of them, converges to the same limit.

Passing to the limit in the concavity inequality for $\zeta_n$ shows that
the extended profile is concave on $[0,1]$, and the chord between the
nodes $0$ and $1$ gives $\rate(\alpha,\rho)\ge\alpha\,\rate(1,\rho)$.  Hence
$\liminf_{\alpha\uparrow1}\rate(\alpha,\rho)\ge\rate(1,\rho)$ and
$\liminf_{\alpha\downarrow0}\rate(\alpha,\rho)\ge0$.  For the reverse
inequalities, Lemma~\ref{lem:endpoints} gives
$\sqrt\theta\,B(\theta,\lambda)\le\sqrt\pi$ and
$Z(\theta,\lambda)\le Z(0,\lambda)$, so
$\PhiA(\lambda)\le\alpha\log Z(0,\lambda)+\frac{1-\alpha}{2}\log\pi$ and,
for every fixed $\lambda_0>0$,
\[
\rate(\alpha,\rho)\le H(\alpha)+\lambda_0\rho+\alpha\log Z(0,\lambda_0).
\]
Letting $\alpha\uparrow1$ and then taking the infimum over $\lambda_0$
gives $\limsup_{\alpha\uparrow1}\rate(\alpha,\rho)\le\rate(1,\rho)$, and
letting $\alpha\downarrow0$ and then $\lambda_0\downarrow0$ gives
$\limsup_{\alpha\downarrow0}\rate(\alpha,\rho)\le0$.  Continuity on
$(0,1)$ follows from concavity, so the extended profile is continuous on
$[0,1]$.  The functions $-\zeta_n$ are convex on $[0,1]$ and converge
pointwise there to $-\rate(\cdot,\rho)$, so
Lemma~\ref{lem:convex-closed-uniform} makes the convergence uniform on
$[0,1]$.  Since $\zeta_n(j/n)=n^{-1}\log V_j(K_n(n\rho))$, this is
\eqref{eq:full-uniform}.  If $j_n/n\to\alpha$, then \eqref{eq:full-uniform}
and continuity of the profile give
$n^{-1}\log V_{j_n}(K_n(n\rho))-\rate(j_n/n,\rho)\to0$ and
$\rate(j_n/n,\rho)\to\rate(\alpha,\rho)$.  Bounded sequences are included
as a special case of $j_n=o(n)$, and
$j_n=\lfloor\alpha n\rfloor$ is the last assertion.
\end{proof}

We close this section with the duality between the radius profile and
the transform rate announced in the introduction.

\begin{remark}[Dependence on the radius]\label{rem:radius}
Fix $\alpha\in(0,1)$.  As an infimum of increasing affine functions of
$\rho$, the profile $\rho\mapsto\rate(\alpha,\rho)$ is concave,
nondecreasing, and continuous on $(0,\infty)$.  More precisely,
\[
\rate(\alpha,\rho)=\inf_{\lambda>0}
\{\lambda\rho+\Lambda(\lambda)\},
\qquad
\Lambda(\lambda)=\sup_{\rho>0}
\{\rate(\alpha,\rho)-\lambda\rho\}.
\]
The first identity is \eqref{eq:mainrho}.  For the reverse identity, the
first one gives
$\rate(\alpha,\rho)-\lambda\rho\le\Lambda(\lambda)$ for every $\rho>0$.
Conversely, fix $\lambda>0$ and put
$\rho_\lambda=-\Lambda'(\lambda)>0$.  Convexity makes $\lambda$ the
minimizer of $t\rho_\lambda+\Lambda(t)$, so equality holds at
$\rho=\rho_\lambda$.
By Lemmas~\ref{lem:coercive} and~\ref{lem:optimality}, the map
$-\Lambda'$ is continuous and strictly decreasing onto $(0,\infty)$,
and $-\Lambda'(\lambda_\rho)=\rho$.  Its inverse
$\rho\mapsto\lambda_\rho$ is therefore continuous.  Evaluating each of
the variational problems at the other's minimizer gives, for
$h\ne0$ with $\rho+h>0$,
\[
\lambda_{\rho+h}h
\le\rate(\alpha,\rho+h)-\rate(\alpha,\rho)
\le\lambda_\rho h.
\]
Dividing by $h$, reversing the inequalities when $h<0$, and letting
$h\to0$ yields $\partial_\rho\rate(\alpha,\rho)=\lambda_\rho$.
Thus $\lambda_\rho$ is the unique supergradient at $\rho$.

These identities have a thermodynamic reading, which we record only as
an interpretation, in the terminology of \cite[Section~5.2]{Touchette}.
Regard the radius $c$ as an energy and $\lambda$ as an inverse
temperature.  For $j=n$, Tonelli's theorem gives
$Q_{n,n}(\lambda)=\lambda^{-1}\int_{\R^n}e^{-\lambda F_n(x)}\dd x$, so
$c\mapsto\Vol(K_n(c))$ is the integrated density of states of the energy
$F_n$ and $Q_{n,n}$ is, up to the factor $\lambda^{-1}$, its canonical
partition function \cite[(146)]{Touchette}.  For $j<n$, the profile
$c\mapsto V_j(K_n(c))$ plays the same role.  Accordingly,
$\rho\mapsto\rate(\alpha,\rho)$ is the corresponding microcanonical
entropy $s(u)$ of \cite[Section~5.2.1]{Touchette}, with $u=\rho$, and
$\Lambda$ is the limiting log-partition function per coordinate, so that
$-\Lambda$ is the free energy function $\varphi(\beta)$ of
\cite[(145)]{Touchette} at $\beta=\lambda$, that is, $\lambda$ times the
thermodynamic free energy.  The two identities above are then, in this
order, the Legendre--Fenchel relations
\cite[Section~5.2.3, (149) and~(148)]{Touchette} between entropy and
free energy.  The first one expresses the equivalence of the
microcanonical and canonical ensembles at the thermodynamic level, which
holds when the entropy is concave \cite[Section~5.5]{Touchette}, as
$\rho\mapsto\rate(\alpha,\rho)$ is here.
\end{remark}

\section{Consistency with known cases}\label{sec:known-cases}

\emph{Quadratic case.}
For \(M(s)=s^2\),
\[
Z(\theta,\lambda)=\sqrt{\frac{\pi}{\lambda+4\theta}},
\qquad
B(\theta,\lambda)=2Z(\theta,\lambda).
\]
The inner stationarity equation gives
\(\theta_*(\lambda)=(1-\alpha)\lambda/(4\alpha)\), and the outer
minimum is attained at \(\lambda_\rho=\alpha/(2\rho)\).  Substitution in
\eqref{eq:mainrho} yields
\begin{equation}\label{eq:quadratic}
\rate(\alpha,\rho)
=H(\alpha)+\frac{\alpha}{2}\log(2\pi e\rho)
+\frac{1-\alpha}{2}\log(1-\alpha).
\end{equation}
This also follows independently from the exact identity
\[
V_j(RB_2^n)=R^j\binom nj\frac{\kappa_n}{\kappa_{n-j}}.
\]
Indeed, with \(R=\sqrt{n\rho}\), \(j=j_n\), and
\(\alpha_n=j_n/n\), Stirling's formula gives
\[
\frac1n\log V_{j_n}(\sqrt{n\rho}\,B_2^n)
=H(\alpha_n)+\frac{\alpha_n}{2}\log(2\pi e\rho)
+\frac{1-\alpha_n}{2}\log(1-\alpha_n)+o(1),
\]
which converges to \eqref{eq:quadratic}.  At the endpoint,
Corollary~\ref{cor:profile} gives
$\rate(1,\rho)=\inf_{\lambda>0}\{\lambda\rho+\tfrac12\log(\pi/\lambda)\}
=\tfrac12\log(2\pi e\rho)$, agreeing with the limit of
\eqref{eq:quadratic} as $\alpha\uparrow1$.  Equivalently,
\[
\lim_{n\to\infty}\Vol(\sqrt{n\rho}\,B_2^n)^{1/n}
=\sqrt{2\pi e\rho}.
\]

\emph{The \(\ell_p\) specialization.}
Let \(M(s)=|s|^p\), \(p>1\).  For \(\alpha\in(0,1)\) and \(u>0\), put
\begin{align*}
I_p(u)&=\int_{\R}e^{-|x|^p-u|x|^{2p-2}}\dd x,\\
J_p(u)&=\int_{\R}|x|^{p-2}e^{-|x|^p-u|x|^{2p-2}}\dd x,\\
K_p(u)&=\int_{\R}|x|^{2p-2}e^{-|x|^p-u|x|^{2p-2}}\dd x,\\
\Psi_{p,\alpha}(u)
&=\frac{1-\alpha}{2}\log u
+\alpha\log I_p(u)+(1-\alpha)\log J_p(u).
\end{align*}
The singularity in \(J_p\) is integrable also for \(1<p<2\).  The
notation \(I_p\), \(J_p\), \(K_p\), and \(\Psi_{p,\alpha}\) is that of
\cite{KabluchkoMarynychProchno} and is unrelated to the profile
\(\rate\), the integrals \(I_n(\lambda)\), the interval \(J\) of
Lemma~\ref{lem:endpoints}, and the bodies \(K_n(c)\).
With
\[
s=\lambda^{-1/p}x,
\qquad
u=p^2\lambda^{-(2p-2)/p}\theta,
\]
we have
\begin{align*}
Z(\theta,\lambda)&=\lambda^{-1/p}I_p(u),&
B(\theta,\lambda)&=p(p-1)\lambda^{-(p-1)/p}J_p(u),\\
A(\theta,\lambda)&=p^2\lambda^{-(2p-1)/p}K_p(u).
\end{align*}
Consequently,
\[
\PhiA(\lambda)
=-\frac{\alpha}{p}\log\lambda
+(1-\alpha)\log(p-1)
+\sup_{u>0}\Psi_{p,\alpha}(u).
\]
The outer minimum is attained at
\(\lambda_\rho=\alpha/(p\rho)\).  Hence, writing
\(g_p(\alpha)=\rate(\alpha,1)\),
\begin{equation}\label{eq:gp}
\begin{aligned}
g_p(\alpha)
={}&H(\alpha)
+\frac{\alpha}{p}\left(1-\log\frac{\alpha}{p}\right)
+(1-\alpha)\log(p-1)\\
&-\frac{1-\alpha}{2}\log\pi
+\sup_{u>0}\Psi_{p,\alpha}(u),
\end{aligned}
\end{equation}
and
\[
\rate(\alpha,\rho)=g_p(\alpha)+\frac{\alpha}{p}\log\rho.
\]
Here \(\Psi_{p,\alpha}\) is the phase function of
\cite[Proposition~2.12]{KabluchkoMarynychProchno}, and for
\(\alpha\in(0,1)\) the right-hand side of \eqref{eq:gp} is the exponential
profile \(g_p(\alpha)\) of the normalized ball \(n^{1/p}B_p^n=K_n(n)\) in
\cite[Theorem~2.17]{KabluchkoMarynychProchno}.  At \(\alpha=0\) and
\(\alpha=1\) both profiles are the limits of the same sequences
\(n^{-1}\log V_0(K_n(n))=0\) and \(n^{-1}\log\Vol(K_n(n))\), by
Corollary~\ref{cor:profile} on our side, so the two profiles coincide on
all of \([0,1]\), and for \(p=2\) the formula reduces to
\eqref{eq:quadratic}.  Homogeneity gives, for every sequence \(j_n\),
\[
\frac1n\log V_{j_n}(B_p^n)
=\frac1n\log V_{j_n}(K_n(n))
-\frac{j_n}{pn}\log n.
\]
When \(j_n/n\to\alpha>0\), the correction diverges to \(-\infty\), which
explains the normalization \(n^{1/p}B_p^n\).
The same substitution in
Proposition~\ref{prop:product} recovers
\cite[Theorem~2.1]{KabluchkoMarynychProchno}.  Indeed, homogeneity gives
\[
Q_{n,j}(\lambda)
=\Gamma\Bigl(1+\frac jp\Bigr)\lambda^{-1-j/p}V_j(B_p^n),
\]
while the transformed right-hand side of \eqref{eq:product} has the same
factor \(\lambda^{-1-j/p}\).  Canceling it gives
\[
V_j(B_p^n)
=\frac{n\binom{n-1}{j}p(p-1)^{n-1-j}}
{2\pi^{(n-j)/2}\Gamma(1+j/p)}
\int_0^\infty u^{(n-j)/2-1}
 I_p(u)^jJ_p(u)^{n-1-j}K_p(u)\dd u,
\]
which is exactly \cite[Theorem~2.1]{KabluchkoMarynychProchno}.

\section{Concluding remarks}\label{sec-open}

\emph{Generalized Orlicz balls.}
For coordinate-dependent functions $M_1,\ldots,M_n$ as in
\cite{JohnstonProchno}, the curvature, coarea, and Gamma steps of
Section~\ref{sec:factorization} go through, and the radius Laplace
transform of $V_j$ is again an integral in $\theta$ of a sum over
coordinate subsets of products of one-dimensional integrals.  The sum no
longer collapses to a binomial coefficient, and identifying its
exponential rate is open.

\emph{Curvature measures.}
Remark~\ref{rem:optimality} identifies the mixture
$\alpha\mu_{Z,\theta,\lambda}+(1-\alpha)\mu_{B,\theta,\lambda}$ at the
optimal parameters as the natural candidate for the limit law of one
coordinate under the normalized curvature measure
$\mathcal C_{j_n}(K_n(n\rho),\cdot)/V_{j_n}(K_n(n\rho))$, as in
\cite[Theorem~3.1]{KabluchkoMarynychProchno} for $\ell_p$ balls, and
Proposition~\ref{prop:product} is the natural starting point for such a
Maxwell--Poincar\'e--Borel law.

\emph{Regularity and symmetry of $M$.}
Remark~\ref{rem-assumptions} isolates where \textnormal{(M2)} is used,
and Section~\ref{sec:defining} explains how evenness can be replaced by
the requirement that $0$ minimizes $M$.  Extending the curvature-measure
step to functions with locally absolutely continuous derivative would
remove the $C^2$ requirement.

\emph{Rates of convergence.}
Proposition~\ref{prop:inversion} compares
$n^{-1}\log V_{j_n}(K_n(n\rho))$ with the minimum of the finite-$n$
transform objective up to $O(\log n/n)$.  A rate of convergence to the
limiting profile would in addition require a quantitative version of
Proposition~\ref{prop:transform-rate}, that is, a Laplace-method
expansion of the $\theta$-integral in \eqref{eq:product} with errors
uniform in $\lambda$ on compact sets, which we have not attempted.

\section*{Declaration of AI use}

The author used the large language model assistants ChatGPT (OpenAI) and
Claude (Anthropic) in conducting this research and preparing the
manuscript.  They were used to suggest proof ideas and derivations, to
check the arguments for errors and gaps, and to draft and revise parts
of the text.  The author checked all results and arguments and takes
full responsibility for the content of the paper.

\end{document}